\documentclass[10pt,psamsfonts]{amsart}
\usepackage[margin=0.80in,bottom=0.90in,top=0.85in]{geometry}
\usepackage{eucal}
\usepackage{amsmath,amssymb,amsfonts,amsthm,mathrsfs}
\usepackage{graphicx, array, xcolor}
\usepackage{float}
\usepackage{xcolor}
\usepackage[foot]{amsaddr}
\usepackage{enumitem}
\usepackage{multicol}
\definecolor{mytealblue}{RGB}{0, 128, 128}
\colorlet{BLUE}{blue}

\usepackage[
    colorlinks=true,
    citecolor=mytealblue,
    linkcolor=mytealblue,
    urlcolor=mytealblue
]{hyperref}

\newcommand{\N}{[N]}

\newcommand{\R}{\mathbb{R}}
\newcommand{\I}{[N]}

\usepackage[mathlines]{lineno}

\newtheorem{theorem}{Theorem}[section]
\newtheorem{lemma}{Lemma}[section]
\theoremstyle{corollary}

\newtheorem{proposition}{Proposition}[section]
\theoremstyle{definition}

\newtheorem{remark}{Remark}[section]
\numberwithin{equation}{section}

\def\longdelete#1{}

\subjclass[2020]{35K57, 35C07, 35B40}

\begin{document}

\title[Propagation windows in plant-consumer systems]
{Propagation-Window Bounds in Degenerate Plant-Consumer Reaction-Diffusion Systems}

\author[Chen, C-C]{Chiun-Chuan Chen$^{1,2}$}
\address{$^1$ Department of Mathematics, National Taiwan University, Taiwan.}
\address{$^2$ National Center for Theoretical Science, Taipei, Taiwan.}
\email[Chen, C-C]{$^{1,2}$chchchen@math.ntu.edu.tw}

\author[Hsiao, T.-Y.]{Ting-Yang Hsiao$^3$}
\address{$^3$ International School for Advanced Studies (SISSA), Trieste, Italy (Corresponding author).}
\email[Hsiao, T.-Y.]{$^3$thsiao@sissa.it}

\author[Hung, L.-C.]{Li-Chang Hung$^4$}
\address{$^4$ Department of Mathematics, Soochow University, Taipei, Taiwan.}
\email[Hung, L.-C.]{$^4$lichang.hung@gmail.com}

\author[Li, H]{Haoyuan Li$^5$}
\address{$^5$ Department of Mathematics, University of Illinois Urbana--Champaign, USA.}
\email[Li, H]{$^5$hl56@illinois.edu}

\author[Wang, S-C]{Shun-Chieh Wang$^6$}
\address{$^6$ National Center for Theoretical Science, Taipei, Taiwan.}
\email[Wang, S-C]{$^6$rjaywang1130@ncts.ntu.edu.tw}

\keywords{Traveling waves;
critical wave speed;
reaction--diffusion systems;
plant--consumer models;
degenerate consumer dynamics;
upper-speed obstruction.}

\date{\today}

\begin{abstract}
We study traveling waves in high-dimensional plant--consumer reaction--diffusion systems with $N$ competing plants and $N$ associated consumers. We focus on the degenerate case in which consumer growth vanishes when plant populations are zero, so the standard Fisher--KPP linearization does not determine the leading-edge behavior. Under a weak-interaction condition, we identify a plant-driven lower threshold $s_{\mathrm P}$, a constructive upper threshold $s_{\mathrm C}^{\mathrm{ex}}$, and a universal necessary upper threshold $s_{\mathrm C}^{\mathrm{nec}}$, and prove $(s_{\mathrm P},s_{\mathrm C}^{\mathrm{ex}})
\subseteq \mathcal S
\subseteq
[s_{\mathrm P},s_{\mathrm C}^{\mathrm{nec}})$, where $\mathcal S$ is the set of speeds admitting positive extinction-to-coexistence waves. Existence follows from new lower solutions that remove previously imposed diffusion and compatibility restrictions. Nonexistence below $s_{\mathrm P}$ follows from a Sturm argument, while center-manifold analysis and a Riccati crossing argument yield the finite upper-speed obstruction. An explicit two-species wave beyond $s_{\mathrm C}^{\mathrm{ex}}$ shows that this constructive threshold is not the true maximal speed.
\end{abstract}

\maketitle

\tableofcontents

\section{Introduction and Main Results}
\label{sec 1}
Savannas and related plant--consumer ecosystems are spatially extended
systems shaped by the interaction of vegetation competition, dispersal,
herbivory, and environmental disturbance.  The balance between woody and
herbaceous vegetation is not determined by plant competition alone:
grazers and browsers respond to vegetation availability, while their
consumption feeds back into plant persistence and coexistence.  Herbivory
may also interact with fire through indirect feedbacks involving grass
biomass, fuel load, and woody vegetation
\cite{van2003effects,HoldoHoltFryxell2013}.  For a broader discussion of
spatial savanna models, we refer to \cite{sterl2026review}.

A basic spatial question is whether consumers can remain attached to an
advancing vegetation front.  Dispersal plays an important role in the
organization and resilience of savanna and forest distributions
\cite{goel2020dispersal}, and reaction--diffusion models have been used to
study spreading speeds and traveling waves in tree--grass systems
\cite{banasiak2023spreading,fazly2020analysis}.  Closely related
consumer--resource models arising from primary succession suggest that
herbivores may slow, arrest, or even reverse plant invasion
\cite{fagan2000trophic,fagan2005can}.  These observations lead naturally
to the following question:

\begin{quote}
For which propagation speeds can several competing plant populations and
their resource-dependent consumers jointly invade an initially depleted
region?
\end{quote}

\textcolor{black}{The present work is motivated by the four-species savanna model studied in
\cite{chen2025savanna}, but the passage to the present system is not merely
a dimensional extension.  In that work, the existence proof required
specific inequalities comparing the diffusion coefficients of the plants
and consumers, an additional diffusion-dependent restriction on the wave
speed, and several technical conditions introduced by the particular lower
solutions used in the construction.  As a result, existence was obtained
only in a parameter region constrained by the proof itself, rather than on
a speed interval determined solely by the coefficients of the original
system.}

\textcolor{black}{These restrictions reflect a genuine difficulty near the extinction state.
A plant lower profile has to play two different roles.  Near the leading
edge it must be very small and follow the correct exponential behavior.
Farther behind the front it must reach a positive level large enough to
sustain the corresponding consumer, whose reaction term has no linear
growth at extinction.  Forcing a single simple profile to satisfy both
requirements ties together its shape, the diffusion coefficients, and the
wave speed, and this is what produced the extra assumptions in the earlier
construction.}

\textcolor{black}{The present paper separates these two roles.  For each plant we use an
exponentially small profile near the leading edge and then join it to a
slowly increasing logistic profile whose eventual height can be chosen
independently.  For each consumer we use a Gaussian profile near the
leading edge and a small positive constant farther behind the front.  In
this way the consumer is required to become appreciable only after enough
plant biomass has developed.  This new lower-solution construction removes
the restrictions created by the previous proof: no special comparison
between plant and consumer diffusion coefficients is needed, no additional
diffusion-dependent speed bound is imposed, and no auxiliary matching
conditions remain.  Apart from the structural weak-interaction condition
and the explicit nonemptiness of the speed interval, the construction works
simultaneously for all components and for every speed in that interval.}

We formulate this mechanism for $N$ competing plant species and $N$
associated consumer populations:
\begin{equation}
\label{eq:pc-system}
\left\{
\begin{aligned}
\partial_t u_i
&=
d_i\partial_{xx}u_i
+r_i u_i
\left(
1-u_i-\kappa_{ii}v_i
-\sum_{\substack{j\in\N,\,j\neq i}}
\kappa_{ij}u_j
\right),
\\
\partial_t v_i
&=
D_i\partial_{xx}v_i
+e_i v_i(u_i-v_i),
\end{aligned}
\right.
\qquad i\in\N:=\{1,\ldots,N\}.
\end{equation}
Here $u_i=u_i(x,t)$ is the density of the $i$-th plant population,
and $v_i=v_i(x,t)$ is the density of its corresponding consumer.
The diffusion and growth parameters
$d_i$, $D_i$, $r_i$, $e_i$ are strictly positive.  The coefficient
$\kappa_{ii}>0$ measures the consumption pressure exerted by the
$i$-th consumer on its plant, whereas $\kappa_{ij}\geq0$, $j\neq i$,
measures the competitive effect of the $j$-th plant on the $i$-th
plant.

Writing
\begin{align*}
K=(\kappa_{ij})_{1\leq i,j\leq N},
\end{align*}
we impose the weak-interaction condition
\begin{equation}
\label{eq:weak-interaction}
\Theta
:=
\|K\|_\infty:=\max_{i\in\N}\sum_{j\in\N}\kappa_{ij}
<1.
\end{equation}
Under \eqref{eq:weak-interaction}, system \eqref{eq:pc-system} possesses
a unique strictly positive spatially homogeneous coexistence equilibrium
\begin{align*}
E^*=(u^*,v^*),
\qquad
v^*=u^*=(I+K)^{-1}\mathbf 1.
\end{align*}
Its existence and uniqueness are proved in Appendix~\ref{CE}.

\medskip

\textcolor{black}{The central analytical difficulty in \eqref{eq:pc-system} is the
degeneracy of the consumer dynamics at joint extinction.}  Indeed,
\begin{align*}
e_i v_i(u_i-v_i)=e_i u_i v_i-e_i v_i^2
\end{align*}
has no linear part at $(u_i,v_i)=(0,0)$.
\textcolor{black}{Thus, while the plants possess intrinsic low-density growth, the consumers
have no linear growth mechanism at the leading edge and can increase only
through their nonlinear coupling to the plants.  In the traveling-wave
ordinary differential equation this produces zero eigenvalues in the
consumer directions, so the extinction state is not hyperbolic there.
Consequently, the usual linear leading-edge analysis does not by itself
provide either the decay behavior of the consumer profiles or a standard
KPP-type lower-solution construction.}

\textcolor{black}{This degeneracy also produces a propagation mechanism fundamentally
different from the familiar Fisher--KPP picture.  In a scalar monostable
equation, admissible wave speeds typically form an unbounded half-line
above a minimal speed \cite{fisher1937wave,AronsonWeinberger1978}.  Here
the plant equations still impose a lower threshold, but the consumers,
which have no intrinsic linear growth near extinction, cannot remain
attached to a front moving arbitrarily fast.  They therefore impose a
finite upper obstruction.  The admissible speeds are consequently trapped
between a plant-driven lower threshold and a consumer-driven upper
threshold, giving a bounded propagation window.}

We seek traveling waves of the form
\begin{align*}
(u,v)(x,t)=(U,V)(\xi),
\qquad
\xi=x+st,
\end{align*}
where $s\in\mathbb{R}$.  The profiles solve
\begin{equation}
\label{eq:traveling-wave-system}
\left\{
\begin{aligned}
d_iU_i''-sU_i'
&+
r_iU_i
\left(
1-U_i-\kappa_{ii}V_i
-\sum_{\substack{j\in\N,\,j\neq i}}
\kappa_{ij}U_j
\right)
=0,
\\
D_iV_i''-sV_i'
&+
e_iV_i(U_i-V_i)
=0,
\end{aligned}
\right.
\qquad i\in\N,
\end{equation}
and we consider strictly positive waves connecting extinction to
coexistence:
\begin{equation}
\label{eq:wave-boundary-conditions}
\lim_{\xi\to-\infty}(U,V)(\xi)=(0,0),
\qquad
\lim_{\xi\to+\infty}(U,V)(\xi)=E^*.
\end{equation}

For each $i\in\N$, set
\begin{align*}
\sigma_i
:=
1-\sum_{j\in\N}\kappa_{ij}>0.
\end{align*}
Three characteristic speeds govern the propagation problem:
\begin{equation}
\label{eq:speed-thresholds}
s_{\mathrm P}
:=
\max_{i\in\N}2\sqrt{d_ir_i},
\qquad
s_{\mathrm C}^{\mathrm{ex}}
:=
\min_{i\in\N}2\sqrt{D_ie_i\sigma_i},
\qquad
s_{\mathrm C}^{\mathrm{nec}}
:=
\min_{i\in\N}2\sqrt{D_ie_i}.
\end{equation}
The first speed is determined by the linearized plant equations at the
extinction state.  The second arises from the constructive
upper--lower solution argument.  The third is a universal necessary
upper bound imposed by the degenerate consumer equations.

To formulate the main conclusion, define the admissible-speed set
\begin{align*}
\mathcal S
:=
\left\{
s\in\mathbb{R}:
\text{\eqref{eq:traveling-wave-system} admits a strictly positive
solution satisfying \eqref{eq:wave-boundary-conditions}}
\right\}.
\end{align*}
Our results establish the propagation-window bounds
\begin{equation}
\label{eq:master-window}
\boxed{
(s_{\mathrm P},s_{\mathrm C}^{\mathrm{ex}})
\subseteq
\mathcal S
\subseteq
[s_{\mathrm P},s_{\mathrm C}^{\mathrm{nec}})
}.
\end{equation}
Moreover, for an explicit two-species system,
\begin{equation}
\label{eq:intermediate-nonempty}
\mathcal S\cap
(s_{\mathrm C}^{\mathrm{ex}},s_{\mathrm C}^{\mathrm{nec}})
\neq\varnothing.
\end{equation}
Thus $s_{\mathrm C}^{\mathrm{ex}}$ is a sufficient threshold produced
by the construction, but it is not an intrinsic maximal speed of the
system.

We emphasize that the term \emph{propagation window} refers to the
rigorous inner and outer bounds in \eqref{eq:master-window}.  We do not
claim that $\mathcal S$ is necessarily an interval, nor do we determine
its complete structure inside $[s_{\mathrm C}^{\mathrm{ex}},s_{\mathrm C}^{\mathrm{nec}})$.

Our first result gives the inner existence bound.

\begin{theorem}[Existence inside the constructive window]
\label{thm:existence}
Assume that
\begin{equation}
\label{eq:nonempty-speed-window}
\Theta=\|K\|_\infty<1,
\qquad
s_{\mathrm P}<s_{\mathrm C}^{\mathrm{ex}}.
\end{equation}
Then, for every
\begin{align*}
s\in(s_{\mathrm P},s_{\mathrm C}^{\mathrm{ex}}),
\end{align*}
system~\eqref{eq:traveling-wave-system} admits a strictly positive
solution
\begin{align*}
(U,V)\in C^\infty(\mathbb R,\mathbb R^{2N})
\end{align*}
satisfying \eqref{eq:wave-boundary-conditions}.
\end{theorem}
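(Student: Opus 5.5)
The plan is to use the upper--lower solution method for monotone traveling-wave systems (in the spirit of Wu--Zou / Ma), applied after a change of variables that makes the system quasi-monotone.

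\textbf{Step 1: monotonicity.} In \eqref{eq:pc-system} the plant $u_i$ is inhibited by $v_i$ and by $u_j$, while $v_i$ is promoted by $u_i$. So the system is not cooperative in the original variables. I would fix a speed $s\in(s_{\mathrm P},s_{\mathrm C}^{\mathrm{ex}})$ and define the profile set
\[
\Gamma=\{(U,V):\underline U\le U\le \overline U,\ \underline V\le V\le \overline V\},
\]
ordered componentwise. On $\Gamma$ I would write each reaction term as $\beta\phi+(\text{monotone remainder})$, and treat the inhibitory couplings through mixed quasi-monotonicity. Concretely, the operator for $U_i$ is evaluated at the \emph{upper} solution of the inhibitors when checking the lower inequality, and vice versa. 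I would then define the fixed-point map
\[
P_i[\phi](\xi)=\frac1{d_i(\lambda_2-\lambda_1)}\left(\int_{-\infty}^{\xi}e^{\lambda_1(\xi-y)}H_i(y)\,dy+\int_\xi^\infty e^{\lambda_2(\xi-y)}H_i(y)\,dy\right),
\qquad H_i=\beta\phi_i+F_i(\phi).
\]
This map sends $\Gamma$ into itself and is compact in the exponentially weighted norm $\sup|\phi(\xi)|e^{-\mu|\xi|}$. Schauder's theorem then gives a fixed point, which is a solution of \eqref{eq:traveling-wave-system}.

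\textbf{Step 2: upper solutions.} For the plants I would take
\[
\overline U_i=\min\{e^{\lambda_i\xi},1\},
\]
where $\lambda_i$ is the smaller root of $d_i\lambda^2-s\lambda+r_i=0$; this root is real because $s>2\sqrt{d_ir_i}$. For the consumers I would take $\overline V_i=\min\{Me^{\mu\xi},1\}$, or simply $\overline V_i\equiv 1$, since $v_i'\le e_iv_i(1-v_i)$ whenever $u_i\le 1$.

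\textbf{Step 3: lower solutions.} This follows the strategy announced in the introduction.

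For the plants I would take
\[
\underline U_i=\max\{e^{\lambda_i\xi}(1-qe^{\eta\xi}),0\}
\]
for very negative $\xi$, glued to a slowly increasing logistic profile with small height $\delta_i$ further right. In the left piece the quadratic terms are dominated by choosing $q$ large and $\eta$ small. In the right piece the lower inequality needs
\[
1-\underline U_i-\kappa_{ii}\overline V_i-\sum_{j\ne i}\kappa_{ij}\overline U_j>0,
\]
and here the upper bounds on the inhibitors must not be $1$. So I would iterate: first use a weaker upper solution $\overline V_i$ that is small where $\underline U_i$ is switched on, which forces the consumer's upper bound to be localized as well.

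For the consumers I would take
\[
\underline V_i=\varepsilon\exp\!\big(-\alpha(\xi-\xi_0)^2\big)
\]
for $\xi\le\xi_0$, and $\varepsilon$ for $\xi\ge\xi_0$. The required inequality is
\[
D_i\underline V_i''-s\underline V_i'+e_i\underline V_i(\underline U_i-\underline V_i)\ge0.
\]
On the Gaussian tail, $\underline V_i''$ and $-s\underline V_i'$ have definite signs relative to $\underline V_i$. The positivity then reduces to a quadratic condition in $\alpha(\xi_0-\xi)$. This condition can be met once $s^2<4D_ie_i\sigma_i$, using that the plant floor behind the front is at least about $\sigma_i$, since $1-\sum_j\kappa_{ij}$ bounds $U_i$ from below when all competitors sit at their upper bound $1$.

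This is where $s_{\mathrm C}^{\mathrm{ex}}$ enters. I expect the precise matching, namely making the Gaussian live where $\underline U_i$ is already of order $\sigma_i$ while keeping corners convex (the max/min kinks having the right sign), to be the main obstacle.

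\textbf{Step 4: limits.} Strict positivity follows from the lower solutions and the strong maximum principle. The limit $(0,0)$ at $-\infty$ follows from the upper solutions.

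The limit $E^*$ at $+\infty$ is the delicate part. I would first show
\[
\liminf_{\xi\to+\infty}(U,V)\ge(\delta,\varepsilon)>0,
\]
then use the weak interaction $\Theta<1$ to run a contracting-rectangle argument, iterating $\limsup/\liminf$ bounds in the spirit of Lyapunov-type estimates. Since $\|K\|_\infty<1$ makes the iteration a contraction toward $u^*=v^*=(I+K)^{-1}\mathbf 1$ from Appendix~\ref{CE}, this yields convergence to $E^*$. Smoothness follows from elliptic bootstrapping.
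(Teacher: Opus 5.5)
Your architecture is the paper's: a mixed-quasimonotone upper--lower pair fed into the Schauder/Green-operator framework, exponential upper barriers capped at $1$, a plant bump $e^{\lambda_i^-\xi}-qe^{(\lambda_i^-+\eta)\xi}$ glued with an upward kink to a slowly rising logistic, a Gaussian-plus-constant consumer profile, and a shrinking box at $+\infty$. However, Step~3 contains a real error, exactly at the point you call the main obstacle.

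\textbf{The false premise in Step~3.} You assert that on the logistic piece the inhibitor upper bounds must not be $1$, and you propose an iteration with a consumer upper bound that is small there. This is false. $\Theta<1$ bounds the full row sum including $\kappa_{ii}$. So with $\overline V_i\le1$ and $\overline U_j\le1$ one has $1-\underline U_i-\kappa_{ii}\overline V_i-\sum_{j\ne i}\kappa_{ij}\overline U_j\ge\sigma_i-\underline U_i$. It therefore suffices to cap the logistic at a height $\delta_i^\infty<\sigma_i$ and to take its rate $\gamma_i$ small, namely $s\gamma_i\delta_i^\infty+d_i\gamma_i^2(\delta_i^\infty)^2<r_i(\sigma_i-\delta_i^\infty)$, together with $\gamma_i\delta_i^\infty<\lambda_i^-$ for the ordering $\underline U_i\le\overline U_i$.

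\textbf{The iteration cannot be realized.} With your $\overline U_i$, suppose $\overline U_i=1$ and $\underline V_i\ge\varepsilon$ on a half-line, and $\overline V_i\le c<1$ there. Then $D_i\overline V_i''-s\overline V_i'\le-e_i\varepsilon(1-c)<0$, which is impossible for a bounded positive function: integrating $(e^{-s\xi/D_i}\overline V_i')'$ forces $\overline V_i'\ge e_i\varepsilon(1-c)/s$.

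\textbf{The correct plant level and how the paper uses it.} The consumer step does not need a plant floor ``about $\sigma_i$''. It needs a level strictly between $\theta_i=s^2/(4D_ie_i)$ and $\sigma_i$, which exists precisely because $s<s_{\mathrm C}^{\mathrm{ex}}$. The paper fixes $\theta_i<\delta_i^0<\delta_i^\infty<\sigma_i$ and lets $b_i$ be the point where the bridge reaches $\delta_i^0$. It centers the Gaussian at $\xi_0=b_i+L$, with $L$ so large that for $\xi<b_i$ the quantity $4D_i\alpha^2y^2+2s\alpha y-2D_i\alpha-e_i\varepsilon$ (where $y=\xi-\xi_0$) is positive with no help from $\underline U_i$. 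On $[b_i,\xi_0)$, completing the square gives a bracket at least $e_i(\delta_i^0-\varepsilon)-s^2/(4D_i)-2D_i\alpha>0$ once $\alpha,\varepsilon$ are small. This is the matching you left open.

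\textbf{A smaller error in Step~2.} $\overline V_i\equiv1$ is not an admissible alternative. In the left bump, the term $r_i\kappa_{ii}\underline U_i\overline V_i$ would be of order $e^{\lambda_i^-\xi}$. That swamps the only available surplus $A_ic_ie^{(\lambda_i^-+\eta)\xi}$, which is of higher order as $\xi\to-\infty$. You would also lose $V\to0$ at $-\infty$ from the squeeze.

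You need a decaying cap $\overline V_i=\min\{e^{\nu_i\xi},1\}$ with $\nu_i\le\min\{\lambda_i^-,s/D_i\}$. Your ``$\eta$ small'' must then concretely mean $\eta<\min\{\lambda_i^-,\ \lambda_i^+-\lambda_i^-,\ \nu_i,\ \lambda_j^-\}$, where $j$ ranges over indices with $\kappa_{ij}>0$. The ordering $\underline V_i\le\overline V_i$ also has to be checked; it holds once $\xi_0>0$ and $2\alpha\xi_0>\nu_i$.

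With these corrections, your plan coincides with the paper's proof.
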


\textcolor{black}{The proof uses ordered upper and lower solutions adapted to the absence of
linear consumer growth.  The key point is to let the plant lower profile
serve different purposes in different parts of the front.  It is
exponentially small near extinction, where the leading-edge behavior has
to be controlled, and farther behind it is continued by a slowly
increasing logistic profile whose limiting height can be chosen
independently.  The consumer lower profile is Gaussian near the leading
edge and is continued by a small positive constant once the plant density
has become sufficiently large.  This reflects the actual mechanism of the
system: the consumer does not grow on its own at low density, but only
after enough plant biomass is present.}

\textcolor{black}{The new construction removes all technical parameter restrictions that came
from the lower solutions in the earlier four-component argument.  In
particular, one no longer needs a prescribed ordering or separation
between plant and consumer diffusion coefficients, an extra speed bound
depending on such a comparison, or auxiliary inequalities used only to
match the profiles.  The lower and upper solutions can now be chosen
simultaneously for all components and for every}
\begin{align*}
s\in(s_{\mathrm P},s_{\mathrm C}^{\mathrm{ex}}).
\end{align*}
\textcolor{black}{Thus, even in the original four-component setting,
Theorem~\ref{thm:existence} substantially strengthens the
extinction-to-coexistence existence result of
\cite{chen2025savanna}; the extension to general $N$ is an additional
consequence rather than the sole source of novelty.}

The lower boundary is sharp in the following sense.

\begin{theorem}[Nonexistence below the plant threshold]
\label{thm:nonexistence-below-sm}
If
\begin{align*}
s<s_{\mathrm P}:=\max_{i\in\N}2\sqrt{d_ir_i},
\end{align*}
then system~\eqref{eq:traveling-wave-system} admits no strictly positive
solution satisfying \eqref{eq:wave-boundary-conditions}.
\end{theorem}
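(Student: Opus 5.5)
The plan is to argue by contradiction, using the plant equation of an index attaining the maximum in $s_{\mathrm P}$. Suppose $s<s_{\mathrm P}$ and that $(U,V)$ is a strictly positive solution of \eqref{eq:traveling-wave-system} satisfying \eqref{eq:wave-boundary-conditions}. Choose $i_0\in\N$ with $2\sqrt{d_{i_0}r_{i_0}}=s_{\mathrm P}$, so that $s^2<4d_{i_0}r_{i_0}$. Write $U=U_{i_0}$ and
\begin{align*}
c(\xi):=r_{i_0}\Bigl(1-U_{i_0}-\kappa_{i_0i_0}V_{i_0}-\sum_{j\neq i_0}\kappa_{i_0j}U_j\Bigr).
\end{align*}
Then $U$ solves the linear equation $d_{i_0}U''-sU'+c(\xi)U=0$, and $c(\xi)\to r_{i_0}$ as $\xi\to-\infty$. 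Only the plant equation at the leading edge is used; the degenerate consumer equations play no role beyond making the coefficient $c$ tend to $r_{i_0}$.

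First remove the drift with the substitution $U(\xi)=e^{s\xi/(2d_{i_0})}W(\xi)$. The function $W$ is strictly positive and satisfies
\begin{align*}
d_{i_0}W''+\Bigl(c(\xi)-\frac{s^2}{4d_{i_0}}\Bigr)W=0.
\end{align*}
Since $c(\xi)\to r_{i_0}>s^2/(4d_{i_0})$, there are $\varepsilon>0$ and $\xi_0$ such that $q(\xi):=(c(\xi)-s^2/(4d_{i_0}))/d_{i_0}\ge \varepsilon^2$ on $(-\infty,\xi_0]$.

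Next apply the Sturm comparison theorem against $\phi(\xi)=\sin(\varepsilon(\xi-a))$, which solves $\phi''+\varepsilon^2\phi=0$. Take an interval $[a,a+\pi/\varepsilon]\subset(-\infty,\xi_0]$. On it $\phi>0$ in the interior and $\phi$ vanishes at both ends. Since $q\ge\varepsilon^2$, every solution of $W''+qW=0$ must have a zero in $[a,a+\pi/\varepsilon]$. To verify this directly, compute
\begin{align*}
\bigl[W'\phi-W\phi'\bigr]_a^{a+\pi/\varepsilon}=\int_a^{a+\pi/\varepsilon}(\varepsilon^2-q)W\phi\,d\xi\le 0.
\end{align*}
If $W>0$ on the closed interval, the left side equals $W(a+\pi/\varepsilon)\varepsilon+W(a)\varepsilon>0$, which is a contradiction. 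Hence $W$, and therefore $U_{i_0}$, vanishes somewhere, contradicting strict positivity.

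The point needing care is that the Sturm comparison uses only the limit $c(\xi)\to r_{i_0}$, which follows from \eqref{eq:wave-boundary-conditions} because $U_j,V_{i_0}\to0$ as $\xi\to-\infty$. The argument needs no decay rates, and in particular no information on the consumer's non-hyperbolic decay, so the degeneracy causes no trouble here. Negative speeds $s$ are covered by the same computation, since only $s^2<4d_{i_0}r_{i_0}$ is used.
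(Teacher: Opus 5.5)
Your argument has a genuine gap at the first step. The inference ``$s<s_{\mathrm P}$, hence $s^2<4d_{i_0}r_{i_0}$'' holds only for $s>-s_{\mathrm P}$. The theorem covers every $s<s_{\mathrm P}$, including $s\le -s_{\mathrm P}$. For such $s$ we have $s^2\ge s_{\mathrm P}^2\ge 4d_ir_i$ for \emph{every} $i$, so switching to another index does not help. In that range your coefficient satisfies
\[
q(\xi)\longrightarrow \frac{4d_{i_0}r_{i_0}-s^2}{4d_{i_0}^2}\le 0 \qquad (\xi\to-\infty).
\]
Then no $\varepsilon>0$ with $q\ge\varepsilon^2$ exists, and the Sturm comparison gives nothing. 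This is not just a technicality. For $s\le -2\sqrt{d_ir_i}$ the frozen equation $d_iU''-sU'+r_iU=0$ has positive, non-oscillating solutions $e^{\lambda\xi}$ with $\lambda=(s\pm\sqrt{s^2-4d_ir_i})/(2d_i)<0$. What excludes a wave there is not oscillation but the boundary condition $U_i\to0$ at $-\infty$, which your argument uses only to get $c(\xi)\to r_{i_0}$. So your closing sentence, that negative speeds are covered by the same computation, is false for $s\le -s_{\mathrm P}$.

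What is missing is a separate argument for $s\le 0$, and this is why the paper splits into two cases. It is only strictly needed for $s\le -s_{\mathrm P}$, since your Sturm argument correctly handles $-s_{\mathrm P}<s<s_{\mathrm P}$ and matches the paper's Case 2. For $s\le0$ the paper proceeds as follows:
\begin{itemize}
\item It chooses $a_n\to-\infty$ with $U_i'(a_n)\to0$, using the mean value theorem and $U_i\to0$.
\item It integrates the plant equation over $[a_n,\xi]$ and lets $n\to\infty$, which gives
\[
d_iU_i'(\xi)=sU_i(\xi)-r_i\int_{-\infty}^{\xi}\mathcal Q_i(\eta)U_i(\eta)\,d\eta<0,\qquad \xi\le R.
\]
Here $\mathcal Q_i\ge \tfrac12$ on $(-\infty,R]$.
\item Hence $U_i$ is strictly decreasing on $(-\infty,R]$, which is incompatible with $U_i>0$ and $U_i(\xi)\to0$ as $\xi\to-\infty$.
\end{itemize}
A concavity argument would also work. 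Wherever $U_i'\ge0$ on $(-\infty,R]$, the equation gives $d_iU_i''=sU_i'-cU_i<0$. Positivity together with $U_i\to0$ produces a point $\xi_1\le R$ with $U_i'(\xi_1)>0$. Then $U_i'\ge U_i'(\xi_1)$ on $(-\infty,\xi_1]$, which forces $U_i\to-\infty$, a contradiction. Adding either argument closes the gap.
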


Theorems~\ref{thm:existence} and
\ref{thm:nonexistence-below-sm} imply
\begin{align*}
\inf\mathcal S=s_{\mathrm P}
\end{align*}
whenever $s_{\mathrm P}<s_{\mathrm C}^{\mathrm{ex}}$.  Thus
$s_{\mathrm P}$ is the sharp lower boundary of the admissible-speed
set in the infimum sense.  We do not claim here that a wave exists at
the endpoint $s=s_{\mathrm P}$.

The consumer equations yield an obstruction at the opposite end.

\begin{theorem}[Consumer-driven upper obstruction]
\label{thm:upper-speed-nonexistence}
If
\begin{align*}
s\geq
s_{\mathrm C}^{\mathrm{nec}}:=\min_{i\in\N}2\sqrt{D_ie_i},
\end{align*}
then system~\eqref{eq:traveling-wave-system} admits no strictly positive
solution satisfying \eqref{eq:wave-boundary-conditions}.
\end{theorem}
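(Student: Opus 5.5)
The plan is to fix an index $i$ attaining the minimum in $s_{\mathrm C}^{\mathrm{nec}}$, so that $s\ge 2\sqrt{D_ie_i}$. I work only with the scalar consumer equation
\[
D_iV_i''-sV_i'+e_iV_i(U_i-V_i)=0,
\]
treating $U_i$ as a given coefficient with $0<U_i\le 1$ and $U_i\to0$ at $-\infty$. Since $V_i>0$, the Riccati variable $w:=V_i'/V_i$ is well defined and satisfies
\[
D_i w'=-D_iw^2+sw-e_i(U_i-V_i)\;\ge\; -D_iw^2+sw-e_i ,
\]
because $U_i-V_i\le U_i\le1$. The bound $U_i\le 1$ needs a short maximum-principle check on the $U_i$-equation, using $V_i,U_j\ge0$. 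For $s\ge 2\sqrt{D_ie_i}$ the quadratic $-D_iw^2+sw-e_i$ has real roots $0<w_-\le w_+$, with $w_\pm=(s\pm\sqrt{s^2-4D_ie_i})/(2D_i)$. Two facts about these roots drive the argument. First, $w_+<s/D_i$. Second, at $w=w_-$ the inequality gives $w'\ge0$, so the half-line $[w_-,\infty)$ is forward invariant for $w$. If $w=w_-$ and $w'=0$ occur simultaneously, I use a standard strict comparison: perturb $e_i$ slightly, or argue with the comparison ODE.

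The first step is to show $w(\xi)\to0$ as $\xi\to+\infty$. This holds because $(U,V)\to E^*$ and $v_i^*>0$. The convergence of derivatives $V_i'\to0$ follows from standard interior elliptic/ODE estimates on the bounded profile. Combined with forward invariance, this forces $w(\xi)<w_-$ for every $\xi\in\mathbb R$: if $w\ge w_-$ at any point, it would stay $\ge w_->0$ forever, which contradicts $w\to0$.

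The second step, which I expect to be the main obstacle, is the behaviour at $-\infty$. I will show that the decay of $V_i$ there must be exponential with the strong rate $s/D_i$, that is, $w\to s/D_i$. I write the traveling-wave system near the extinction state as a first-order system. The equilibrium has unstable eigenvalues coming from the plant components, namely the roots of $d_i\lambda^2-s\lambda+r_i=0$, which are real since $s\ge s_{\mathrm C}^{\mathrm{nec}}$ and positive. Each consumer direction contributes an eigenvalue $s/D_i>0$ and a zero eigenvalue. A solution converging to the origin as $\xi\to-\infty$ lies on the center-unstable manifold. I then treat two cases.

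\emph{Case 1: the $V_i$-component has a nontrivial strong-unstable part.} Then $V_i\sim Ce^{(s/D_i)\xi}$ and $w\to s/D_i$. Since $s/D_i>w_+\ge w_-$, this contradicts the first step.

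\emph{Case 2: $V_i$ is governed by the center direction.} The reduced flow is $V_i'\approx (e_i/s)V_i(U_i-V_i)$, with $U_i$ decaying exponentially. If $V_i\gg U_i$ along a sequence, then $V_i'<0$ there, which is incompatible with $V_i>0$ and $V_i\to0$ at $-\infty$. If instead $V_i\lesssim U_i$, then $(\log V_i)'\approx(e_i/s)(U_i-V_i)$ is integrable on $(-\infty,0]$, because $U_i$ decays exponentially. Hence $\log V_i$ has a finite limit, so $V_i$ tends to a positive constant, contradicting $V_i\to0$.

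Making Case 2 rigorous requires either a center-manifold reduction in which the plant components enter as exponentially small forcing, or a direct integration of $(e^{-s\xi/D_i}V_i')'=-(e_i/D_i)e^{-s\xi/D_i}V_i(U_i-V_i)$. I would try the direct integration first. It expresses $V_i'$ as a tail integral, and this yields $V_i'\le (e_i/s)\sup(U_i)\,V_i$ up to lower-order terms, which is enough for the integrability argument.

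Both cases lead to a contradiction, so no strictly positive solution exists for $s\ge s_{\mathrm C}^{\mathrm{nec}}$. Note that strict positivity of all components and the limits at $\pm\infty$ in \eqref{eq:wave-boundary-conditions} are what the argument actually uses; the weak-interaction condition enters only through $v_i^*>0$.
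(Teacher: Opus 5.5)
Your Step~1 is sound and matches the paper's Riccati argument. The paper uses the last crossing of $s/(2D_i)$; your forward invariance of $[w_-,\infty)$ does the same job, and strictness is automatic because $U_i-V_i<1$ once $V_i>0$. The gap is Step~2, which carries essentially all of the difficulty, and the case analysis there does not work.

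\begin{enumerate}
\item Saying that the orbit lies on a center--unstable manifold is vacuous. The linearization at the origin has no eigenvalues with negative real part, so that manifold is a full neighborhood. To make ``nontrivial strong-unstable part'' versus ``governed by the center direction'' a genuine dichotomy, you need an asymptotic-phase (exponential tracking) theorem; this is exactly what the paper imports from Knobloch--Aulbach.
\item Your Case~1 conclusion $V_i\sim Ce^{(s/D_i)\xi}$ is asserted, not proved. The paper needs Lemma~\ref{lem:degenerate-leading-edge} for it.
\item In Case~2, $V_i'<0$ along a sequence $\xi_n\to-\infty$ is perfectly compatible with $V_i>0$ and $V_i\to0$. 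You would need $V_i'<0$ on a whole half-line.
\item The relation $(\log V_i)'\approx(e_i/s)(U_i-V_i)$ is unjustified. Tracking errors are $O(e^{\eta\xi})$ in absolute size, not relative to $V_i$. So when $V_i$ is itself exponentially small, as in your subcase $V_i\lesssim U_i$, they say nothing about $(\log V_i)'$. In fact for such $V_i$ one has $(\log V_i)'\to s/D_i$, so ``$V_i$ tends to a positive constant'' is false there.
\item Your tail-integral bound $V_i'\le(e_i/s)\sup U_i\,V_i$ tacitly uses $V_i(t)\le V_i(\xi)$ for $t\ge\xi$. That fails for an increasing front.
\item A minor point: $s\ge s_{\mathrm C}^{\mathrm{nec}}$ does not make the plant roots real. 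Only their positive real parts matter.
\end{enumerate}

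The missing idea is that Step~1 already gives the growth control the tail integral needs. Since $w<w_-$ on $\mathbb R$, you have $V_i(t)\le V_i(\xi)e^{w_-(t-\xi)}$ for $t\ge\xi$, and $w_-\le s/(2D_i)<s/D_i$. The exact identity
\[
V_i'(\xi)=\frac{e_i}{D_i}\int_\xi^{\infty}e^{-s(t-\xi)/D_i}\,V_i(t)\bigl(U_i(t)-V_i(t)\bigr)\,dt
\]
holds on all of $\mathbb R$ because $V_i'$ is bounded. Inserting the growth bound into it and dropping $-V_i^2$ gives
\[
w(\xi)\le\varepsilon(\xi):=\frac{e_i}{D_i}\int_\xi^{\infty}e^{-(s/D_i-w_-)(t-\xi)}\,U_i(t)\,dt .
\]
By Tonelli, $\varepsilon\in L^1((-\infty,0])$ as soon as $U_i\in L^1((-\infty,0])$. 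That follows from the identity \eqref{eq:integrated-plant-limit}: its derivation does not use $s\le0$, and $\mathcal Q_i\ge\frac12$ near $-\infty$. Then $\log V_i(\xi)\ge\log V_i(0)-\|\varepsilon\|_{L^1((-\infty,0])}$ for all $\xi\le0$, contradicting $V_i\to0$.

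Completed this way, your argument would be a genuinely more elementary route than the paper's. It avoids the center-manifold and tracking analysis of Lemma~\ref{lem:leading-edge-exponential-decay} and Lemma~\ref{lem:degenerate-leading-edge} entirely, and never needs the exact leading-edge rate $s/D_i$. As submitted, though, the leading-edge step is not proved.
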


\textcolor{black}{The proof uses the degenerate leading-edge dynamics in an essential way,
and its main difficulty is to determine the decay of the consumer profiles
when the linearized system contains zero modes.  Exponential decay cannot
simply be read off from the linearization.  We first use a center-manifold
reduction together with exponential tracking to describe the possible
slow approach to extinction, and then use positivity of the traveling wave
to rule out every nonzero trajectory along the center directions.  This
forces every positive wave to decay exponentially at $-\infty$ and yields}
\begin{align*}
\frac{V_i'(\xi)}{V_i(\xi)}
\longrightarrow
\frac{s}{D_i}
\qquad
\text{as }\xi\to-\infty.
\end{align*}
\textcolor{black}{Once this leading-edge behavior is established, a Riccati equation for
$V_i'/V_i$, evaluated at its last crossing of $s/(2D_i)$, gives}
\begin{align*}
s<2\sqrt{D_ie_i}
\qquad
\text{for every }i\in\N.
\end{align*}
\textcolor{black}{The upper obstruction is therefore universal: unlike
$s_{\mathrm C}^{\mathrm{ex}}$, it does not involve the competition matrix
$K$.  It is a direct consequence of the fact that the consumers have no
linear growth at extinction.}

The interval
\begin{align*}
[s_{\mathrm C}^{\mathrm{ex}},s_{\mathrm C}^{\mathrm{nec}})
\end{align*}
is not merely an artifact of comparing a sufficient condition with a
necessary condition.  It contains genuine traveling waves.

\begin{proposition}[An explicit wave beyond the constructive threshold]
\label{prop}
There exist admissible coefficients for $N=2$, satisfying the
weak-interaction condition, for which
system~\eqref{eq:traveling-wave-system} admits a strictly positive
traveling wave with speed $s_{\mathrm{exact}}$ such that
\begin{align*}
s_{\mathrm P}
<
s_{\mathrm C}^{\mathrm{ex}}
<
s_{\mathrm{exact}}
<
s_{\mathrm C}^{\mathrm{nec}}.
\end{align*}
Consequently, $s_{\mathrm C}^{\mathrm{ex}}$ is not an intrinsic upper barrier for
positive extinction-to-coexistence traveling waves.
\end{proposition}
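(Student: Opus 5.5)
We look for an explicit wave in a symmetric two-species setting, where the $2N$-dimensional profile system collapses to a single plant--consumer pair, and then use a logistic ansatz that turns \eqref{eq:traveling-wave-system} into polynomial identities. Take $N=2$ with $d_1=d_2=d$, $r_1=r_2=r$, $D_1=D_2=D$, $e_1=e_2=e$, $\kappa_{11}=\kappa_{22}=\alpha$ and $\kappa_{12}=\kappa_{21}=\beta$, with $\alpha+\beta<1$. Then $u^*=v^*=(1+\alpha+\beta)^{-1}=:m$ in both components. If we look for $U_1=U_2=U$ and $V_1=V_2=V$, the system reduces to
\begin{align*}
dU''-sU'+rU(1-(1+\beta)U-\alpha V)=0,\qquad DV''-sV'+eV(U-V)=0 .
\end{align*}
Let $\varphi(\xi)=(1+e^{-\lambda\xi})^{-1}$, so that $\varphi'=\lambda\varphi(1-\varphi)$. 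Every derivative of a polynomial in $\varphi$ is again a polynomial in $\varphi$. We make the ansatz
\begin{align*}
V=m\varphi^2,\qquad U=m\bigl(\varphi^2+c\,\varphi(1-\varphi)\bigr),\qquad c>0 .
\end{align*}
This ansatz has several convenient features. First, $U,V>0$. Second, $(U,V)\to(0,0)$ as $\xi\to-\infty$ and $(U,V)\to(m,m)$ as $\xi\to+\infty$. Third, $U-V=mc\varphi(1-\varphi)$, so the consumer reaction is a pure polynomial multiple of $\varphi^3(1-\varphi)$. Finally, $V\sim e^{2\lambda\xi}$ and $U\sim e^{\lambda\xi}$ at the leading edge, which matches the structure expected from Theorem~\ref{thm:upper-speed-nonexistence}.

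Next we substitute the ansatz into the consumer equation. Using $(\varphi^2)'=2\lambda\varphi^2(1-\varphi)$ and $(\varphi^2)''=2\lambda^2\varphi^2(1-\varphi)(2-3\varphi)$, the consumer equation becomes, after dividing by $m\varphi^2(1-\varphi)$,
\begin{align*}
2D\lambda^2(2-3\varphi)-2s\lambda+emc\,\varphi=0 .
\end{align*}
Matching coefficients forces $s=2D\lambda$ and $c=6D\lambda^2/(em)$. We then substitute into the plant equation. The whole expression is a polynomial in $\varphi$ with a factor $\varphi(1-\varphi)$. After removing this factor, one is left with a polynomial of degree at most $2$ or $3$ in $\varphi$, and each coefficient must vanish. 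The lowest-order coefficient is the linearization $d\lambda^2-s\lambda+r=0$. With $s=2D\lambda$ this gives $r=\lambda^2(2D-d)$, so we need $d<2D$. The remaining coefficients give a few algebraic relations among $d,r,\alpha,\beta,D,e,\lambda$. The free parameters are $d,D,e,\alpha,\beta,\lambda$, together with the scaling freedom. I expect that these relations can be solved explicitly, for instance by fixing $\lambda$, $D$ and $d$, reading off $r$, and then solving for $e$, $\alpha$ and $\beta$. If the given ansatz is overdetermined, I would enlarge it to $U=m(\varphi^2+c_1\varphi(1-\varphi)+c_2\varphi^2(1-\varphi))$ to gain one extra unknown.

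The final step is to verify the chain of inequalities for the explicit parameters. We have $s_{\mathrm P}=2\sqrt{dr}$ and $\sigma=1-\alpha-\beta$, so that $s_{\mathrm C}^{\mathrm{ex}}=2\sqrt{De\sigma}$ and $s_{\mathrm C}^{\mathrm{nec}}=2\sqrt{De}$. The exact speed is $s_{\mathrm{exact}}=2D\lambda$. The inequality $s_{\mathrm{exact}}<s_{\mathrm C}^{\mathrm{nec}}$ is then automatic by Theorem~\ref{thm:upper-speed-nonexistence}, and $s_{\mathrm{exact}}\ge s_{\mathrm P}$ holds by Theorem~\ref{thm:nonexistence-below-sm}. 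Both of these are nevertheless easy to recheck directly. What remains to arrange are the strict inequalities $s_{\mathrm P}<s_{\mathrm C}^{\mathrm{ex}}<s_{\mathrm{exact}}$. These require $\sigma$ to be small enough, meaning strong self-consumption or competition with $\alpha+\beta$ close to $1$, but not too small.

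The main obstacle is exactly this compatibility. The algebraic constraints coming from the plant equation must leave enough freedom to choose $\sigma$ in the required range, while keeping $\alpha,\beta\ge0$, $r>0$ and $\alpha+\beta<1$. I would handle it by first solving the constraints symbolically with one free parameter. I would then exhibit a concrete rational choice of coefficients and check the four speeds numerically.
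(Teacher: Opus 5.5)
Your ansatz is the paper's. With $\lambda=2$ one has $\varphi=\tfrac12(1+\tanh\xi)$, and the paper's profiles are $U_i=m_i\varphi$, $V_i=m_i\varphi^2$ with $m_1=\tfrac12$, $m_2=\tfrac45$. That is your ansatz with $c=1$, only with non-symmetric coefficients; the equal $D_i$ there are forced by $s=2D_i\lambda$. Your consumer computation ($s=2D\lambda$, $c=6D\lambda^2/(em)$) is correct.

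\textbf{The gap} is that the proposal stops where the proof begins. The proposition is purely existential. Its entire content is an admissible parameter set for which the profile also solves the \emph{plant} equation and $s_{\mathrm P}<s_{\mathrm C}^{\mathrm{ex}}<s_{\mathrm{exact}}$ holds. You leave exactly this as an expectation (``I expect these relations can be solved'', ``if the ansatz is overdetermined I would enlarge it'', ``the main obstacle is exactly this compatibility''). As written, nothing rules out that the plant constraints force $\alpha\le0$, $\beta<0$, or $\sigma$ outside the needed range.

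\textbf{Closing it within your framework.} Using $(1+\alpha+\beta)m=1$, write $1-(1+\beta)U-\alpha V=(1-\varphi)(1+B\varphi)$ with $B=m\bigl((1+\beta)(1-c)+\alpha\bigr)$. After dividing the plant equation by $m\varphi(1-\varphi)$, the $\varphi^2$-coefficient is $(1-c)(rB-6d\lambda^2)$, so $c=1$ is allowed. Then $em=6D\lambda^2$, and the two remaining coefficients give $r=\lambda^2(2D-d)$ and $\alpha m=2d/(2D-d)$.

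Set $\rho=\alpha+\beta$ and $t=d/D\in(0,2)$, so that $\sigma/m=1-\rho^2$. Dividing all four speeds by $2D\lambda$, the required chain becomes
\[
\sqrt{t(2-t)}<\sqrt{6(1-\rho^2)}<1<\sqrt{6(1+\rho)},
\]
subject to $\beta=\rho-\alpha\ge0$, i.e.\ $t\le 2\rho/(2+3\rho)$.

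For example, take $\rho=\tfrac{19}{20}$ and $t=\tfrac{3}{10}$. Then $t(2-t)=0.51<6(1-\rho^2)=0.585<1$, and $t\le 38/97$. With $D=\lambda=1$ this gives $d=\tfrac{3}{10}$, $r=\tfrac{17}{10}$, $e=\tfrac{117}{10}$, $\alpha=\tfrac{117}{170}$, $\beta=\tfrac{89}{340}$, $s=2$, $U_i=\tfrac{20}{39}\varphi$, $V_i=\tfrac{20}{39}\varphi^2$.

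With this step inserted, your symmetric reduction is a complete proof. It also explains where coefficients like the paper's come from, which the paper's ``straightforward calculation'' does not. Your appeal to Theorem~\ref{thm:upper-speed-nonexistence} for $s_{\mathrm{exact}}<s_{\mathrm C}^{\mathrm{nec}}$ is legitimate, but here it is immediate anyway.
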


The explicit profile is obtained from a hyperbolic-tangent ansatz.
Exact traveling waves have previously been useful as structural probes
in competition--diffusion systems
\cite{hung2012exact,chen2012exact}. Here the explicit solution serves a
different purpose: it separates the threshold produced by a general
construction from the actual propagation capability of the system.
It shows that the intermediate regime is mathematically genuine and
that determining the complete admissible-speed set remains a
nontrivial open problem.

Traveling waves in predator--prey and consumer--resource systems have
been studied using topological, dynamical-systems, shooting, and
upper--lower solution methods; see, for example,
\cite{gardner1984existence,dunbar1986traveling,
huang2003existence,hsu2012existence,hsu2019existence}.
\textcolor{black}{The present work advances the preceding theory in three related directions.
First, we resolve the main existence difficulty created by the loss of the
linear consumer growth term.  The new lower solutions work for a general
$2N$-component system and require no extra comparison between plant and
consumer diffusion coefficients and no technical matching assumptions;
the only remaining conditions are the weak-interaction hypothesis and the
nonemptiness of the explicit speed interval.  This improvement is already
new for the original four-component model.  Second, we identify a new
consequence of the same linear degeneracy: a universal finite upper speed
for full extinction-to-coexistence waves.  Its proof requires a separate
leading-edge analysis to exclude slow center-direction decay before the
Riccati argument can be applied.  Third, combining this upper obstruction
with the sharp plant-driven lower threshold, existence on a full explicit
interval, and an exact wave beyond the constructive threshold yields the
bounded propagation-window structure \eqref{eq:master-window}.}

Before closing the introduction, we recall that the admissible-speed
set of a reaction--diffusion front depends strongly on the type of
connection under consideration.  For scalar monostable equations, and
for many monostable competition systems, traveling fronts typically
exist for every speed above a minimal threshold, so that the
admissible-speed set is a half-line
\cite{AronsonWeinberger1978, ChenHsiaoWang2024}.
By contrast, scalar bistable fronts typically propagate with a
uniquely selected speed
\cite{FifeMcLeod1977,ItoNinomiya2026}.
Ito and Ninomiya recently developed min--max characterizations of the
minimal and maximal speeds associated with two oppositely truncated
semi-front problems.  Under their hypotheses, these quantities
describe the infimum and supremum of the speed set of full bounded
fronts; in the classical bistable case they coincide with the unique
wave speed.  Their unbounded fronts again exhibit a one-sided
minimal-threshold structure
\cite{ItoNinomiya2026}.

Finite upper restrictions for full traveling waves are comparatively
rare.  Important examples arise in autocatalytic and generalized
Gray--Scott systems with linear or superlinear decay, for which the
admissible-speed set associated with a fixed upstream state is
nonempty but bounded
\cite{ChenQiZhang2016,ZhengChenQiZhou2018}.
Multiple propagation scales also occur in biological competition
models, for instance through distinct species-dependent spreading
speeds in free-boundary problems and through different invasion modes
in multi-species Cauchy problems
\cite{DuWu2018,Wu2019,JiangWu2026}.
These results concern different moving interfaces or
species-dependent spreading rates, rather than a bounded set of
admissible speeds for one fixed full heteroclinic connection.

\textcolor{black}{The present plant--consumer system exhibits a different mechanism: the
plant equations determine the sharp lower boundary, whereas the loss of
linear consumer growth at extinction produces a universal finite upper
obstruction.  Moreover, the constructive and necessary upper thresholds
are genuinely distinct, as shown by an explicit full front lying strictly
between them.  To the best of our knowledge, this combination of a general
open existence interval obtained without additional inequalities comparing
diffusion coefficients or technical profile-matching conditions, a
universal upper-speed obstruction, and an explicit wave beyond the
constructive threshold has not previously been established for a
high-dimensional plant--consumer reaction--diffusion system.}

\section{The traveling-wave system and generalized upper--lower solutions for $s_{\mathrm P}<s<s_{\mathrm C}^{\mathrm{ex}}$} \label{sec 2}

Let
\[
 \I:=\{1,\dots,N\}.
\]
For each $i\in\I$, consider
\begin{equation}
\label{eq:TW}
\left\{
\begin{aligned}
 d_i U_i''-sU_i'
 &+r_iU_i\left(1-U_i-\kappa_{ii}V_i
 -\sum_{\substack{j\in\I,\,j\ne i}}\kappa_{ij}U_j\right)=0,
 \\
 D_i V_i''-sV_i'
 &+e_iV_i(U_i-V_i)=0.
\end{aligned}
\right.
\end{equation}
We assume
\begin{equation}
\label{eq:positive-parameters}
 d_i,D_i,r_i,e_i>0,
 \qquad
 \kappa_{ij}\ge 0
 \quad (i,j\in\I).
\end{equation}
Set
\begin{equation}
\label{eq:rho-sigma}
 \rho_i:=\sum_{j\in\I}\kappa_{ij},
 \qquad
 \sigma_i:=1-\rho_i,
 \qquad
 \Theta:=\max_{i\in\I}\rho_i.
\end{equation}
Throughout we impose
\begin{equation}
\label{eq:weak-competition}
 \Theta<1.
\end{equation}
Hence
\begin{equation}
\label{eq:sigma-positive}
 0<\sigma_i\le 1
 \qquad (i\in\I).
\end{equation}
Define the characteristic speeds
\begin{equation}
\label{eq:speeds}
 s_{\mathrm P}:=\max_{i\in\I}2\sqrt{d_ir_i},
 \qquad
 s_{\mathrm C}^{\mathrm{ex}}
 :=\min_{i\in\I}2\sqrt{D_ie_i\sigma_i}.
\end{equation}

\subsection{Generalized upper and lower solutions}

We use the mixed ordering dictated by the signs of the coupling terms. A quadruple
\[
 (\overline U,\overline V,\underline U,\underline V)
\]
is called an ordered generalized upper--lower pair for \eqref{eq:TW} if the following properties hold.

\begin{enumerate}[label=(H\arabic*),leftmargin=12mm]
\item Each component is bounded, positive, and continuous on $\R$, and is $C^2$ away from a finite set of corner points.

\item The componentwise ordering
\begin{equation}
\label{eq:ordering-definition}
 0<\underline U_i\le \overline U_i,
 \qquad
 0<\underline V_i\le \overline V_i
 \qquad\text{on }\R
\end{equation}
holds for every $i\in\I$.

\item Away from the corner set,
\begin{align}
\label{eq:upper-U-def}
 d_i\overline U_i''-s\overline U_i'
 &+r_i\overline U_i
 \left(1-\overline U_i-\kappa_{ii}\underline V_i
 -\sum_{j\ne i}\kappa_{ij}\underline U_j\right)
 \le 0,
\end{align}
\begin{align}
\label{eq:lower-U-def}
 d_i\underline U_i''-s\underline U_i'
 &+r_i\underline U_i
 \left(1-\underline U_i-\kappa_{ii}\overline V_i
 -\sum_{j\ne i}\kappa_{ij}\overline U_j\right)
 \ge 0,
\end{align}
\begin{align}
\label{eq:upper-V-def}
 D_i\overline V_i''-s\overline V_i'
 +e_i\overline V_i(\overline U_i-\overline V_i)
 \le 0,
\end{align}
and
\begin{align}
\label{eq:lower-V-def}
 D_i\underline V_i''-s\underline V_i'
 +e_i\underline V_i(\underline U_i-\underline V_i)
 \ge 0.
\end{align}

\item At every corner $\xi_0$, upper solutions have downward derivative jumps and lower solutions have upward derivative jumps:
\begin{equation}
\label{eq:corner-definition}
 \overline U_i'(\xi_0^+)\le \overline U_i'(\xi_0^-),
 \qquad
 \overline V_i'(\xi_0^+)\le \overline V_i'(\xi_0^-),
\end{equation}
\begin{equation}
\label{eq:corner-definition-lower}
 \underline U_i'(\xi_0^-)\le \underline U_i'(\xi_0^+),
 \qquad
 \underline V_i'(\xi_0^-)\le \underline V_i'(\xi_0^+).
\end{equation}
At a corner belonging to another component, the relevant component is smooth, so the corresponding inequality holds with equality.
\end{enumerate}

\begin{lemma}
\label{lem:ordered-barriers}
Assume \eqref{eq:positive-parameters} and \eqref{eq:weak-competition}. Fix
\begin{equation}
\label{eq:s-fixed}
 s\in(s_{\mathrm P},s_{\mathrm C}^{\mathrm{ex}}).
\end{equation}
Then there exists an ordered generalized upper--lower pair
\[
 (\overline U,\overline V,\underline U,\underline V)
\]
for \eqref{eq:TW}. Moreover,
\begin{equation}
\label{eq:left-limits}
 \lim_{\xi\to-\infty}
 (\overline U,\overline V,\underline U,\underline V)(\xi)=0,
\end{equation}
while, for every $i\in\I$,
\begin{equation}
\label{eq:right-bounds}
 0<\lim_{\xi\to+\infty}\underline U_i(\xi)<\sigma_i,
 \qquad
 0<\lim_{\xi\to+\infty}\underline V_i(\xi)<1,
\end{equation}
and
\begin{equation}
\label{eq:upper-right-limits}
 \lim_{\xi\to+\infty}\overline U_i(\xi)
 =\lim_{\xi\to+\infty}\overline V_i(\xi)=1.
\end{equation}
\end{lemma}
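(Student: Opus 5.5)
The plan is to build the upper pair explicitly from exponentials capped at $1$, and the lower pair piecewise: a KPP-type leading-edge profile joined to a slow logistic profile for each plant, and a Gaussian joined to a small constant for each consumer. The mixed ordering in (H3) makes each inequality depend on the opposite barrier only through bounded quantities, so the upper pair can be fixed first and the lower pair tuned afterwards.

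\emph{Upper pair.} Since $s>s_{\mathrm P}\ge 2\sqrt{d_ir_i}$, the equation $d_i\lambda^2-s\lambda+r_i=0$ has a smallest positive root $\lambda_i$. Set
\[
\overline U_i(\xi)=\min\{1,e^{\lambda_i\xi}\}.
\]
For $\xi<0$ the linear part vanishes and the remaining reaction term is $-r_i\overline U_i(\overline U_i+\kappa_{ii}\underline V_i+\sum_{j\ne i}\kappa_{ij}\underline U_j)\le 0$, so \eqref{eq:upper-U-def} holds. On the constant part $1$ the left side is nonpositive. At the corner $\xi=0$ the derivative drops from $\lambda_i$ to $0$, which is the required downward jump.

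For the consumer, take $\mu_i:=\min\{\lambda_i,s/D_i\}$ and
\[
\overline V_i(\xi)=\min\{1,e^{\mu_i\xi}\}.
\]
Because $\mu_i\le\lambda_i$, we have $\overline V_i\ge\overline U_i$ on $\mathbb R$, so $e_i\overline V_i(\overline U_i-\overline V_i)\le0$. Because $\mu_i\le s/D_i$, we have $D_i\mu_i^2-s\mu_i\le0$. Together these give \eqref{eq:upper-V-def}, and the corner at $0$ again has a downward jump. Both \eqref{eq:left-limits} and \eqref{eq:upper-right-limits} are then immediate for the upper pair.

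\emph{Lower pair, plants.} Near $-\infty$, take
\[
\underline U_i=e^{\lambda_i\xi}\bigl(1-M_ie^{\varepsilon\xi}\bigr)
\]
with small $\varepsilon>0$ and large $M_i$. This is the standard KPP lower profile. Its linear defect $-M_i\bigl(d_i(\lambda_i+\varepsilon)^2-s(\lambda_i+\varepsilon)+r_i\bigr)e^{(\lambda_i+\varepsilon)\xi}>0$ absorbs the quadratic losses $r_i\underline U_i(\underline U_i+\kappa_{ii}\overline V_i+\sum_{j}\kappa_{ij}\overline U_j)=O(e^{(\lambda_i+\min_j\mu_j)\xi})$, after choosing $\varepsilon<\min_j\mu_j$. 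Here $s>2\sqrt{d_ir_i}$ is used to make the characteristic polynomial negative at $\lambda_i+\varepsilon$.

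To the right of the maximum of this profile, I will continue $\underline U_i$ by a slowly increasing logistic-type function. This function rises from below the maximum value to a limit $a_i\in(0,\sigma_i)$ chosen freely. The subsolution inequality holds there because $1-\underline U_i-\kappa_{ii}\overline V_i-\sum_{j\ne i}\kappa_{ij}\overline U_j\ge 1-a_i-\rho_i\ge\sigma_i-a_i>0$, using $\overline U,\overline V\le1$. A sufficiently slow rise makes the derivative terms negligible against this positive reaction margin. The junction is placed where the two pieces cross, with the logistic branch chosen steeper from the left, so that the derivative jump is upward.

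\emph{Lower pair, consumers.} Take a Gaussian $\underline V_i=\eta e^{-\beta(\xi-\xi_i)^2}$ on the left. For $\xi\ll\xi_i$ we have $D_i\underline V_i''-s\underline V_i'=\bigl(4D_i\beta^2(\xi-\xi_i)^2-2D_i\beta+2\beta s(\xi-\xi_i)\bigr)\underline V_i$, and the quadratic term dominates, so the inequality holds with $e_i\underline V_i(\underline U_i-\underline V_i)\ge-e_i\underline V_i^2$ absorbed. The Gaussian also decays faster than every exponential, which gives $\underline V_i\le\overline V_i$ and $\underline V_i\le\underline U_i$ at the far left. Once $\underline U_i$ has reached a level $\ge 2\eta$, switch to the constant $\eta<1$. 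There the inequality reduces to $\underline U_i\ge\eta$.

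In the intermediate zone, where the Gaussian is near its top and the quadratic term no longer helps, positivity must come from $e_i\underline V_i(\underline U_i-\underline V_i)$. This requires $\underline U_i$ to already be of order $\sigma_i$ there. I expect this zone is where the hypothesis $s<2\sqrt{D_ie_i\sigma_i}$ enters. Writing $\underline V_i=e^{s\xi/(2D_i)}w$ turns the inequality into $D_iw''+\bigl(e_i(\underline U_i-\underline V_i)-s^2/(4D_i)\bigr)w\ge0$, and the coefficient is positive when $\underline U_i\approx\sigma_i$ and $\underline V_i$ is small. So I would first try to handle this zone with a $\cos$- or Gaussian-type transition in the variable $w$.

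\emph{Main obstacle.} The hard step is the simultaneous placement of these junction points. Each consumer must become $\eta$ only after its plant exceeds roughly $\eta$, every corner must have the correct jump direction, and the orderings $\underline U_i\le\overline U_i$ and $\underline V_i\le\overline V_i$ must hold at the transitions. I would handle this by first fixing $a_i$ and the logistic profiles, then shifting each consumer profile far enough to the right, and finally taking $\eta$ and $\beta$ small. The limits in \eqref{eq:right-bounds} are then $a_i<\sigma_i$ and $\eta<1$.
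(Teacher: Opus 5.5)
Your construction is the paper's: the same capped exponentials $\overline U_i,\overline V_i$ with $\nu_i=\min\{\lambda_i^-,s/D_i\}$, a KPP bump joined past its maximum to a slow logistic bridge, and a Gaussian consumer profile frozen at its peak. However, the order of constants in your last paragraph is backwards, and the construction fails in that order. To the left of the point $b_i$ where the plant bridge reaches a level $\delta_i^0$, $\underline U_i$ is arbitrarily small. There you can rely only on the Gaussian itself, i.e. on
\[
4D_i\beta^2y^2+2s\beta y-2D_i\beta-e_i\eta>0,\qquad y=\xi-\xi_i .
\]
This holds only for $y\le -L$ with $L$ of order $s/(D_i\beta)$. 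Similarly, the global ordering $\underline V_i\le\overline V_i$ on $(-\infty,0]$ needs $2\beta\xi_i>\nu_i$. Suppose you fix the shift first and then shrink $\beta$. The minimum $-s^2/(4D_i)$ of $4D_i\beta^2y^2+2s\beta y$, attained at $y=-s/(4D_i\beta)$, then moves to $\xi\to-\infty$. There $\underline U_i<\theta_i:=s^2/(4D_ie_i)$, and the lower consumer inequality fails. The order must be reversed. First fix $\eta$ and $\beta$ from the gap $G_i:=e_i\delta_i^0-s^2/(4D_i)$ alone. Only then place the peak at $\xi_i=b_i+L$, with $L$ large depending on $\beta,\eta,\nu_i$. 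This is what the paper does with $v_i,\beta_i$ and then $L_i$.

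With that order, the intermediate zone you left open is handled by completing the square, which holds for every $y$:
\[
4D_i\beta^2y^2+2s\beta y\ge -\frac{s^2}{4D_i}.
\]
So on the whole interval $[b_i,\xi_i]$, which is long and not just near the peak, the bracket is at least
\[
e_i(\delta_i^0-\eta)-\frac{s^2}{4D_i}-2D_i\beta\ \ge\ \tfrac34G_i
\]
if, e.g., $e_i\eta\le G_i/8$ and $2D_i\beta=G_i/8$. This is your $w$-substitution with $w$ itself Gaussian, so that $D_iw''\ge-2D_i\beta w$. Positivity of $G_i$ requires $\delta_i^0>\theta_i$. Hence the bridge limit cannot be chosen freely in $(0,\sigma_i)$: it must lie in $(\delta_i^0,\sigma_i)\subset(\theta_i,\sigma_i)$, and this is exactly where $s<s_{\mathrm C}^{\mathrm{ex}}$ is used. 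With these two corrections your argument coincides with the paper's proof.
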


The proof is constructive. We first define the upper solutions, then construct the plant lower solution from a left bump and a logistic bridge, and finally construct the Gaussian consumer lower solution. The last section verifies that all restrictions are simultaneously compatible.

\begin{lemma}[Existence between ordered generalized upper and lower solutions]
\label{lem:existence-between-barriers}
Let $s>0$. Assume that system~\eqref{eq:TW} admits an ordered generalized
upper--lower pair
\[
(\overline U,\overline V,\underline U,\underline V)
\]
in the sense of the preceding definition. Then there exists a solution
\[
(U,V)\in C^\infty(\mathbb R,\mathbb R^{2N})
\]
of \eqref{eq:TW} such that, for every $i\in\I$ and $\xi\in\mathbb R$,
\[
\underline U_i(\xi)\le U_i(\xi)\le\overline U_i(\xi),
\qquad
\underline V_i(\xi)\le V_i(\xi)\le\overline V_i(\xi).
\]
In particular, if all lower components are strictly positive, then $(U,V)$
is strictly positive on $\mathbb R$.
\end{lemma}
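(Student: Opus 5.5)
The plan is the standard monotone-iteration scheme for quasi-monotone systems with mixed ordering, carried out first on bounded intervals and then passed to the whole line. We do not use a fixed point argument on all of $\R$ directly.

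\textbf{Step 1: monotone structure.} Fix $\beta>0$ larger than the Lipschitz constants of the reaction terms on the order box
\[
\underline U_i\le U_i\le\overline U_i,\qquad \underline V_i\le V_i\le \overline V_i .
\]
The box is bounded by (H1). Write the plant nonlinearity as $F_i(U,V)=r_iU_i(1-U_i-\kappa_{ii}V_i-\sum_{j\ne i}\kappa_{ij}U_j)$ and the consumer nonlinearity as $G_i(U,V)=e_iV_i(U_i-V_i)$. On the box, $F_i+\beta U_i$ is nondecreasing in $U_i$ and nonincreasing in $V_i$ and in $U_j$ for $j\neq i$. Likewise $G_i+\beta V_i$ is nondecreasing in $V_i$ and in $U_i$. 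This is exactly the mixed ordering encoded in (H3): $\overline U$ is paired with $\underline V,\underline U_j$, and so on.

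\textbf{Step 2: Dirichlet problems on $[-n,n]$.} Define the iteration map $\Phi$ as follows. Given $(U,V)$ in the box, let $(\tilde U,\tilde V)$ solve the decoupled linear problems
\[
d_i\tilde U_i''-s\tilde U_i'-\beta\tilde U_i=-(F_i(U,V)+\beta U_i),
\]
and similarly for $\tilde V_i$ with $D_i$ and $G_i$. Use the boundary data $\overline U_i(\pm n)$ and $\overline V_i(\pm n)$ for the upper iteration, and the corresponding lower values for the lower iteration.

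Start one sequence at $(\overline U,\underline V)$ and the other at $(\underline U,\overline V)$. These are coupled in the mixed way: the upper plant iterate uses the lower consumer and lower competitor iterates. The maximum principle for $d\,\partial^2-s\,\partial-\beta$ gives two facts.
\begin{itemize}
\item The corner conditions (H4) make the upper and lower profiles weak super- and subsolutions. A downward kink is a negative Dirac mass in $\overline U_i''$, so the comparison survives across corners.
\item By induction, the upper iterates decrease and the lower iterates increase, and they stay inside the box.
\end{itemize}
The limits are solutions $(U^n,V^n)$ of \eqref{eq:TW} on $(-n,n)$ with $\underline U\le U^n\le\overline U$ and $\underline V\le V^n\le\overline V$. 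This step is routine, except that one has to check the mixed-monotone bookkeeping carefully for the competition terms.

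\textbf{Step 3: passage to $n\to\infty$.} The step I expect to need the most care is obtaining compactness uniform in $n$. The $U^n,V^n$ are uniformly bounded by $\sup\overline U$ and $\sup\overline V$, so the right-hand sides of the ODEs are uniformly bounded. Interior estimates then bound $(U^n)'$ and $(V^n)'$ on each fixed $[-m,m]$ for $n\ge m+1$. For the first derivative, integrate the equation or use the standard interpolation $|w'|\le C(\|w\|_\infty+\|w''\|_\infty)$ on unit intervals. Differentiating the equations bootstraps this to uniform $C^k$ bounds.

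Arzel\`a--Ascoli and a diagonal argument give a subsequence converging in $C^2_{\rm loc}$ to $(U,V)$. This limit solves \eqref{eq:TW} on $\R$ and inherits the pointwise bounds. Smoothness follows by bootstrapping, since the nonlinearities are polynomial. Finally, strict positivity is immediate from $U_i\ge\underline U_i>0$ and $V_i\ge\underline V_i>0$. Note that $s>0$ plays no essential role beyond fixing the sign conventions in the linear operator.
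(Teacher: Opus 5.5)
\textbf{Gap in Step 2.} The monotonicity of your iterates and their confinement to the order box are fine. The claim that their limits are solutions of \eqref{eq:TW} on $(-n,n)$ is not.

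The system is only mixed quasimonotone, and the plant--consumer coupling has the predator--prey sign pattern: $F_i$ is nonincreasing in $V_i$, while $G_i$ is nondecreasing in $U_i$. Reversing the order on either variable flips both signs, so no choice of order makes the system cooperative, and there is no single order-preserving map to iterate. What you describe is the coupled iteration used for mixed quasimonotone systems (as in Pao's theory). Its limits are only coupled quasi-solutions. Write $(\widehat U,\widehat V)$ for the limits of the upper sequences and $(\check U,\check V)$ for the limits of the lower ones. Then
\begin{gather*}
d_i\widehat U_i''-s\widehat U_i'+r_i\widehat U_i\Bigl(1-\widehat U_i-\kappa_{ii}\check V_i-\sum_{j\ne i}\kappa_{ij}\check U_j\Bigr)=0,\\
d_i\check U_i''-s\check U_i'+r_i\check U_i\Bigl(1-\check U_i-\kappa_{ii}\widehat V_i-\sum_{j\ne i}\kappa_{ij}\widehat U_j\Bigr)=0,
\end{gather*}
while $\widehat V_i$ and $\check V_i$ solve the consumer equation with $\widehat U_i$ and $\check U_i$, respectively. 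However you group these four limits into a pair $(U,V)$, some equation is evaluated at the wrong member. You therefore get a solution only if $\widehat U=\check U$ and $\widehat V=\check V$, and nothing in your argument gives this. Worse, with your boundary data (upper values for the upper sequence, lower values for the lower one), $\widehat U_i(\pm n)=\overline U_i(\pm n)$ and $\check U_i(\pm n)=\underline U_i(\pm n)$. So the limits cannot coincide as soon as the barriers differ at $\pm n$, which is the case for the strictly ordered barriers of Lemma~\ref{lem:ordered-barriers}.

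This is why the paper, following Ma and the four-component construction it cites, uses Schauder's theorem rather than iteration. By mixed quasimonotonicity, for any single $(U,V)$ in the order box, $F_i(U,V)+\beta U_i$ and $G_i(U,V)+\beta V_i$ lie between their values at the coupled barriers. Together with (H3)--(H4), this makes the closed convex box invariant under the shifted Green operator, and a fixed point exists without monotone sequences.

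Your argument can be repaired the same way. Fix one boundary datum for every application of your map $\Phi$, say $\overline U_i(\pm n)$ and $\overline V_i(\pm n)$. The maximum-principle computation you sketch (with corners treated as signed Dirac masses) then shows that $\Phi$ maps the box in $C([-n,n])^{2N}$ into itself. Since $\Phi$ is continuous and compact there, Schauder gives a solution $(U^n,V^n)$ in the box. Your Step 3 then goes through unchanged. The result is a bounded-interval version of the paper's whole-line argument: it replaces the exponential-kernel estimates on $\mathbb R$ with a diagonal compactness step.
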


\begin{proof}
The result follows from the Schauder fixed-point construction for
generalized upper and lower solutions developed in
\cite[Sections~4.1--4.2 and Appendix~A]{chen2025savanna};
see also \cite[Theorem~2.2]{ma2001traveling} for the underlying
fixed-point framework. Although the construction in \cite{chen2025savanna} is written for a
four-component system, the argument is componentwise and extends
without change to any finite number of components. In the present
setting, the mixed quasimonotonicity of the reaction terms and the
generalized derivative-jump conditions imply that the closed order
interval determined by $(\underline U,\underline V)$ and $(\overline U,\overline V)$ is invariant under the associated Green operator. The Schauder
fixed-point theorem therefore yields a solution
\[
(U,V)\in C^2(\mathbb R,\mathbb R^{2N})
\]
satisfying
\[
(\underline U,\underline V)
\leq
(U,V)
\leq
(\overline U,\overline V).
\]
Since the right-hand side of \eqref{eq:TW} is polynomial, standard
ODE regularity implies
\[
(U,V)\in C^\infty(\mathbb R,\mathbb R^{2N}).
\]
Strict positivity follows from the strict positivity of the lower
profiles.
\end{proof}

\subsection{Characteristic roots and upper solutions}

Fix $i\in\I$ and define
\begin{equation}
\label{eq:pi}
 p_i(z):=d_i z^2-sz+r_i.
\end{equation}
Since $s>s_{\mathrm P}$,
\begin{equation}
\label{eq:discriminant-positive}
 s^2-4d_ir_i>0.
\end{equation}
Thus $p_i$ has two distinct positive roots
\begin{equation}
\label{eq:lambda-pm}
 \lambda_i^-:=\frac{s-\sqrt{s^2-4d_ir_i}}{2d_i},
 \qquad
 \lambda_i^+:=\frac{s+\sqrt{s^2-4d_ir_i}}{2d_i},
\end{equation}
with
\begin{equation}
\label{eq:lambda-order}
 0<\lambda_i^-<\lambda_i^+,
 \qquad
 p_i(\lambda_i^-)=p_i(\lambda_i^+)=0.
\end{equation}
For the consumer upper profile, set
\begin{equation}
\label{eq:nu}
 \nu_i:=\min\left\{\lambda_i^-,\frac{s}{D_i}\right\}>0.
\end{equation}
Define
\begin{equation}
\label{eq:upper-U}
 \overline U_i(\xi):=
 \begin{cases}
 e^{\lambda_i^-\xi},&\xi\le0,\\[1mm]
 1,&\xi\ge0,
 \end{cases}
\end{equation}
and
\begin{equation}
\label{eq:upper-V}
 \overline V_i(\xi):=
 \begin{cases}
 e^{\nu_i\xi},&\xi\le0,\\[1mm]
 1,&\xi\ge0.
 \end{cases}
\end{equation}
Both are positive, bounded, continuous, and piecewise $C^2$. At $\xi=0$,
\begin{equation}
\label{eq:upper-corners}
 \overline U_i'(0^+)=0\le \lambda_i^-=\overline U_i'(0^-),
 \qquad
 \overline V_i'(0^+)=0\le \nu_i=\overline V_i'(0^-).
\end{equation}

\begin{lemma}[Upper inequalities]
\label{lem:upper}
The functions \eqref{eq:upper-U}--\eqref{eq:upper-V} satisfy
\eqref{eq:upper-U-def} and \eqref{eq:upper-V-def} for every ordered positive lower pair satisfying
\[
 0<\underline U_j\le\overline U_j,
 \qquad
 0<\underline V_i\le\overline V_i.
\]
\end{lemma}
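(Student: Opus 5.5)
We verify \eqref{eq:upper-U-def} and \eqref{eq:upper-V-def} separately on the two half-lines $\xi<0$ and $\xi>0$. The corner at $\xi=0$ is already handled by \eqref{eq:upper-corners}. The idea throughout is to discard the favourable nonlinear terms using positivity, and then use the characteristic relation for the remaining linear part.

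\emph{Plant inequality.} For every $\xi$, the lower components are positive, so $\kappa_{ii}\underline V_i\ge0$ and $\kappa_{ij}\underline U_j\ge0$. Hence the reaction term is at most $r_i\overline U_i(1-\overline U_i)$, and it suffices to check
\[
d_i\overline U_i''-s\overline U_i'+r_i\overline U_i(1-\overline U_i)\le0 .
\]
On $\xi>0$ we have $\overline U_i\equiv1$, so the left side equals $r_i\cdot1\cdot0=0$. On $\xi<0$, substituting $\overline U_i=e^{\lambda_i^-\xi}$ gives
\[
d_i\overline U_i''-s\overline U_i'+r_i\overline U_i(1-\overline U_i)
=e^{\lambda_i^-\xi}p_i(\lambda_i^-)-r_ie^{2\lambda_i^-\xi}
=-r_ie^{2\lambda_i^-\xi}<0,
\]
using \eqref{eq:lambda-order}. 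This step uses only positivity of the lower pair, not the ordering $\underline U_j\le\overline U_j$.

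\emph{Consumer inequality.} On $\xi>0$ we have $\overline V_i=\overline U_i=1$, so the left side of \eqref{eq:upper-V-def} is $e_i(1-1)=0$. On $\xi<0$, write $\overline V_i=e^{\nu_i\xi}$. The left side of \eqref{eq:upper-V-def} becomes
\[
e^{\nu_i\xi}\bigl[(D_i\nu_i^2-s\nu_i)+e_i(e^{\lambda_i^-\xi}-e^{\nu_i\xi})\bigr].
\]
Since $0<\nu_i\le s/D_i$, we have $D_i\nu_i^2-s\nu_i=\nu_i(D_i\nu_i-s)\le0$. Since $\nu_i\le\lambda_i^-$ and $\xi<0$, we have $e^{\lambda_i^-\xi}\le e^{\nu_i\xi}$, so the second bracket term is $\le0$. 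Both terms are nonpositive, which gives the inequality.

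\emph{Main obstacle.} The only delicate point is the consumer inequality on $\xi<0$. There the equation contains the \emph{upper} plant $\overline U_i$ rather than a lower one, so the coupling term $e_i\overline V_i\overline U_i$ cannot be discarded by positivity. It must instead be dominated by the self-limitation term $-e_i\overline V_i^2$. This is exactly what the choice $\nu_i=\min\{\lambda_i^-,s/D_i\}$ in \eqref{eq:nu} is designed to ensure: $\nu_i\le\lambda_i^-$ makes $\overline V_i$ decay no faster than $\overline U_i$, so $\overline U_i\le\overline V_i$ on $\xi<0$. The constraint $\nu_i\le s/D_i$ keeps the linear part nonpositive.
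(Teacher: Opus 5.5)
Your proof is correct and follows essentially the same route as the paper. For the plant inequality you use $p_i(\lambda_i^-)=0$ on $\xi\le0$ and the constant value $1$ on $\xi\ge0$, and for the consumer inequality the two facts $\nu_i\le s/D_i$ and $\nu_i\le\lambda_i^-$ (so that $\overline U_i\le\overline V_i$ on $\xi\le0$). The only cosmetic difference is that you drop the nonnegative coupling terms $\kappa_{ii}\underline V_i+\sum_{j\ne i}\kappa_{ij}\underline U_j$ before computing, whereas the paper carries them along and reads off the sign at the end.
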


\begin{proof}
For $\xi\le0$, using $p_i(\lambda_i^-)=0$ gives
\begin{align*}
&d_i\overline U_i''-s\overline U_i'
+r_i\overline U_i
\left(1-\overline U_i-\kappa_{ii}\underline V_i
-\sum_{j\ne i}\kappa_{ij}\underline U_j\right)=-r_i\overline U_i
\left(\overline U_i+\kappa_{ii}\underline V_i
+\sum_{j\ne i}\kappa_{ij}\underline U_j\right)
\le0.
\end{align*}
For $\xi\ge0$, $\overline U_i=1$, hence
\begin{align*}
&d_i\overline U_i''-s\overline U_i'
+r_i\overline U_i
\left(1-\overline U_i-\kappa_{ii}\underline V_i
-\sum_{j\ne i}\kappa_{ij}\underline U_j\right)=-r_i\left(\kappa_{ii}\underline V_i
+\sum_{j\ne i}\kappa_{ij}\underline U_j\right)
\le0.
\end{align*}
This proves \eqref{eq:upper-U-def}.

For $\xi\le0$, $\nu_i\le s/D_i$ gives
\[
 D_i\nu_i^2-s\nu_i=\nu_i(D_i\nu_i-s)\le0.
\]
Also $\nu_i\le\lambda_i^-$, and since $\xi\le0$,
\[
 \overline U_i(\xi)=e^{\lambda_i^-\xi}
 \le e^{\nu_i\xi}=\overline V_i(\xi).
\]
Therefore
\begin{align*}
&D_i\overline V_i''-s\overline V_i'
+e_i\overline V_i(\overline U_i-\overline V_i)=\nu_i(D_i\nu_i-s)e^{\nu_i\xi}
+e_ie^{\nu_i\xi}
\left(e^{\lambda_i^-\xi}-e^{\nu_i\xi}\right)
\le0.
\end{align*}
For $\xi\ge0$, $\overline U_i=\overline V_i=1$, so the left-hand side vanishes. This proves \eqref{eq:upper-V-def}.
\end{proof}

\subsection{Plant lower solution: bump plus logistic bridge}
Next, let us construct the lower solution for plants.
\noindent
{\bf Choice of the bump exponent.} For $i\in\I$, let
\begin{equation}
\label{eq:Ji}
 \I_i^+:=\{j\in\I:\ j\ne i,\ \kappa_{ij}>0\}.
\end{equation}
Define
\begin{equation}
\label{eq:Mi}
 M_i:=\min\left\{
 2,
 \frac{\lambda_i^+}{\lambda_i^-},
 1+\frac{\nu_i}{\lambda_i^-},
 \min_{j\in\I_i^+}\left(1+\frac{\lambda_j^-}{\lambda_i^-}\right)
 \right\},
\end{equation}
where the last minimum is omitted if $\I_i^+=\varnothing$. Every number in \eqref{eq:Mi} is strictly larger than $1$, hence
\begin{equation}
\label{eq:Mi-greater-one}
 M_i>1.
\end{equation}
Choose, for example,
\begin{equation}
\label{eq:phi-explicit}
 \varphi_i:=\frac{1+M_i}{2}.
\end{equation}
Then
\begin{equation}
\label{eq:phi-range}
 1<\varphi_i<M_i.
\end{equation}
In particular,
\begin{equation}
\label{eq:phi-root-range}
 \lambda_i^-<\varphi_i\lambda_i^-<\lambda_i^+.
\end{equation}
Since $p_i$ is negative strictly between its two roots,
\begin{equation}
\label{eq:ci}
 c_i:=-p_i(\varphi_i\lambda_i^-)>0.
\end{equation}

The other restrictions in \eqref{eq:Mi} imply
\begin{equation}
\label{eq:exponent-relations}
 \varphi_i\lambda_i^-<2\lambda_i^-,
 \qquad
 \varphi_i\lambda_i^-<\lambda_i^-+\nu_i,
\end{equation}
and, whenever $j\in\I_i^+$,
\begin{equation}
\label{eq:cross-exponent-relation}
 \varphi_i\lambda_i^-<\lambda_i^-+\lambda_j^-.
\end{equation}
These inequalities are the exact exponent comparisons needed below.

\noindent
{\bf Choice and geometry of the bump.} Choose
\begin{equation}
\label{eq:Ai-choice}
 A_i>\max\left\{1,\frac{r_i(2-\sigma_i)}{c_i}\right\}.
\end{equation}
Define
\begin{equation}
\label{eq:fi}
 f_i(\xi):=e^{\lambda_i^-\xi}
 -A_ie^{\varphi_i\lambda_i^-\xi}.
\end{equation}
The unique zero of $f_i$ is
\begin{equation}
\label{eq:zi}
 z_i:=-\frac{\log A_i}{(\varphi_i-1)\lambda_i^-}<0.
\end{equation}
The unique critical point of $f_i$ is
\begin{equation}
\label{eq:ami}
 a_i^{\mathrm m}:=-\frac{\log(\varphi_iA_i)}
 {(\varphi_i-1)\lambda_i^-},
\end{equation}
and
\begin{equation}
\label{eq:am-z-order}
 a_i^{\mathrm m}<z_i<0.
\end{equation}
Indeed,
\begin{equation}
\label{eq:fi-prime}
 f_i'(\xi)
 =\lambda_i^-e^{\lambda_i^-\xi}
 \left(1-\varphi_iA_ie^{(\varphi_i-1)\lambda_i^-\xi}\right).
\end{equation}
Consequently,
\begin{equation}
\label{eq:fi-geometry}
 \begin{cases}
 f_i(\xi)>0,&\xi<z_i,\\
 f_i'(\xi)>0,&\xi<a_i^{\mathrm m},\\
 f_i'(\xi)<0,&a_i^{\mathrm m}<\xi<z_i.
 \end{cases}
\end{equation}
The maximum value is positive and is given explicitly by
\begin{equation}
\label{eq:fi-max}
 f_i(a_i^{\mathrm m})
 =\frac{\varphi_i-1}{\varphi_i}
 \left(\frac{1}{\varphi_iA_i}\right)^{1/(\varphi_i-1)}>0.
\end{equation}

\noindent
{\bf Choice of the two positive tail levels.} Since $s<s_{\mathrm C}^{\mathrm{ex}}$, for every $i\in\I$,
\begin{equation}
\label{eq:theta-sigma}
 \theta_i:=\frac{s^2}{4D_ie_i}<\sigma_i.
\end{equation}
It is useful to make the choice explicit:
\begin{equation}
\label{eq:delta-explicit}
 \delta_i^0:=\frac{2\theta_i+\sigma_i}{3},
 \qquad
 \delta_i^\infty:=\frac{\theta_i+2\sigma_i}{3}.
\end{equation}
Then
\begin{equation}
\label{eq:delta-order}
 \theta_i<\delta_i^0<\delta_i^\infty<\sigma_i.
\end{equation}
The level $\delta_i^0$ will be used in the consumer inequality. The larger level $\delta_i^\infty$ will be the limiting value of the plant bridge.

\noindent
{\bf Choice of the junction point.} Define
\begin{equation}
\label{eq:mi}
 m_i:=\frac12\min\left\{\delta_i^0,f_i(a_i^{\mathrm m})\right\}.
\end{equation}
Then
\begin{equation}
\label{eq:mi-bounds}
 0<m_i<\delta_i^0,
 \qquad
 0<m_i<f_i(a_i^{\mathrm m}).
\end{equation}
Because $f_i$ is continuous and strictly decreasing from $f_i(a_i^{\mathrm m})$ to $0$ on $(a_i^{\mathrm m},z_i)$, there exists a unique
\begin{equation}
\label{eq:ai-location}
 a_i\in(a_i^{\mathrm m},z_i)
\end{equation}
such that
\begin{equation}
\label{eq:ai-def}
 f_i(a_i)=m_i.
\end{equation}
It follows that
\begin{equation}
\label{eq:ai-properties}
 a_i<0,
 \qquad
 f_i'(a_i)<0,
 \qquad
 f_i(\xi)>0\quad(\xi\le a_i).
\end{equation}
Notice that $m_i$ is chosen below the bump maximum automatically. No consumer-compatible plateau is required to lie below that maximum.

\noindent
{\bf Choice of the logistic rate.} Set
\begin{equation}
\label{eq:gap-ri}
 R_i:=r_i(\sigma_i-\delta_i^\infty)>0.
\end{equation}
Choose $\gamma_i>0$ such that
\begin{equation}
\label{eq:gamma-choice}
 0<\gamma_i<\gamma_i^*:=
 \min\left\{
 \frac{\lambda_i^-}{\delta_i^\infty},
 \frac{R_i}{2s\delta_i^\infty},
 \frac{1}{\delta_i^\infty}\sqrt{\frac{R_i}{2d_i}}
 \right\}.
\end{equation}
This interval is nonempty because every entry in the minimum is positive. The choice implies
\begin{equation}
\label{eq:gamma-order-condition}
 \gamma_i\delta_i^\infty<\lambda_i^-,
\end{equation}
\begin{equation}
\label{eq:gamma-linear-bound}
 s\gamma_i\delta_i^\infty<\frac{R_i}{2},
\end{equation}
and
\begin{equation}
\label{eq:gamma-quadratic-bound}
 d_i\gamma_i^2(\delta_i^\infty)^2<\frac{R_i}{2}.
\end{equation}
Hence
\begin{equation}
\label{eq:gamma-main}
 s\gamma_i\delta_i^\infty
 +d_i\gamma_i^2(\delta_i^\infty)^2
 <R_i=r_i(\sigma_i-\delta_i^\infty).
\end{equation}

\subsection{Definition of the bridge and the plant lower solution}

Let $h_i$ solve
\begin{equation}
\label{eq:logistic-ode}
 h_i'=\gamma_i h_i(\delta_i^\infty-h_i),
 \qquad
 h_i(a_i)=m_i.
\end{equation}
Its explicit formula is
\begin{equation}
\label{eq:hi-explicit}
 h_i(\xi)
 =\frac{\delta_i^\infty}
 {1+\left(\frac{\delta_i^\infty}{m_i}-1\right)
 e^{-\gamma_i\delta_i^\infty(\xi-a_i)}}.
\end{equation}
For $\xi\ge a_i$,
\begin{equation}
\label{eq:hi-properties}
 m_i\le h_i(\xi)<\delta_i^\infty,
 \qquad
 h_i'(\xi)>0,
 \qquad
 \lim_{\xi\to+\infty}h_i(\xi)=\delta_i^\infty.
\end{equation}
Define
\begin{equation}
\label{eq:lower-U}
 \underline U_i(\xi):=
 \begin{cases}
 f_i(\xi),&\xi\le a_i,\\[1mm]
 h_i(\xi),&\xi\ge a_i.
 \end{cases}
\end{equation}
By \eqref{eq:ai-def} and \eqref{eq:logistic-ode},
\begin{equation}
\label{eq:lower-U-continuity-corner}
 f_i(a_i)=h_i(a_i)=m_i,
 \qquad
 f_i'(a_i)<0<h_i'(a_i).
\end{equation}
Thus $\underline U_i$ is continuous, positive, piecewise $C^2$, and satisfies the required upward derivative jump at $a_i$.

\begin{lemma}[Ordering of the plant profiles]
\label{lem:plant-ordering}
For every $i\in\I$,
\begin{equation}
\label{eq:plant-ordering}
 0<\underline U_i(\xi)<\overline U_i(\xi)
 \qquad (\xi\in\R).
\end{equation}
\end{lemma}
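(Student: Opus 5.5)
The plan is to compare the two profiles on the three intervals determined by the corner points $a_i<0$ (of $\underline U_i$) and $0$ (of $\overline U_i$): namely $(-\infty,a_i]$, $[a_i,0]$ and $[0,\infty)$.

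\emph{Positivity.} On $(-\infty,a_i]$ we have $\underline U_i=f_i>0$ by \eqref{eq:ai-properties}. On $[a_i,\infty)$ we have $\underline U_i=h_i\ge m_i>0$ by \eqref{eq:hi-properties}. So positivity is immediate.

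\emph{The region $\xi\le a_i$.} Since $a_i<0$, here $\overline U_i=e^{\lambda_i^-\xi}$. Then
\[
\underline U_i=f_i=e^{\lambda_i^-\xi}-A_ie^{\varphi_i\lambda_i^-\xi}<e^{\lambda_i^-\xi}=\overline U_i,
\]
because $A_i>0$. This gives the strict inequality on this region.

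\emph{The region $\xi\ge0$.} Here $\overline U_i=1$, while $\underline U_i=h_i<\delta_i^\infty<\sigma_i\le1$ by \eqref{eq:hi-properties} and \eqref{eq:delta-order}.

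\emph{The region $a_i\le\xi\le0$ (main step).} Here we must show
\[
h_i(\xi)<e^{\lambda_i^-\xi}.
\]
At $\xi=a_i$ this holds, since $h_i(a_i)=m_i=f_i(a_i)<e^{\lambda_i^-a_i}$. I would set $g(\xi):=\log h_i(\xi)-\lambda_i^-\xi$. Then
\[
g'(\xi)=\frac{h_i'}{h_i}-\lambda_i^-=\gamma_i(\delta_i^\infty-h_i)-\lambda_i^-<\gamma_i\delta_i^\infty-\lambda_i^-<0
\]
by \eqref{eq:gamma-order-condition}. So $g$ is strictly decreasing. Together with $g(a_i)<0$, this gives $g<0$ on $[a_i,\infty)$, i.e.\ $h_i<e^{\lambda_i^-\xi}$ on $[a_i,0]$. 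In fact this argument covers all $\xi\ge a_i$ against the exponential, but on $\xi\ge0$ we only need the comparison with $1$, which was done above.

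This is the one place where a parameter choice matters: the condition $\gamma_i\delta_i^\infty<\lambda_i^-$ in \eqref{eq:gamma-choice} is exactly what keeps the logistic bridge from growing faster than the upper exponential. Beyond that, nothing else is delicate.
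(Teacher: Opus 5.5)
Your proposal is correct and matches the paper's proof: it uses the same three-interval split, and your $g=\log h_i-\lambda_i^-\xi$ is the logarithm of the paper's ratio $\mathcal R_i=h_i/e^{\lambda_i^-\xi}$. The argument on $[a_i,0]$ rests on the same two facts as the paper's, namely the initial value $h_i(a_i)=f_i(a_i)<e^{\lambda_i^-a_i}$ and the rate condition $\gamma_i\delta_i^\infty<\lambda_i^-$.
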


\begin{proof}
For $\xi\le a_i$, positivity follows from \eqref{eq:ai-properties}, and
\[
 f_i(\xi)
 =e^{\lambda_i^-\xi}-A_ie^{\varphi_i\lambda_i^-\xi}
 <e^{\lambda_i^-\xi}
 =\overline U_i(\xi).
\]
For $a_i\le\xi\le0$, define
\begin{equation}
\label{eq:ratio-Ri}
 \mathcal R_i(\xi):=\frac{h_i(\xi)}{e^{\lambda_i^-\xi}}.
\end{equation}
Then
\begin{align}
\label{eq:ratio-derivative}
 \frac{\mathcal R_i'}{\mathcal R_i}
 &=\frac{h_i'}{h_i}-\lambda_i^-
 =\gamma_i(\delta_i^\infty-h_i)-\lambda_i^-
 \le\gamma_i\delta_i^\infty-\lambda_i^-<0
\end{align}
by \eqref{eq:gamma-order-condition}. Moreover,
\[
 \mathcal R_i(a_i)
 =\frac{f_i(a_i)}{e^{\lambda_i^-a_i}}<1.
\]
Hence $\mathcal R_i(\xi)<1$ for $a_i\le\xi\le0$, and therefore
\[
 h_i(\xi)<e^{\lambda_i^-\xi}=\overline U_i(\xi).
\]
Finally, for $\xi\ge0$,
\[
 0<h_i(\xi)<\delta_i^\infty<\sigma_i\le1=\overline U_i(\xi).
\]
This proves \eqref{eq:plant-ordering}.
\end{proof}

\begin{lemma}[Lower plant differential inequality]
\label{lem:lower-plant}
The profile \eqref{eq:lower-U} satisfies \eqref{eq:lower-U-def} away from $a_i$.
\end{lemma}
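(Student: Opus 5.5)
The inequality \eqref{eq:lower-U-def} will be verified separately on the two smooth pieces $\xi<a_i$ and $\xi>a_i$. The corner condition at $a_i$ is already recorded in \eqref{eq:lower-U-continuity-corner}. On each piece I will bound the nonlinear term from below using only the explicit upper profiles \eqref{eq:upper-U}--\eqref{eq:upper-V}. This is legitimate because the lower inequality involves $\overline V_i$ and $\overline U_j$ with a minus sign.

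\emph{Bump region $\xi<a_i$.} Here $\underline U_i=f_i$. The linear part is computed exactly from $p_i(\lambda_i^-)=0$ and \eqref{eq:ci}:
\[
d_if_i''-sf_i'+r_if_i=p_i(\lambda_i^-)e^{\lambda_i^-\xi}-A_ip_i(\varphi_i\lambda_i^-)e^{\varphi_i\lambda_i^-\xi}=A_ic_ie^{\varphi_i\lambda_i^-\xi}.
\]
For the remaining term, $\xi<a_i<0$ gives $0<f_i\le e^{\lambda_i^-\xi}$, $\overline V_i=e^{\nu_i\xi}$ and $\overline U_j=e^{\lambda_j^-\xi}$. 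Hence
\[
r_if_i\Bigl(f_i+\kappa_{ii}\overline V_i+\sum_{j\ne i}\kappa_{ij}\overline U_j\Bigr)\le r_i\Bigl(e^{2\lambda_i^-\xi}+\kappa_{ii}e^{(\lambda_i^-+\nu_i)\xi}+\sum_{j\in\I_i^+}\kappa_{ij}e^{(\lambda_i^-+\lambda_j^-)\xi}\Bigr).
\]
Terms with $\kappa_{ij}=0$ drop out, which is why only $j\in\I_i^+$ appears. By \eqref{eq:exponent-relations}--\eqref{eq:cross-exponent-relation}, each exponent on the right is strictly larger than $\varphi_i\lambda_i^-$. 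Since $\xi<0$, each exponential is therefore at most $e^{\varphi_i\lambda_i^-\xi}$. The right-hand side is thus bounded by
\[
r_i(1+\rho_i)e^{\varphi_i\lambda_i^-\xi}=r_i(2-\sigma_i)e^{\varphi_i\lambda_i^-\xi}.
\]
The choice \eqref{eq:Ai-choice} gives $A_ic_i>r_i(2-\sigma_i)$, so the full expression is $\ge\bigl(A_ic_i-r_i(2-\sigma_i)\bigr)e^{\varphi_i\lambda_i^-\xi}>0$. This exponent bookkeeping is the only delicate point. It is exactly what the definition \eqref{eq:Mi} of $M_i$ was designed for, so I expect no genuine obstacle.

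\emph{Bridge region $\xi>a_i$.} Here $\underline U_i=h_i$. From \eqref{eq:logistic-ode},
\[
h_i'=\gamma_ih_i(\delta_i^\infty-h_i),\qquad h_i''=\gamma_i^2h_i(\delta_i^\infty-h_i)(\delta_i^\infty-2h_i).
\]
Since $0<h_i<\delta_i^\infty$, we have $0<\delta_i^\infty-h_i\le\delta_i^\infty$ and $|\delta_i^\infty-2h_i|\le\delta_i^\infty$. Hence $|h_i''|\le\gamma_i^2(\delta_i^\infty)^2h_i$ and $0<h_i'\le\gamma_i\delta_i^\infty h_i$. All upper profiles are bounded by $1$, so the reaction bracket satisfies
\[
1-h_i-\kappa_{ii}\overline V_i-\sum_{j\ne i}\kappa_{ij}\overline U_j\ge1-\delta_i^\infty-\rho_i=\sigma_i-\delta_i^\infty.
\]
Combining these bounds,
\[
d_ih_i''-sh_i'+r_ih_i(\cdots)\ge h_i\Bigl[r_i(\sigma_i-\delta_i^\infty)-s\gamma_i\delta_i^\infty-d_i\gamma_i^2(\delta_i^\infty)^2\Bigr].
\]
This is strictly positive by \eqref{eq:gamma-main}, which completes the verification of \eqref{eq:lower-U-def} away from $a_i$.
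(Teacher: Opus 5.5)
Your proof is correct and follows the paper's argument essentially step for step. You use the same split at $a_i$ and the exact identity $d_if_i''-sf_i'+r_if_i=A_ic_ie^{\varphi_i\lambda_i^-\xi}$. You apply the exponent comparisons \eqref{eq:exponent-relations}--\eqref{eq:cross-exponent-relation} to bound the nonlinear term by $r_i(2-\sigma_i)e^{\varphi_i\lambda_i^-\xi}$, which \eqref{eq:Ai-choice} absorbs. On the bridge you use the same coarse bounds on $h_i'$, $h_i''$ and the reaction bracket, and close with \eqref{eq:gamma-main}; bounding $|h_i''|$ rather than only $h_i''$ from below is slightly more than needed but harmless.
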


\begin{proof}
We treat the two pieces separately.

\emph{Step 1: the left bump $\xi<a_i$.}
Since $p_i(\lambda_i^-)=0$ and $c_i=-p_i(\varphi_i\lambda_i^-)$,
\begin{equation}
\label{eq:fi-linear}
 d_if_i''-sf_i'+r_if_i
 =A_ic_i e^{\varphi_i\lambda_i^-\xi}.
\end{equation}
Also $0<f_i(\xi)<e^{\lambda_i^-\xi}$ for $\xi\le a_i<0$. By \eqref{eq:exponent-relations}, and because $\xi\le0$,
\begin{equation}
\label{eq:fi-square-estimate}
 f_i(\xi)^2
 \le e^{2\lambda_i^-\xi}
 \le e^{\varphi_i\lambda_i^-\xi},
\end{equation}
\begin{equation}
\label{eq:fi-Vbar-estimate}
 f_i(\xi)\overline V_i(\xi)
 \le e^{(\lambda_i^-+\nu_i)\xi}
 \le e^{\varphi_i\lambda_i^-\xi},
\end{equation}
and, whenever $j\in\I_i^+$,
\begin{equation}
\label{eq:fi-Ujbar-estimate}
 f_i(\xi)\overline U_j(\xi)
 \le e^{(\lambda_i^-+\lambda_j^-)\xi}
 \le e^{\varphi_i\lambda_i^-\xi}.
\end{equation}
Terms with $\kappa_{ij}=0$ contribute nothing. Therefore
\begin{align}
\label{eq:nonlinear-bump-bound}
&f_i\left(f_i+\kappa_{ii}\overline V_i
+\sum_{j\ne i}\kappa_{ij}\overline U_j\right)\le
\left(1+\kappa_{ii}+\sum_{j\ne i}\kappa_{ij}\right)
 e^{\varphi_i\lambda_i^-\xi}
=(2-\sigma_i)e^{\varphi_i\lambda_i^-\xi}.
\end{align}
Combining \eqref{eq:fi-linear} and \eqref{eq:nonlinear-bump-bound},
\begin{align*}
&d_if_i''-sf_i'
+r_if_i\left(1-f_i-\kappa_{ii}\overline V_i
-\sum_{j\ne i}\kappa_{ij}\overline U_j\right)\ge
\left[A_ic_i-r_i(2-\sigma_i)\right]
 e^{\varphi_i\lambda_i^-\xi}>0
\end{align*}
by \eqref{eq:Ai-choice}.

\emph{Step 2: the logistic bridge $\xi>a_i$.}
Since every upper component is bounded above by $1$,
\begin{align}
\label{eq:bridge-reaction-lower}
1-h_i-\kappa_{ii}\overline V_i
-\sum_{j\ne i}\kappa_{ij}\overline U_j
&\ge1-h_i-\sum_{j\in\I}\kappa_{ij}
=\sigma_i-h_i.
\end{align}
Differentiating \eqref{eq:logistic-ode},
\begin{equation}
\label{eq:hi-second}
 h_i''=\gamma_i^2h_i(\delta_i^\infty-h_i)
 (\delta_i^\infty-2h_i).
\end{equation}
Because $0<h_i<\delta_i^\infty$,
\begin{equation}
\label{eq:hi-second-coarse}
 h_i''\ge-\gamma_i^2(\delta_i^\infty)^2h_i,
\end{equation}
\begin{equation}
\label{eq:hi-first-coarse}
 -h_i'\ge-\gamma_i\delta_i^\infty h_i,
\end{equation}
and
\begin{equation}
\label{eq:reaction-gap}
 \sigma_i-h_i\ge\sigma_i-\delta_i^\infty.
\end{equation}
Using \eqref{eq:bridge-reaction-lower}--\eqref{eq:reaction-gap},
\begin{align*}
&d_ih_i''-sh_i'
+r_ih_i\left(1-h_i-\kappa_{ii}\overline V_i
-\sum_{j\ne i}\kappa_{ij}\overline U_j\right)\ge h_i\left[
 r_i(\sigma_i-\delta_i^\infty)
 -s\gamma_i\delta_i^\infty
 -d_i\gamma_i^2(\delta_i^\infty)^2
 \right]>0
\end{align*}
by \eqref{eq:gamma-main}. This proves the lower plant inequality on both smooth pieces.
\end{proof}

\subsection{Consumer lower solution: Gaussian tail plus constant} Let us construct the lower solution for consumers. 

\noindent
{\bf The bridge crossing point.} Since $h_i(a_i)=m_i<\delta_i^0$ and
\[
 \lim_{\xi\to+\infty}h_i(\xi)=\delta_i^\infty>\delta_i^0,
\]
and since $h_i$ is strictly increasing, there exists a unique $b_i>a_i$ such that
\begin{equation}
\label{eq:bi-def}
 h_i(b_i)=\delta_i^0.
\end{equation}
The point is explicitly
\begin{equation}
\label{eq:bi-explicit}
 b_i=a_i+\frac{1}{\gamma_i\delta_i^\infty}
 \log\left(
 \frac{\delta_i^0(\delta_i^\infty-m_i)}
 {m_i(\delta_i^\infty-\delta_i^0)}
 \right).
\end{equation}
The logarithm is positive because
\[
 \delta_i^0(\delta_i^\infty-m_i)
 -m_i(\delta_i^\infty-\delta_i^0)
 =\delta_i^\infty(\delta_i^0-m_i)>0.
\]
Therefore $b_i>a_i$, and
\begin{equation}
\label{eq:h-lower-after-b}
 h_i(\xi)\ge\delta_i^0
 \qquad(\xi\ge b_i).
\end{equation}

\noindent
{\bf Choice of the Gaussian height and width.} Define the positive gap
\begin{equation}
\label{eq:Gi}
 G_i:=e_i\delta_i^0-\frac{s^2}{4D_i}>0,
\end{equation}
where positivity follows from $\delta_i^0>\theta_i=s^2/(4D_ie_i)$. Choose explicitly
\begin{equation}
\label{eq:vi-choice}
 v_i:=\frac12\min\left\{1,\delta_i^0,\frac{G_i}{4e_i}\right\}>0,
\end{equation}
and
\begin{equation}
\label{eq:beta-choice}
 \beta_i:=\frac{G_i}{16D_i}>0.
\end{equation}
Then
\begin{equation}
\label{eq:vi-basic}
 v_i<1,
 \qquad
 v_i<\delta_i^0,
 \qquad
 e_iv_i\le\frac{G_i}{8},
\end{equation}
and
\begin{equation}
\label{eq:beta-basic}
 2D_i\beta_i=\frac{G_i}{8}.
\end{equation}
Consequently,
\begin{align}
\label{eq:consumer-gap}
&-2D_i\beta_i-\frac{s^2}{4D_i}
+e_i(\delta_i^0-v_i)=G_i-2D_i\beta_i-e_iv_i
\ge G_i-\frac{G_i}{8}-\frac{G_i}{8}
=\frac{3G_i}{4}>0.
\end{align}

\noindent
{\bf Choice of the Gaussian center.} For $y\in\R$, define
\begin{equation}
\label{eq:Qi}
 Q_i(y):=4D_i\beta_i^2y^2+2s\beta_i y
 -2D_i\beta_i-e_iv_i.
\end{equation}
Choose
\begin{equation}
\label{eq:Li-choice}
\begin{aligned}
 L_i:=1+\max\Bigg\{
 &\frac{s}{D_i\beta_i},
 \sqrt{\frac{2D_i\beta_i+e_iv_i}{2D_i\beta_i^2}},
 -b_i,
 \frac{\nu_i}{2\beta_i}-b_i,
 0
 \Bigg\}.
\end{aligned}
\end{equation}
Then
\begin{equation}
\label{eq:L-properties}
 L_i>\frac{s}{D_i\beta_i}>
 \frac{s}{4D_i\beta_i},
 \qquad
 b_i+L_i>0,
 \qquad
 b_i+L_i>\frac{\nu_i}{2\beta_i}.
\end{equation}
Moreover, because $L_i>s/(D_i\beta_i)$,
\[
 2s\beta_iL_i<2D_i\beta_i^2L_i^2.
\]
Hence
\begin{align*}
 Q_i(-L_i)
 &=4D_i\beta_i^2L_i^2-2s\beta_iL_i
 -2D_i\beta_i-e_iv_i>2D_i\beta_i^2L_i^2-2D_i\beta_i-e_iv_i>0
\end{align*}
by the second lower bound in \eqref{eq:Li-choice}.

Set
\begin{equation}
\label{eq:xiVi}
 \xi_{V_i}:=b_i+L_i.
\end{equation}
Then
\begin{equation}
\label{eq:xiVi-properties}
 \xi_{V_i}>0,
 \qquad
 \xi_{V_i}>b_i,
 \qquad
 2\beta_i\xi_{V_i}>\nu_i,
 \qquad
 Q_i(b_i-\xi_{V_i})=Q_i(-L_i)>0.
\end{equation}

\noindent
{\bf Definition of the consumer lower profile.} Define
\begin{equation}
\label{eq:lower-V}
 \underline V_i(\xi):=
 \begin{cases}
 v_ie^{-\beta_i(\xi-\xi_{V_i})^2},
 &\xi\le\xi_{V_i},\\[1mm]
 v_i,&\xi\ge\xi_{V_i}.
 \end{cases}
\end{equation}
This function is positive, bounded, continuous, and piecewise $C^2$. At the junction,
\begin{equation}
\label{eq:lower-V-corner}
 \underline V_i'(\xi_{V_i}^-)=0
 =\underline V_i'(\xi_{V_i}^+),
\end{equation}
so the lower corner condition holds with equality.

\begin{lemma}[Ordering of the consumer profiles]
\label{lem:consumer-ordering}
For every $i\in\I$,
\begin{equation}
\label{eq:consumer-ordering}
 0<\underline V_i(\xi)<\overline V_i(\xi)
 \qquad(\xi\in\R).
\end{equation}
\end{lemma}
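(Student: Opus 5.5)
We need $0<\underline V_i<\overline V_i$ on $\mathbb R$. Positivity is immediate from the definition \eqref{eq:lower-V}, since $v_i>0$ and the Gaussian factor is positive. For the strict upper comparison I would split the line into three regions according to the corners of $\overline V_i$ (at $0$) and $\underline V_i$ (at $\xi_{V_i}$), using $\xi_{V_i}>0$ from \eqref{eq:xiVi-properties}.

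\emph{Region $\xi\ge 0$.} Here $\overline V_i=1$. For $\xi\ge\xi_{V_i}$ we have $\underline V_i=v_i<1$ by \eqref{eq:vi-basic}. For $0\le\xi\le\xi_{V_i}$ the Gaussian satisfies $\underline V_i\le v_i<1$. Hence $\underline V_i<\overline V_i$ on $[0,\infty)$.

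\emph{Region $\xi\le 0$.} Here $\overline V_i=e^{\nu_i\xi}$ and, since $0<\xi_{V_i}$, also $\underline V_i=v_ie^{-\beta_i(\xi-\xi_{V_i})^2}$. I would compare logarithms through the function
\[
 \Phi_i(\xi):=\log v_i-\beta_i(\xi-\xi_{V_i})^2-\nu_i\xi .
\]
It suffices to show $\Phi_i<0$ on $(-\infty,0]$. First compute
\[
 \Phi_i'(\xi)=-2\beta_i(\xi-\xi_{V_i})-\nu_i=2\beta_i(\xi_{V_i}-\xi)-\nu_i .
\]
For $\xi\le0$ this is at least $2\beta_i\xi_{V_i}-\nu_i>0$, by the third property in \eqref{eq:xiVi-properties}. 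So $\Phi_i$ is increasing on $(-\infty,0]$, and therefore $\Phi_i(\xi)\le\Phi_i(0)$. Next,
\[
 \Phi_i(0)=\log v_i-\beta_i\xi_{V_i}^2<0,
\]
because $v_i<1$ and $\beta_i\xi_{V_i}^2>0$. Thus $\underline V_i<e^{\nu_i\xi}=\overline V_i$ for $\xi\le0$.

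The only step needing care is the region $\xi\le0$. There the lower Gaussian must stay below an exponential with rate $\nu_i$, and the monotonicity of $\Phi_i$ relies on the choice $2\beta_i\xi_{V_i}>\nu_i$ built into $L_i$ in \eqref{eq:Li-choice}. Without that choice one would instead need a global bound on the concave quadratic $\Phi_i$, which can fail. With it, the argument is just the two inequalities above, and the lemma follows by combining the two regions.
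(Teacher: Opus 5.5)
Your proof is correct and follows the paper's argument essentially line for line. On $[0,\infty)$ you use $\underline V_i\le v_i<1=\overline V_i$. On $(-\infty,0]$ you use the log-ratio $\log v_i-\beta_i(\xi-\xi_{V_i})^2-\nu_i\xi$, which is increasing because $2\beta_i\xi_{V_i}>\nu_i$ and equals $\log v_i-\beta_i\xi_{V_i}^2<0$ at $\xi=0$.
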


\begin{proof}
For $\xi\ge0$,
\[
 0<\underline V_i(\xi)\le v_i<1=\overline V_i(\xi).
\]
For $\xi\le0$, the Gaussian formula in \eqref{eq:lower-V} is valid because $\xi_{V_i}>0$. Define
\begin{equation}
\label{eq:Fi}
 F_i(\xi):=\log\frac{\underline V_i(\xi)}{\overline V_i(\xi)}
 =\log v_i-\beta_i(\xi-\xi_{V_i})^2-\nu_i\xi.
\end{equation}
Then
\begin{align}
\label{eq:Fi-prime}
 F_i'(\xi)
 &=2\beta_i(\xi_{V_i}-\xi)-\nu_i
 \ge2\beta_i\xi_{V_i}-\nu_i>0
 \qquad(\xi\le0)
\end{align}
by \eqref{eq:xiVi-properties}. Thus $F_i$ is increasing on $(-\infty,0]$, and
\begin{equation}
\label{eq:Fi-negative}
 F_i(\xi)\le F_i(0)
 =\log v_i-\beta_i\xi_{V_i}^2<0.
\end{equation}
Therefore $\underline V_i<\overline V_i$ on $(-\infty,0]$ as well.
\end{proof}

\begin{lemma}[Lower consumer differential inequality]
\label{lem:lower-consumer}
The profile \eqref{eq:lower-V} satisfies \eqref{eq:lower-V-def} away from $\xi_{V_i}$.
\end{lemma}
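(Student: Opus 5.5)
The plan is to check \eqref{eq:lower-V-def} directly on the two smooth pieces of \eqref{eq:lower-V}. On the Gaussian piece we further split at the bridge crossing point $b_i$. Throughout we use that $\underline U_i>0$ everywhere (Lemma~\ref{lem:plant-ordering}) and that $\underline U_i=h_i\ge\delta_i^0$ for $\xi\ge b_i$, by \eqref{eq:h-lower-after-b} and $b_i>a_i$.

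\emph{Constant piece $\xi>\xi_{V_i}$.} Here $\underline V_i\equiv v_i$, so the left-hand side of \eqref{eq:lower-V-def} equals $e_iv_i(\underline U_i-v_i)$. Since $\xi_{V_i}>b_i>a_i$, we have $\underline U_i=h_i\ge\delta_i^0>v_i$ by \eqref{eq:vi-basic}. Hence the expression is positive.

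\emph{Gaussian piece $\xi<\xi_{V_i}$.} Write $y:=\xi-\xi_{V_i}<0$. Then $\underline V_i'=-2\beta_iy\,\underline V_i$ and $\underline V_i''=(4\beta_i^2y^2-2\beta_i)\underline V_i$. Dividing by $\underline V_i>0$, the left-hand side of \eqref{eq:lower-V-def} becomes
\[
4D_i\beta_i^2y^2+2s\beta_iy-2D_i\beta_i+e_i\underline U_i-e_i\underline V_i .
\]
Since $\underline V_i\le v_i$, this is bounded below by $Q_i(y)+e_i\underline U_i$. We then treat two subcases.

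(a) For $\xi\le b_i$, i.e.\ $y\le -L_i$, we drop $e_i\underline U_i>0$ and use $Q_i$. The quadratic $Q_i$ is convex with vertex at $y=-s/(4D_i\beta_i)$. By \eqref{eq:L-properties} we have $-L_i<-s/(4D_i\beta_i)$, so $Q_i$ is decreasing on $(-\infty,-L_i]$. Hence $Q_i(y)\ge Q_i(-L_i)>0$ there, by \eqref{eq:xiVi-properties}.

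(b) For $b_i\le\xi<\xi_{V_i}$ we instead use $\underline U_i=h_i\ge\delta_i^0$. Completing the square gives $4D_i\beta_i^2y^2+2s\beta_iy\ge -s^2/(4D_i)$. The expression is therefore at least $-s^2/(4D_i)-2D_i\beta_i+e_i(\delta_i^0-v_i)\ge 3G_i/4>0$ by \eqref{eq:consumer-gap}.

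There is no real obstacle. The only point needing care is subcase (a): there the plant may be tiny, so positivity must come entirely from the Gaussian's own curvature. This is exactly why $L_i$ was chosen past the vertex of $Q_i$, making $Q_i$ monotone, and hence positive, on the whole left region.
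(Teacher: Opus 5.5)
Your proof is correct and follows essentially the same route as the paper: the same three-region split (the constant piece, the Gaussian piece on $[b_i,\xi_{V_i})$ handled by completing the square and \eqref{eq:consumer-gap}, and the Gaussian piece left of $b_i$ handled by monotonicity of $Q_i$ on $(-\infty,-L_i]$). The only cosmetic difference is that you justify the monotonicity of $Q_i$ through the location of its vertex at $-s/(4D_i\beta_i)$, whereas the paper uses an explicit bound on $Q_i'$.
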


\begin{proof}
We divide the real line into three regions.

\emph{Region I: $\xi>\xi_{V_i}$.}
Here $\underline V_i=v_i$. Since $\xi_{V_i}>b_i$, \eqref{eq:h-lower-after-b} gives $\underline U_i=h_i\ge\delta_i^0$. Hence
\begin{align*}
&D_i\underline V_i''-s\underline V_i'
+e_i\underline V_i(\underline U_i-\underline V_i)=e_iv_i(\underline U_i-v_i)
\ge e_iv_i(\delta_i^0-v_i)>0.
\end{align*}

\emph{Region II: $b_i\le\xi<\xi_{V_i}$.}
Put
\begin{equation}
\label{eq:y-def}
 y:=\xi-\xi_{V_i}\le0.
\end{equation}
Then
\begin{equation}
\label{eq:gaussian-derivatives}
 \underline V_i'=-2\beta_i y\underline V_i,
 \qquad
 \underline V_i''=(4\beta_i^2y^2-2\beta_i)\underline V_i.
\end{equation}
Consequently,
\begin{align}
\label{eq:consumer-bracket}
&D_i\underline V_i''-s\underline V_i'
+e_i\underline V_i(\underline U_i-\underline V_i)=\underline V_i\left[
4D_i\beta_i^2y^2+2s\beta_i y-2D_i\beta_i
+e_i(\underline U_i-\underline V_i)
\right].
\end{align}
Completing the square,
\begin{equation}
\label{eq:complete-square}
 4D_i\beta_i^2y^2+2s\beta_i y
 =4D_i\beta_i^2
 \left(y+\frac{s}{4D_i\beta_i}\right)^2
 -\frac{s^2}{4D_i}
 \ge-\frac{s^2}{4D_i}.
\end{equation}
Also, by \eqref{eq:h-lower-after-b} and \eqref{eq:lower-V},
\[
 \underline U_i\ge\delta_i^0,
 \qquad
 \underline V_i\le v_i.
\]
Therefore \eqref{eq:consumer-bracket} and \eqref{eq:consumer-gap} imply
\begin{align*}
&D_i\underline V_i''-s\underline V_i'
+e_i\underline V_i(\underline U_i-\underline V_i)\ge\underline V_i
\left[-2D_i\beta_i-\frac{s^2}{4D_i}
+e_i(\delta_i^0-v_i)\right]>0.
\end{align*}

\emph{Region III: $\xi<b_i$.}
Again set $y=\xi-\xi_{V_i}$. Since $\xi<b_i$ and $\xi_{V_i}=b_i+L_i$,
\begin{equation}
\label{eq:y-left}
 y<-L_i.
\end{equation}
Using only $\underline U_i>0$ and $\underline V_i\le v_i$, the bracket in \eqref{eq:consumer-bracket} is bounded below by
\begin{equation}
\label{eq:Q-lower}
 Q_i(y)=4D_i\beta_i^2y^2+2s\beta_i y
 -2D_i\beta_i-e_iv_i.
\end{equation}
For $y\le-L_i$,
\begin{align}
\label{eq:Q-prime}
 Q_i'(y)
 &=8D_i\beta_i^2y+2s\beta_i
 \le-8D_i\beta_i^2L_i+2s\beta_i<0
\end{align}
by \eqref{eq:L-properties}. Therefore $Q_i$ is decreasing as $y$ increases on $(-\infty,-L_i]$, so
\begin{equation}
\label{eq:Q-positive-left}
 Q_i(y)\ge Q_i(-L_i)>0
 \qquad(y\le-L_i).
\end{equation}
It follows from \eqref{eq:consumer-bracket} that the lower consumer inequality is strict on Region III as well.
\end{proof}

\subsection{Completion of the proof and compatibility audit}
\begin{proof}[Proof of Lemma~\ref{lem:ordered-barriers}]
The upper profiles are given by \eqref{eq:upper-U} and \eqref{eq:upper-V}. The lower plant and consumer profiles are given by \eqref{eq:lower-U} and \eqref{eq:lower-V}.

Positivity, boundedness, continuity, and piecewise $C^2$ regularity follow from their explicit formulas. Lemmas~\ref{lem:plant-ordering} and \ref{lem:consumer-ordering} give the componentwise ordering. Lemma~\ref{lem:upper} gives the two upper differential inequalities. Lemmas~\ref{lem:lower-plant} and \ref{lem:lower-consumer} give the two lower differential inequalities.

The full corner set is
\begin{equation}
\label{eq:corner-set}
 \mathcal C:=\{0\}
 \cup\bigcup_{i\in\I}\{a_i,\xi_{V_i}\}.
\end{equation}
At $0$, the upper derivative inequalities are precisely \eqref{eq:upper-corners}. At $a_i$, the lower plant derivative inequality follows from \eqref{eq:lower-U-continuity-corner}. At $\xi_{V_i}$, the lower consumer derivatives agree by \eqref{eq:lower-V-corner}. Other components are smooth at those points. Thus all generalized corner inequalities hold.

The left limits follow immediately from the exponential and Gaussian formulas. At $+\infty$,
\[
 \underline U_i(\xi)=h_i(\xi)\longrightarrow\delta_i^\infty,
 \qquad
 \underline V_i(\xi)\longrightarrow v_i,
\]
where $0<\delta_i^\infty<\sigma_i$ and $0<v_i<1$. The upper profiles converge to $1$. This proves all claims.
\end{proof}

\section{Asymptotic behavior}
\label{sec:asymptotic}

In this section, we determine the asymptotic behavior of the solution
furnished by Lemma~\ref{lem:existence-between-barriers} for the ordered
barriers constructed in Lemma~\ref{lem:ordered-barriers}. The argument
is based on a shrinking-box construction adapted to the
$2N$-component system.

We first recall the following lemma.

\begin{lemma}
\label{lem:fluctuation}
Let $f\in C^2([0,\infty))$ be bounded, and assume that $f'$ and $f''$
are bounded. Set
\[
f^-:=\liminf_{\xi\to+\infty}f(\xi),
\qquad
f^+:=\limsup_{\xi\to+\infty}f(\xi).
\]
Then there exist sequences $\{\xi_n^-\}$ and $\{\xi_n^+\}$ tending to
$+\infty$ such that
\begin{align*}
f(\xi_n^-)&\longrightarrow f^-,
&
f'(\xi_n^-)&\longrightarrow0,
&
\liminf_{n\to\infty}f''(\xi_n^-)&\geq0,
\\
f(\xi_n^+)&\longrightarrow f^+,
&
f'(\xi_n^+)&\longrightarrow0,
&
\limsup_{n\to\infty}f''(\xi_n^+)&\leq0.
\end{align*}
\end{lemma}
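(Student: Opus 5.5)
We treat the two halves symmetrically, so we describe the plan for $f^-$ and obtain the $f^+$ statement by applying it to $-f$. We split into two cases according to whether $f$ is eventually monotone or oscillates infinitely often near $f^-$.

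\emph{Case 1: $f$ is eventually monotone.} Then $f(\xi)\to f^-=f^+$ as $\xi\to+\infty$. Since $f''$ is bounded, $f'$ is uniformly continuous. A bounded function with a limit at infinity and a uniformly continuous derivative satisfies $f'(\xi)\to0$; this is Barbalat's lemma, or directly: $f(\xi+h)-f(\xi)=hf'(\xi)+O(h^2)$ with $h$ fixed small and the left side tending to $0$. For the second-derivative condition we must choose points carefully. Assume without loss that $f$ is eventually nonincreasing, the other case being analogous, and set $g:=f'\le0$, so that $g\to0$. If $\liminf_{\xi\to\infty}f''(\xi)\ge0$ along the whole tail, any sequence works. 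Otherwise we use the following observation: since $g\le0$ and $g\to0$, there are arbitrarily large $\xi$ with $g'(\xi)=f''(\xi)\ge0$. Indeed, if $g'<0$ on some $[T,\infty)$, then $g$ would be strictly decreasing and negative there, contradicting $g\to0$. Choosing such points $\xi_n\to\infty$ gives $f(\xi_n)\to f^-$, $f'(\xi_n)\to0$ and $f''(\xi_n)\ge0$. In the nondecreasing case we have $g\ge0$ and $g\to0$, and we pick instead points with $g'\le0$; those give the $f^+$ statement. For the $f^-$ statement with $f$ nondecreasing, we need points where $f''\ge 0$ asymptotically. We take minima of $f'$ over long windows, or we argue by contradiction: if $f''\le-\epsilon$ on a tail, then $f'\to-\infty$, which is impossible. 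Hence $\liminf_{\xi\to\infty} f''\ge 0$ fails to be violated uniformly. More carefully, for each $n$ there is a point beyond $n$ with $f''>-1/n$, and at that point $f'$ is already small and $f$ is already close to $f^-$.

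\emph{Case 2: $f$ is not eventually monotone, and $f^-<f^+$ or $f$ oscillates.} Then $f$ has local minima at points $\eta_k\to\infty$, where $f'(\eta_k)=0$ and $f''(\eta_k)\ge0$. The key step is to show that the values at these minima approach $f^-$. We choose $\zeta_n\to\infty$ with $f(\zeta_n)\to f^-$. If $f$ is not monotone near $\zeta_n$, then the minimum of $f$ over an interval $[\alpha,\beta]\ni\zeta_n$, where $\alpha,\beta$ are large and $f(\alpha),f(\beta)>f(\zeta_n)$, is attained at an interior local minimum $\eta$ with $f(\eta)\le f(\zeta_n)$. Such $\alpha,\beta$ exist whenever $f(\zeta_n)<\liminf$ plus a small margin fails to be the running minimum, that is, whenever $f(\zeta_n)$ is not approached monotonically. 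If no such interval exists for the tail of the sequence, $f$ is eventually monotone near its infimum values and we are back in Case 1.

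The main obstacle is this bookkeeping, that is, making sure every configuration produces interior minima or reduces to Case 1. The cleanest route is the uniform scheme "for each $n$, find $\xi>n$ with $|f(\xi)-f^-|<1/n$, $|f'(\xi)|<1/n$, $f''(\xi)>-1/n$", which we would verify by contradiction using Taylor expansion with the bounded $f''$.
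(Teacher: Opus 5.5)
\textbf{Case~2 has a gap.} Your Case~1 is sound. Its last argument never actually uses monotonicity. Whenever $f$ converges, the bound on $f''$ makes $f'$ uniformly continuous, so $f'\to0$. Moreover, $f''\le-1/n$ on a tail would force $f'\to-\infty$. Hence points with $|f-f^-|<1/n$, $|f'|<1/n$, $f''>-1/n$ occur arbitrarily far out.

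In Case~2 you make two claims. First, that large $\alpha<\zeta_n<\beta$ with $f(\alpha),f(\beta)>f(\zeta_n)$ exist ``whenever $f(\zeta_n)$ is not approached monotonically''. Second, that otherwise ``$f$ is eventually monotone near its infimum values and we are back in Case~1''. Neither claim is proved, and the second is wrong. When $f^-<f^+$, such intervals are available, so the fallback only arises when $f^-=f^+$. A convergent $f$ need not be eventually monotone. For instance, $f(\xi)=-(2+\sin\xi)/(1+\xi)$ satisfies all the hypotheses and is not eventually monotone. If $\zeta_n$ are chosen at running maxima of $f$, there is no $\alpha<\zeta_n$ with $f(\alpha)>f(\zeta_n)$. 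Case~1, as you set it up (eventually monotone), does not cover this. Your closing ``uniform scheme \dots verified by contradiction using Taylor expansion'' only restates the conclusion. So the bookkeeping you yourself flag as the main obstacle is left open.

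\textbf{Repair.} Split according to $f^-=f^+$ versus $f^-<f^+$, rather than monotone versus non-monotone.
\begin{itemize}
\item If $f^-=f^+$, the Case~1 argument above applies verbatim.
\item If $f^-<f^+$, set $c:=(f^-+f^+)/2$ and $\beta_0:=0$, and choose inductively: $\alpha_n>\max\{n,\beta_{n-1}\}$ with $f(\alpha_n)>c$; then $\zeta_n>\alpha_n$ with $f(\zeta_n)<\min\{c,f^-+1/n\}$; then $\beta_n>\zeta_n$ with $f(\beta_n)>c$. The minimum of $f$ over $[\alpha_n,\beta_n]$ is attained at an interior point $\eta_n\ge\alpha_n$. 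There $f'(\eta_n)=0$, $f''(\eta_n)\ge0$ and $f(\eta_n)<f^-+1/n$. Also $\liminf f(\eta_n)\ge f^-$ because $\eta_n\to\infty$.
\end{itemize}

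\textbf{Comparison with the paper.} Once repaired, your route is a legitimate alternative to the paper's argument, which needs no case distinction at all. With $f(t_n)\to f^+$, it maximizes $\Phi_n(\xi)=f(\xi)-n^{-3}(\xi-t_n)^2$ over $[t_n-n^2,t_n+n^2]$. The penalty equals $n$ at the endpoints and $f$ is bounded, so the maximizer $\xi_n^+$ is interior. This gives $|f'(\xi_n^+)|\le 2/n$, $f''(\xi_n^+)\le 2n^{-3}$ and $f(\xi_n^+)\ge f(t_n)$ in one step. The penalization uses only the boundedness of $f$, whereas your convergent case genuinely relies on the bound for $f''$. In return, your oscillating case yields exact critical points with $f''\ge0$.
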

\begin{proof}
We prove the assertion associated with $f^+$; the assertion for $f^-$
follows by applying the same argument to $-f$. Choose $t_n\to+\infty$ such that
\[
f(t_n)\to f^+,
\qquad
t_n>2n^2.
\]
Set $\varepsilon_n:=n^{-3}$ and consider
\[
\Phi_n(\xi)
:=
f(\xi)-\varepsilon_n(\xi-t_n)^2
\]
on the interval
\[
I_n:=[t_n-n^2,t_n+n^2].
\]
Since $f$ is bounded and
\[
\varepsilon_n n^4=n,
\]
the maximum of $\Phi_n$ is attained, for all sufficiently large $n$,
at an interior point $\xi_n^+\in I_n$. Therefore
\[
f'(\xi_n^+)
=
2\varepsilon_n(\xi_n^+-t_n),
\qquad
f''(\xi_n^+)\leq2\varepsilon_n.
\]
It follows that
\[
|f'(\xi_n^+)|
\leq2\varepsilon_n n^2=\frac2n\to0,
\qquad
\limsup_{n\to\infty}f''(\xi_n^+)\leq0.
\]
Moreover,
\[
f(\xi_n^+)\geq f(t_n),
\]
while $\xi_n^+\to+\infty$. Hence
\[
f(\xi_n^+)\to f^+.
\]
Applying this argument to $-f$ yields the sequence $\xi_n^-$ and
completes the proof.
\end{proof}

We now establish the shrinking-box result.

\begin{lemma}[Shrinking-box argument]
\label{lem:shrinking-box}
Let $s>0$. Assume that $K=(\kappa_{ij})_{i,j\in\N}$ is nonnegative and
\[
\Theta:=\|K\|_\infty
=
\max_{i\in\N}\sum_{j\in\N}\kappa_{ij}
<1.
\]
Let $(U,V)\in C^2(\mathbb R,\mathbb R^{2N})$ be a bounded strictly positive solution of
\eqref{eq:traveling-wave-system}. Suppose that, for every $i\in\N$,
\begin{equation}
\label{eq:positive-tail-bounds}
0<
\liminf_{\xi\to+\infty}U_i(\xi)
\leq
\limsup_{\xi\to+\infty}U_i(\xi)
\leq1
\end{equation}
and
\begin{equation}
\label{eq:positive-tail-bounds-V}
0<
\liminf_{\xi\to+\infty}V_i(\xi)
\leq
\limsup_{\xi\to+\infty}V_i(\xi)
\leq1.
\end{equation}
Then
\[
\lim_{\xi\to+\infty}(U,V)(\xi)=E^*,
\]
where
\[
E^*=(u^*,v^*),
\qquad
v^*=u^*=(I+K)^{-1}\mathbf 1.
\]
\end{lemma}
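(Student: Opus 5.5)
We would follow the classical shrinking-box (limsup/liminf) strategy. For each $i$, set $\overline u_i:=\limsup_{\xi\to+\infty}U_i$, $\underline u_i:=\liminf U_i$, and define $\overline v_i,\underline v_i$ analogously; by hypothesis all of these lie in $(0,1]$. Since $(U,V)$ is bounded and solves a second-order ODE with polynomial nonlinearity, standard interior estimates give bounded $U_i',U_i'',V_i',V_i''$. Lemma~\ref{lem:fluctuation} then applies to each component.

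We evaluate the equations along the extremal sequences. Along $\xi_n^+$ for $V_i$, we have $V_i'\to0$ and $\limsup V_i''\le0$, so $e_iV_i(U_i-V_i)=sV_i'-D_iV_i''$ has nonnegative $\liminf$. Since $\overline v_i>0$, this yields $\overline v_i\le\limsup U_i(\xi_n^+)\le\overline u_i$. The analogous argument along $\xi_n^-$ gives $\underline v_i\ge\underline u_i$. For the plant, we use $U_i>0$ and divide by $r_iU_i$ along the max sequence. This gives
\[
\overline u_i\le 1-\kappa_{ii}\underline v_i-\sum_{j\ne i}\kappa_{ij}\underline u_j
\le 1-\kappa_{ii}\underline u_i-\sum_{j\ne i}\kappa_{ij}\underline u_j,
\]
and along the min sequence, using $\underline u_i>0$,
\[
\underline u_i\ge 1-\kappa_{ii}\overline v_i-\sum_{j\ne i}\kappa_{ij}\overline u_j
\ge 1-\sum_{j}\kappa_{ij}\overline u_j .
\]
Here the positivity of the liminfs is exactly what allows division; without it the min-sequence inequality would degenerate.

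Subtracting these inequalities with $w_i:=\overline u_i-\underline u_i\ge0$ gives $w_i\le\sum_j\kappa_{ij}(\overline u_j-\underline u_j)=(Kw)_i$, hence $\|w\|_\infty\le\Theta\|w\|_\infty$. Since $\Theta<1$, this forces $w=0$. So $U_i\to u_i$ for every $i$, and then $\underline u_i\le\underline v_i\le\overline v_i\le\overline u_i$ forces $V_i\to u_i$ as well. Passing to the limit in either inequality, now equalities squeezed from both sides, gives $u_i=1-\kappa_{ii}u_i-\sum_{j\ne i}\kappa_{ij}u_j$, i.e.\ $(I+K)u=\mathbf 1$. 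By the uniqueness established in Appendix~\ref{CE}, this gives $u=u^*$ and $v=v^*=u^*$.

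The main obstacle is the mixed monotonicity. The consumer term $-\kappa_{ii}V_i$ enters the plant equation with the opposite sign to the plant–consumer coupling in the $V$-equation. The chain therefore has to be ordered carefully: $\underline v_i\ge\underline u_i$ is inserted into the $\overline u_i$ inequality, and $\overline v_i\le\overline u_i$ into the $\underline u_i$ inequality, so that the diagonal coefficient $\kappa_{ii}$ merges into the full row sum $\rho_i\le\Theta$. Once this is done, the contraction argument closes without needing any further structure of $K$.
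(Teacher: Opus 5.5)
Your proof is correct, but it takes a different route from the paper's.

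The paper fixes the target $a=u^*$ in advance and runs a continuous contracting-rectangle argument. With $R=1+\varepsilon$ and $R\Theta<1$, it uses the nested boxes $[\theta a_i,\ \theta a_i+(1-\theta)R]$, with the same box for $U_i$ and $V_i$. It takes $\theta_*$ to be the supremum of admissible $\theta$ and rules out each of the four possible boundary contacts at $\theta_*$. The plant contacts are handled first, then the consumer contacts using the strict plant inequalities just obtained. The signs are computed from the identity $a_i+\sum_j\kappa_{ij}a_j=1$.

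You instead extract all the extremal information at once:
\begin{itemize}
\item The consumer equations give $\underline u_i\le\underline v_i\le\overline v_i\le\overline u_i$. This plays the role of the paper's shared box for $U_i$ and $V_i$.
\item Inserting these into the plant inequalities merges $\kappa_{ii}$ into the full row sum.
\item The oscillation vector then satisfies $0\le w\le Kw$, which $\|K\|_\infty<1$ forces to vanish.
\item The common limit is identified afterwards from $(I+K)u=\mathbf 1$. Invertibility of $I+K$ already gives this, so the appeal to the appendix is optional.
\end{itemize}

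What your version buys: it is shorter and avoids the supremum, continuity and contact bookkeeping. It does not need to know $E^*$ beforehand. It also never uses the hypothesis $\limsup\le 1$; boundedness and positive liminfs suffice, since your plant inequality yields $\overline u_i\le 1$ automatically and then $\overline v_i\le\overline u_i$. The paper, by contrast, needs all limsups below some $R<1/\Theta$ to place its initial box.

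What the paper's method buys: it relies only on sign conditions on the faces of a shrinking family of rectangles, not on the affine dependence of the plant bracket on the competitors. That makes it a template that can carry over to nonlinearities where no linear contraction like $w\le Kw$ is available.
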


\begin{proof}
For each $i\in\N$, write
\[
a_i:=u_i^*=v_i^*.
\]
Since $(I+K)u^*=\mathbf 1$, we have
\begin{equation}
\label{eq:equilibrium-identity}
a_i+\sum_{j\in\N}\kappa_{ij}a_j=1,
\qquad i\in\N.
\end{equation}

We first observe that the first and second derivatives of all components
of $(U,V)$ are bounded on $[0,\infty)$. Indeed, since $(U,V)$ is bounded,
all reaction terms in \eqref{eq:traveling-wave-system} are bounded.
Each component $y$ therefore satisfies an equation of the form
\[
d y''-s y'=h(\xi),
\qquad
\|h\|_{L^\infty}<\infty,
\]
with $d>0$ and $s>0$. Boundedness of $y$ rules out exponential growth
of $y'$, and hence $y'$ is bounded. The equation then implies that
$y''$ is bounded. Consequently, Lemma~\ref{lem:fluctuation} applies to
every component.

For convenience, set
\begin{align*}
U_i^-&:=\liminf_{\xi\to+\infty}U_i(\xi),
&
U_i^+&:=\limsup_{\xi\to+\infty}U_i(\xi),
\\
V_i^-&:=\liminf_{\xi\to+\infty}V_i(\xi),
&
V_i^+&:=\limsup_{\xi\to+\infty}V_i(\xi).
\end{align*}

Choose $\varepsilon>0$ sufficiently small so that
\begin{equation}
\label{eq:epsilon-shrinking}
R\Theta<1,
\qquad
R:=1+\varepsilon.
\end{equation}
For $\theta\in[0,1]$, define
\begin{equation}
\label{eq:shrinking-box}
m_i(\theta):=\theta a_i,
\qquad
M_i(\theta):=\theta a_i+(1-\theta)R.
\end{equation}
Thus
\[
m_i(0)=0,
\qquad
M_i(0)=R,
\qquad
m_i(1)=M_i(1)=a_i.
\]

Consider the set
\[
\mathcal A
:=
\left\{
\theta\in[0,1):
\begin{array}{l}
m_i(\theta)<U_i^-\leq U_i^+<M_i(\theta),\\
m_i(\theta)<V_i^-\leq V_i^+<M_i(\theta),
\quad i\in\N
\end{array}
\right\}.
\]
By \eqref{eq:positive-tail-bounds} and
\eqref{eq:positive-tail-bounds-V}, we have $0\in\mathcal A$.
Moreover, since all inequalities are strict and there are only finitely
many components, $\mathcal A$ contains a nontrivial interval
$[0,\theta_0]$. In particular,
\[
\theta_*:=\sup\mathcal A>0.
\]

We claim that $\theta_*=1$. Suppose, to the contrary, that
$\theta_*<1$. By continuity in $\theta$,
\begin{align}
m_i(\theta_*)&\leq U_i^-\leq U_i^+\leq M_i(\theta_*),
\label{eq:closed-box-U}
\\
m_i(\theta_*)&\leq V_i^-\leq V_i^+\leq M_i(\theta_*).
\label{eq:closed-box-V}
\end{align}
If all inequalities in \eqref{eq:closed-box-U}--\eqref{eq:closed-box-V}
were strict, the box could be shrunk slightly further, contradicting
the definition of $\theta_*$. Hence at least one component touches one
of the boundaries.

We show that none of these contacts is possible.

\medskip

\noindent
\emph{Step 1: no plant component can touch the lower boundary.}

Suppose that, for some $i\in\N$,
\[
U_i^-=m_i(\theta_*).
\]
By Lemma~\ref{lem:fluctuation}, there exists $\xi_n\to+\infty$ such that
\[
U_i(\xi_n)\to m_i(\theta_*),
\qquad
U_i'(\xi_n)\to0,
\qquad
\liminf_{n\to\infty}U_i''(\xi_n)\geq0.
\]
Using \eqref{eq:closed-box-U}--\eqref{eq:closed-box-V}, we obtain
\begin{align*}
\liminf_{n\to\infty}
\left(
1-U_i(\xi_n)-\kappa_{ii}V_i(\xi_n)
-\sum_{\substack{j\in\N,\,j\neq i}}
\kappa_{ij}U_j(\xi_n)
\right)\geq
1-m_i(\theta_*)-\kappa_{ii}M_i(\theta_*)
-\sum_{\substack{j\in\N,\,j\neq i}}
\kappa_{ij}M_j(\theta_*).
\end{align*}
By \eqref{eq:equilibrium-identity},
\begin{align}
1-m_i(\theta)-\kappa_{ii}M_i(\theta)
-\sum_{\substack{j\in\N,\,j\neq i}}
\kappa_{ij}M_j(\theta)=(1-\theta)
\left(
1-R\sum_{j\in\N}\kappa_{ij}
\right)
\geq
(1-\theta)(1-R\Theta)>0
\label{eq:lower-plant-sign}
\end{align}
for every $\theta<1$.

Since $\theta_*>0$, we have $m_i(\theta_*)>0$. Passing to the lower
limit in the $i$th plant equation therefore gives
\[
0
\geq
r_i m_i(\theta_*)
(1-\theta_*)(1-R\Theta)
>0,
\]
which is impossible. Hence
\begin{equation}
\label{eq:no-lower-U-contact}
U_i^->m_i(\theta_*),
\qquad i\in\N.
\end{equation}

\medskip

\noindent
\emph{Step 2: no plant component can touch the upper boundary.}

Suppose that
\[
U_i^+=M_i(\theta_*)
\]
for some $i\in\N$. By Lemma~\ref{lem:fluctuation}, there exists
$\eta_n\to+\infty$ such that
\[
U_i(\eta_n)\to M_i(\theta_*),
\qquad
U_i'(\eta_n)\to0,
\qquad
\limsup_{n\to\infty}U_i''(\eta_n)\leq0.
\]
Moreover,
\begin{align*}
&\limsup_{n\to\infty}
\left(
1-U_i(\eta_n)-\kappa_{ii}V_i(\eta_n)
-\sum_{\substack{j\in\N,\,j\neq i}}
\kappa_{ij}U_j(\eta_n)
\right)\leq
1-M_i(\theta_*)-\kappa_{ii}m_i(\theta_*)
-\sum_{\substack{j\in\N,\,j\neq i}}
\kappa_{ij}m_j(\theta_*).
\end{align*}
Using \eqref{eq:equilibrium-identity}, we find
\begin{align}
&1-M_i(\theta)-\kappa_{ii}m_i(\theta)
-\sum_{\substack{j\in\N,\,j\neq i}}
\kappa_{ij}m_j(\theta)=(1-\theta)(1-R)
=-\varepsilon(1-\theta)<0.
\label{eq:upper-plant-sign}
\end{align}
Passing to the upper limit in the $i$th plant equation gives a strict
negative upper bound for zero, which is a contradiction. Therefore,
\begin{equation}
\label{eq:no-upper-U-contact}
U_i^+<M_i(\theta_*),
\qquad i\in\N.
\end{equation}

\medskip

\noindent
\emph{Step 3: no consumer component can touch the lower boundary.}

Suppose that
\[
V_i^-=m_i(\theta_*)
\]
for some $i\in\N$. By \eqref{eq:no-lower-U-contact},
\[
U_i^->m_i(\theta_*).
\]
Choose a sequence $\xi_n\to+\infty$ such that
\[
V_i(\xi_n)\to m_i(\theta_*),
\qquad
V_i'(\xi_n)\to0,
\qquad
\liminf_{n\to\infty}V_i''(\xi_n)\geq0.
\]
Then
\[
\liminf_{n\to\infty}
\bigl(U_i(\xi_n)-V_i(\xi_n)\bigr)
\geq
U_i^--m_i(\theta_*)>0.
\]
Passing to the lower limit in the $i$th consumer equation yields
\[
0
\geq
e_i m_i(\theta_*)
\bigl(U_i^--m_i(\theta_*)\bigr)
>0,
\]
a contradiction. Thus
\begin{equation}
\label{eq:no-lower-V-contact}
V_i^->m_i(\theta_*),
\qquad i\in\N.
\end{equation}

\medskip

\noindent
\emph{Step 4: no consumer component can touch the upper boundary.}

Suppose that
\[
V_i^+=M_i(\theta_*)
\]
for some $i\in\N$. By \eqref{eq:no-upper-U-contact},
\[
U_i^+<M_i(\theta_*).
\]
Choose $\eta_n\to+\infty$ such that
\[
V_i(\eta_n)\to M_i(\theta_*),
\qquad
V_i'(\eta_n)\to0,
\qquad
\limsup_{n\to\infty}V_i''(\eta_n)\leq0.
\]
Then
\[
\limsup_{n\to\infty}
\bigl(U_i(\eta_n)-V_i(\eta_n)\bigr)
\leq
U_i^+-M_i(\theta_*)<0.
\]
Passing to the upper limit in the $i$th consumer equation yields a
strict negative upper bound for zero, again a contradiction. Therefore,
\begin{equation}
\label{eq:no-upper-V-contact}
V_i^+<M_i(\theta_*),
\qquad i\in\N.
\end{equation}

We have ruled out every possible contact with the boundary of the box.
This contradicts the definition of $\theta_*$ unless
\[
\theta_*=1.
\]
Since the boxes are nested, it follows that, for every $\theta<1$,
\[
m_i(\theta)
<
U_i^-\leq U_i^+
<
M_i(\theta)
\]
and
\[
m_i(\theta)
<
V_i^-\leq V_i^+
<
M_i(\theta).
\]
Letting $\theta\to1^-$ gives
\[
U_i^-=U_i^+=a_i,
\qquad
V_i^-=V_i^+=a_i,
\qquad i\in\N.
\]
Consequently,
\[
\lim_{\xi\to+\infty}U_i(\xi)
=
\lim_{\xi\to+\infty}V_i(\xi)
=
a_i,
\]
which proves
\[
\lim_{\xi\to+\infty}(U,V)(\xi)=E^*.
\]  
\end{proof}

We can now verify the asymptotic conditions for the solution obtained
from the upper--lower solution construction.

\begin{proposition}[Asymptotic behavior of the trapped solution]
\label{prop:asymptotic-behavior}
Let $(U,V)$ be the solution furnished by
Lemma~\ref{lem:existence-between-barriers} for the ordered barriers
constructed in Lemma~\ref{lem:ordered-barriers}. Then
\[
\lim_{\xi\to-\infty}(U,V)(\xi)=(0,0)
\]
and
\[
\lim_{\xi\to+\infty}(U,V)(\xi)=E^*.
\]
In particular, $(U,V)$ satisfies
\eqref{eq:wave-boundary-conditions}.
\end{proposition}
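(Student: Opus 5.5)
The plan is to treat the two ends separately: squeezing gives the behaviour at $-\infty$, and Lemma~\ref{lem:shrinking-box} gives it at $+\infty$.

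\emph{Limit at $-\infty$.} By Lemma~\ref{lem:existence-between-barriers}, the trapped solution satisfies
\[
0<\underline U_i\le U_i\le\overline U_i,
\qquad
0<\underline V_i\le V_i\le\overline V_i
\qquad\text{on }\R .
\]
By \eqref{eq:left-limits} in Lemma~\ref{lem:ordered-barriers}, the upper barriers $\overline U_i=e^{\lambda_i^-\xi}$ and $\overline V_i=e^{\nu_i\xi}$ tend to $0$ as $\xi\to-\infty$. The squeeze therefore gives $(U,V)(\xi)\to(0,0)$ immediately.

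\emph{Limit at $+\infty$.} Here I will check the hypotheses of Lemma~\ref{lem:shrinking-box}. Since $s>s_{\mathrm P}>0$, the standing assumption $s>0$ holds, and the weak-interaction condition $\Theta<1$ is in force. The solution is $C^\infty$ by Lemma~\ref{lem:existence-between-barriers}, hence $C^2$. It is bounded because $0<U_i,V_i\le\overline U_i,\overline V_i\le1$, and it is strictly positive.

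For the tail bounds \eqref{eq:positive-tail-bounds}--\eqref{eq:positive-tail-bounds-V}, I use \eqref{eq:right-bounds} and \eqref{eq:upper-right-limits}:
\[
\liminf_{\xi\to+\infty}U_i\ge\lim_{\xi\to+\infty}\underline U_i=\delta_i^\infty>0,
\qquad
\liminf_{\xi\to+\infty}V_i\ge\lim_{\xi\to+\infty}\underline V_i=v_i>0,
\]
and
\[
\limsup_{\xi\to+\infty}U_i,\ \limsup_{\xi\to+\infty}V_i\le\lim_{\xi\to+\infty}\overline U_i=\lim_{\xi\to+\infty}\overline V_i=1 .
\]
Lemma~\ref{lem:shrinking-box} then yields $(U,V)(\xi)\to E^*$ as $\xi\to+\infty$.

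\emph{Main obstacle.} The only point needing care is the boundedness of derivatives required by Lemma~\ref{lem:fluctuation}. This is already handled inside the proof of Lemma~\ref{lem:shrinking-box} via the equation $dy''-sy'=h$ with $h$ bounded. If I wanted to justify it directly, I would write
\[
y'(\xi)=-\frac1d\int_\xi^\infty e^{-\frac sd(\eta-\xi)}h(\eta)\,d\eta
+\lim_{\eta\to\infty}(\cdots)\, e^{\frac sd\xi}.
\]
The coefficient of the growing mode $e^{\frac sd\xi}$ must vanish, since otherwise $y'$ would grow exponentially and $y$ would be unbounded. Hence $y'$ is bounded, and then $y''$ is bounded from the equation. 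With this in hand, the proposition follows and \eqref{eq:wave-boundary-conditions} holds.
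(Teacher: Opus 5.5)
Your proposal is correct and follows essentially the same route as the paper's proof: you squeeze between the barriers to get the limit at $-\infty$, and at $+\infty$ you feed the lower limits $\delta_i^\infty$, $v_i$ and the upper bound $1$ into Lemma~\ref{lem:shrinking-box}. Your additional justification that the derivatives are bounded (killing the growing mode $e^{(s/d)\xi}$) is the same argument the paper already gives inside the proof of that lemma.
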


\begin{proof}
By construction of the upper and lower solutions,
\[
\lim_{\xi\to-\infty}
(\underline U,\underline V)(\xi)
=
\lim_{\xi\to-\infty}
(\overline U,\overline V)(\xi)
=
(0,0).
\]
Since
\[
(\underline U,\underline V)
\leq
(U,V)
\leq
(\overline U,\overline V),
\]
the squeeze theorem gives
\[
\lim_{\xi\to-\infty}(U,V)(\xi)=(0,0).
\]

At the opposite end,
\[
\underline U_i(\xi)=h_i(\xi)
\longrightarrow\delta_i^\infty>0,
\qquad
\underline V_i(\xi)\longrightarrow v_i>0
\quad\text{as }\xi\to+\infty.
\]
Consequently,
\[
0<\delta_i^\infty
\leq\liminf_{\xi\to+\infty}U_i(\xi)
\leq\limsup_{\xi\to+\infty}U_i(\xi)
\leq1,
\]
and similarly
\[
0<v_i
\leq\liminf_{\xi\to+\infty}V_i(\xi)
\leq\limsup_{\xi\to+\infty}V_i(\xi)
\leq1.
\]
Lemma~\ref{lem:shrinking-box} therefore implies
\[
\lim_{\xi\to+\infty}(U,V)(\xi)=E^*.
\]
\end{proof}

\begin{proof}[Proof of Theorem~\ref{thm:existence}]
Fix
\[
s\in(s_{\mathrm P},s_{\mathrm C}^{\mathrm{ex}}).
\]
By Lemma~\ref{lem:ordered-barriers}, system~\eqref{eq:TW} admits an
ordered generalized upper--lower pair whose lower components are
strictly positive. Lemma~\ref{lem:existence-between-barriers} then
yields a strictly positive solution
\[
(U,V)\in C^\infty(\mathbb R,\mathbb R^{2N})
\]
of \eqref{eq:TW}, trapped between the corresponding lower and upper
profiles. Proposition~\ref{prop:asymptotic-behavior} gives
\[
\lim_{\xi\to-\infty}(U,V)(\xi)=(0,0),
\qquad
\lim_{\xi\to+\infty}(U,V)(\xi)=E^*.
\]
Hence $(U,V)$ satisfies \eqref{eq:wave-boundary-conditions}.
\end{proof}

\section{Nonexistence for $s<s_{\mathrm{P}}$}
\label{sec 4}

In this section, we prove that the lower critical speed $s_{\mathrm{P}}$ is sharp. More precisely, no strictly positive traveling wave satisfying
\eqref{eq:wave-boundary-conditions} exists when $s<s_{\mathrm{P}}$. The proof is
divided into the cases $s\leq 0$ and $0<s<s_{\mathrm{P}}$.
\begin{proof}[Proof of Theorem~\ref{thm:nonexistence-below-sm}]
Suppose, by contradiction, that $(U,V)\in C^\infty(\mathbb{R},\mathbb{R}^{2N})$ is a strictly positive solution of \eqref{eq:traveling-wave-system}
satisfying
\begin{align*}
    \lim_{\xi\to-\infty}(U,V)(\xi)=(0,0).
\end{align*}
For each $i\in\N$, define
\begin{equation}
\label{eq:Qi-definition}
    \mathcal{Q}_i(\xi)
    :=
    1-U_i(\xi)-\kappa_{ii}V_i(\xi)
    -\sum_{\substack{j\in\N,\,j\neq i}}
      \kappa_{ij}U_j(\xi).
\end{equation}
We have
\begin{equation}
\label{eq:Qi-leading-edge}
    \lim_{\xi\to-\infty}\mathcal{Q}_i(\xi)=1,
    \qquad i\in\N.
\end{equation}

\medskip
\noindent
\textbf{Case 1: $s\leq 0$.} Fix any $i\in\N$. By \eqref{eq:Qi-leading-edge}, there exists
$R\in\mathbb{R}$ such that
\begin{equation}
\label{eq:Qi-half}
\mathcal{Q}_i(\xi)\geq \frac12,
\qquad
\xi\leq R.
\end{equation}
Since
\begin{align*}
\lim_{\xi\to-\infty}U_i(\xi)=0,
\end{align*}
the mean value theorem yields a sequence $a_n\to-\infty$ such that
\begin{align}
\label{eq:derivative-sequence}
U_i'(a_n)\to0.
\end{align}
Indeed, for each sufficiently large $n$, one may choose
$a_n\in(-n-1,-n)$ satisfying
\begin{align*}
U_i'(a_n)
=
U_i(-n)-U_i(-n-1).
\end{align*}

Fix $\xi\leq R$. For all sufficiently large $n$, we have $a_n<\xi$.
Integrating the $i$-th plant equation in
\eqref{eq:traveling-wave-system} over $[a_n,\xi]$ gives
\begin{align}
0
=d_iU_i'(\xi)-d_iU_i'(a_n)
-s\bigl(U_i(\xi)-U_i(a_n)\bigr)+r_i\int_{a_n}^{\xi}
\mathcal{Q}_i(\eta)U_i(\eta)\,d\eta.
\label{eq:integrated-plant-equation}
\end{align}
Since $\mathcal{Q}_i U_i\geq0$ on $(-\infty,R]$, the monotone
convergence theorem, together with \eqref{eq:derivative-sequence},
yields
\begin{equation}
\label{eq:integrated-plant-limit}
d_iU_i'(\xi)-sU_i(\xi)
+r_i\int_{-\infty}^{\xi}
\mathcal{Q}_i(\eta)U_i(\eta)\,d\eta
=0.
\end{equation}
Therefore, by \eqref{eq:Qi-half} and $s\leq0$,
\begin{align*}
d_iU_i'(\xi)
&=
sU_i(\xi)
-r_i\int_{-\infty}^{\xi}
\mathcal{Q}_i(\eta)U_i(\eta)\,d\eta<0,
\qquad
\xi\leq R.
\end{align*}
Thus, $U_i$ is strictly decreasing on $(-\infty,R]$. For any
$\eta<\xi\leq R$, we have $U_i(\xi)<U_i(\eta)$. Letting $\eta\to-\infty$, we obtain $U_i(\xi)\leq0$, which contradicts the strict positivity of $U_i$. Hence no strictly
positive traveling wave exists when $s\leq0$.

\medskip
\noindent
\textbf{Case 2: $0<s<s_{\mathrm{P}}$.} By the definition of $s_{\mathrm{P}}$, there exists $k\in\N$ such that $0<s<2\sqrt{d_kr_k}$, and, hence,
\begin{equation*}
    \gamma_k
    :=
    \frac{r_k}{d_k}-\frac{s^2}{4d_k^2}>0.
\end{equation*}
We now remove the first-order term from the $k$-th plant equation. Set
\begin{equation}
\label{eq:liouville-transform}
    Z_k(\xi)
    :=
    \exp\left(-\frac{s}{2d_k}\xi\right)U_k(\xi).
\end{equation}
Since $U_k>0$, we have
\begin{align*}
    Z_k(\xi)>0,
    \qquad \xi\in\mathbb{R}.
\end{align*}
A direct calculation using the $k$-th plant equation gives
\begin{equation}
\label{eq:transformed-equation}
    Z_k''+A_k(\xi)Z_k=0,\qquad\text{with}\qquad A_k(\xi)
    :=
    \frac{r_k}{d_k}\mathcal{Q}_k(\xi)
    -\frac{s^2}{4d_k^2}.
\end{equation}
By \eqref{eq:Qi-leading-edge}, we obtain
\begin{align*}
    \lim_{\xi\to-\infty}A_k(\xi)
    =
    \frac{r_k}{d_k}-\frac{s^2}{4d_k^2}
    =
    \gamma_k
    >0.
\end{align*}
Consequently, there exists $R\in\mathbb{R}$ such that
\begin{equation}
\label{eq:Ak-positive}
    A_k(\xi)\geq \frac{\gamma_k}{2},
    \qquad \xi\leq R.
\end{equation}
Applying Lemma~\ref{lem:sturm-comparison} to
\eqref{eq:transformed-equation}, with $\delta=\gamma_k/2$, we conclude that $Z_k$ must have a zero on $(-\infty,R]$.
This contradicts the strict positivity of $Z_k$. Therefore, no strictly positive traveling wave exists when
$0<s<s_{\mathrm{P}}$. Combining the two cases completes the proof of Theorem~\ref{thm:nonexistence-below-sm}.
\end{proof}

\begin{remark}
\label{rem:leading-edge-only}
The proof uses only the behavior of the wave profile at the leading
edge:
\begin{align*}
    \lim_{\xi\to-\infty}(U,V)(\xi)=(0,0).
\end{align*}
In particular, neither the weak-interaction condition
$\lVert K\rVert_\infty<1$ nor the convergence to the coexistence
equilibrium at $+\infty$ is required for the nonexistence argument.
\end{remark}

\section{Nonexistence for $s\geq s_{\mathrm C}^{\mathrm{nec}}$}
\label{sec 6}

\begin{lemma}[Exponential decay at the leading edge]
\label{lem:leading-edge-exponential-decay}
Let $s>0$, and let
\begin{align*}
(U,V)=(U_1,\ldots,U_N,V_1,\ldots,V_N)
\end{align*}
be a strictly positive solution of \eqref{eq:traveling-wave-system}
satisfying
\begin{align}
\label{eq:leading-edge-zero-limit}
\lim_{\xi\to-\infty}(U,V)(\xi)=(0,0).
\end{align}
Then there exist constants $C>0$, $\eta>0$, and $R<0$ such that
\begin{align}
\label{eq:full-leading-edge-exponential-bound}
\sum_{i\in\N}
\left(
 |U_i(\xi)|+|U_i'(\xi)|+|V_i(\xi)|+|V_i'(\xi)|
\right)
\leq C e^{\eta\xi},
\qquad \xi\leq R.
\end{align}
In particular, for every $i\in\N$,
\begin{align}
\label{eq:Ui-minus-Vi-L1}
U_i-V_i\in L^1(( -\infty,0]).
\end{align}
\end{lemma}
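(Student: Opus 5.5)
The plan is to work in the first-order phase space of \eqref{eq:traveling-wave-system} and treat the origin as a non-hyperbolic equilibrium. Set $X=(U_i,U_i',V_i,V_i')_{i\in\N}\in\R^{4N}$ and write the traveling-wave system as $X'=\mathcal{L}X+\mathcal{N}(X)$ with $\mathcal N$ quadratic.

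\emph{Spectrum at the origin.} Since $s>0$, the linearization $\mathcal L$ decouples.
\begin{itemize}
\item In each plant block the eigenvalues are the roots of $d_iz^2-sz+r_i=0$, which have real part $s/(2d_i)>0$.
\item In each consumer block the eigenvalues are the roots of $D_iz^2-sz=0$, namely $z=0$ and $z=s/D_i>0$.
\end{itemize}
Hence there is no stable spectrum. The spectrum consists of an $N$-dimensional center part, spanned by the directions $(V_i,V_i')=(1,0)$, and an unstable part whose real parts are at least $\eta_0:=\min_i\min\{s/(2d_i),s/D_i\}>0$.

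A solution with $X(\xi)\to0$ as $\xi\to-\infty$ stays, for $\xi\le R$, in an arbitrarily small neighbourhood of the origin. I would therefore invoke a local center(-unstable) manifold theorem with exponential tracking (asymptotic phase), applied in backward time, where the unstable directions become stable. It yields a trajectory $X_c$ on a local $C^2$ center manifold $W^c=\{(U,U',V,V')=(\Phi(V),V'=\Psi(V))\}$ with $\Phi,\Psi=O(|V|^2)$, and constants $C,\eta$, $0<\eta<\eta_0$, such that
\begin{align*}
|X(\xi)-X_c(\xi)|\le Ce^{\eta\xi}\qquad(\xi\le R).
\end{align*}
The whole lemma then reduces to showing $X_c\equiv0$.

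\emph{The reduced flow.} Next I would compute the reduced flow on $W^c$. The subspace $\{U=U'=0\}$ is invariant, and plant directions are strictly unstable, so $\Phi\equiv0$ can be arranged; at least $\Phi(V)=O(|V|^3)$, which is all that is used. Substituting $V_i''=O(|V|^3)$ into the consumer equation gives the reduced equations
\begin{align*}
V_i'=-\frac{e_i}{s}V_i^2+O(|V|^3),\qquad i\in\N.
\end{align*}
The center trajectory $X_c$ must itself tend to $0$ as $\xi\to-\infty$. Near $0$ each reduced component $V_{c,i}$ is nonincreasing whenever $V_{c,i}>0$ dominates; the correction is controlled by $|V_c|^2\le N\max_j V_{c,j}^2$, and I would work with the largest component. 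Consequently any positive center orbit increases as $\xi$ decreases, and cannot converge to $0$ at $-\infty$. So a nontrivial center orbit tending to $0$ backward must have some component that is eventually negative and of size $\gtrsim|V_c|$.

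\emph{Using positivity.} Positivity of the true wave now enters. If $X_c\not\equiv0$, then $|V_c(\xi)|$ decays only algebraically, like $|\xi|^{-1}$, by the reduced equation. This is much slower than the tracking error $Ce^{\eta\xi}$. Hence the sign of each $V_i$ agrees with that of $V_{c,i}$ for $\xi\ll0$ wherever $V_{c,i}$ is not exponentially small. This contradicts $V_i>0$, unless all nonnegligible components of $V_c$ are positive, and that case was excluded above. Therefore $X_c\equiv0$, and $X$ itself decays like $e^{\eta\xi}$, which is exactly \eqref{eq:full-leading-edge-exponential-bound}.

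The bound \eqref{eq:Ui-minus-Vi-L1} is then immediate from $|U_i-V_i|\le Ce^{\eta\xi}$ on $(-\infty,R]$, together with continuity on $[R,0]$.

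\emph{Main obstacle.} I expect the hardest step to be the center-manifold bookkeeping: justifying the exponential tracking in backward time, and making the sign argument rigorous for the coupled $N$-dimensional reduced flow. Components of $V_c$ may decay at different rates, or some may vanish identically, and the cubic remainders must not flip the sign comparison. To handle this I would first try the Lyapunov-type quantity $m(\xi)=\max_iV_{c,i}(\xi)$. It satisfies $m'\le-\frac{e_{\min}}{2s}m^2$ in the sense of Dini derivatives whenever $m>0$ is small, so $m\not\to0^+$ backward. I would then show separately that a component which is negative, or exponentially small, relative to $|V_c|$ is incompatible with $V_i>0$ once the $e^{\eta\xi}$ tracking error is accounted for.
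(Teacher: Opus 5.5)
Your architecture matches the paper's: the first-order system, the center eigenspace $\{U=P=Q=0\}$, asymptotic phase (Knobloch--Aulbach) in reversed time, the reduced flow $V_i'=-\frac{e_i}{s}V_i^2+\dots$, and a contradiction between positivity and the algebraic decay of a nontrivial center orbit. The gap is at the step you yourself flag as the main obstacle, and the tool you propose there does not work as stated.

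For $m=\max_i V_{c,i}$, the remainder in the equation of a maximizing index is bounded only by $C|V_c|^3$. If some other component is negative with $|V_{c,j}|\gg m$ (say $|V_{c,j}|\approx m^{1/2}$), this remainder swamps $\frac{e_i}{s}m^2$. So $m'\le-\frac{e_{\min}}{2s}m^2$ is not justified.

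Your assertion that a nontrivial center orbit tending to $0$ has a component that is eventually negative and of size $\gtrsim|V_c|$ also does not follow. Knowing that orbits with all components positive cannot tend to $0$ is not enough. Finally, your last sentence points the wrong way. A component $V_{c,i}$ that is identically zero or exponentially small is perfectly compatible with $V_i>0$. What you need is a single sufficiently negative $\xi$ at which some $V_{c,i}(\xi)$ is negative and only algebraically small.

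The missing idea is the one the paper uses to make this step trivial. You already noted that $\{U=P=0\}$ is invariant. On it the flow splits into $N$ independent planar systems $V_i'=Q_i$, $Q_i'=\frac{s}{D_i}Q_i+\frac{e_i}{D_i}V_i^2$. So you may choose $W^c$ to be the product of their one-dimensional center manifolds $Q_i=\mathfrak h_i(V_i)$, with $\mathfrak h_i(v)=-\frac{e_i}{s}v^2+O(v^3)$. The asymptotic-phase theorem tracks on the chosen center manifold, so the tracked orbit satisfies exactly the decoupled scalar equations $z_i'=\mathfrak h_i(z_i)$. For each $i$, $z_i\not\equiv0$ gives $1/z_i=\frac{e_i}{s}\xi+o(|\xi|)$, hence $z_i\sim s/(e_i\xi)<0$. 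Then $V_i-z_i=O(e^{\eta\xi})$ contradicts $V_i>0$, and no interaction between components ever arises.

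If you prefer to keep a general center manifold, replace $m$ by $M=\max_i|V_{c,i}|$. Suppose that at some very negative $\xi_1$ the maximal modulus is attained by a positive component. Then the quadratic term dominates $CM^3$, and the positive part of $V_c$ keeps exceeding the negative part and grows as $\xi$ decreases, contradicting $V_c\to0$. So eventually the maximal modulus is attained by a negative component. The backward differential inequality for $M$ then gives $M(\xi)\gtrsim 1/|\xi|$ (using $V_c\not\equiv0$ and uniqueness), which beats the $e^{\eta\xi}$ tracking error at a single point.

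A minor slip: when $s>2\sqrt{d_ir_i}$ the plant eigenvalues are the real numbers $\lambda_i^\pm$, and $\lambda_i^-<s/(2d_i)$. So your $\eta_0$ is wrong, but only the positivity of the spectral gap is used, so this is harmless.
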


\begin{proof}
We divide the proof into five steps.

\medskip
\noindent
\textit{Step 1: First-order formulation and spectral splitting.}
Set
\begin{align*}
P_i:=U_i',
\qquad
Q_i:=V_i',
\qquad i\in\N,
\end{align*}
and write
\begin{align*}
\mathbf X:=(U,P,V,Q)\in\mathbb R^{4N}.
\end{align*}
The traveling-wave system becomes
\begin{equation}
\label{eq:first-order-traveling-wave-system}
\left\{
\begin{aligned}
U_i'&=P_i,\\
P_i'
&=
\frac{s}{d_i}P_i
-\frac{r_i}{d_i}U_i
+\frac{r_i}{d_i}U_i
\left(
 U_i+\kappa_{ii}V_i
 +\sum_{\substack{j\in\N,\,j\neq i}}
  \kappa_{ij}U_j
\right),\\
V_i'&=Q_i,\\
Q_i'
&=
\frac{s}{D_i}Q_i
-\frac{e_i}{D_i}U_iV_i
+\frac{e_i}{D_i}V_i^2,
\end{aligned}
\right.
\qquad i\in\N.
\end{equation}
Thus,
\begin{align}
\label{eq:X-linear-plus-quadratic}
\mathbf X'=L\mathbf X+\mathcal F(\mathbf X),
\qquad
\mathcal F(\mathbf X)=O(\|\mathbf X\|^2)
\quad\text{as}\quad\mathbf X\to0.
\end{align}
The plant block of $L$ corresponding to $(U_i,P_i)$ is
\begin{align*}
L_i^{\rm p}
=
\begin{pmatrix}
0&1\\
-r_i/d_i&s/d_i
\end{pmatrix},
\end{align*}
whose eigenvalues are
\begin{align}
\label{eq:plant-linear-eigenvalues}
\lambda_i^{\pm}
=
\frac{s\pm\sqrt{s^2-4d_ir_i}}{2d_i}.
\end{align}
In either the real or the nonreal case,
\begin{align*}
\operatorname{Re}\lambda_i^{\pm}>0
\qquad\text{because }s>0.
\end{align*}
The consumer block corresponding to $(V_i,Q_i)$ is
\begin{align*}
L_i^{\rm c}
=
\begin{pmatrix}
0&1\\
0&s/D_i
\end{pmatrix},
\end{align*}
with eigenvalues
\begin{align*}
0,
\qquad
\frac{s}{D_i}>0.
\end{align*}
Consequently, the center eigenspace of $L$ is
\begin{align}
\label{eq:center-eigenspace-original-system}
E^{\rm c}
=
\left\{
(U,P,V,Q)\in\mathbb R^{4N}:
U=0,\ P=0,\ Q=0
\right\}.
\end{align}
All remaining eigenvalues of $L$ have strictly positive real parts.

\medskip
\noindent
\textit{Step 2: Construction of a convenient center manifold.}
Consider the subspace
\begin{align}
\label{eq:invariant-subspace-S}
\mathfrak S
:=
\left\{
(U,P,V,Q)\in\mathbb R^{4N}:
U=0,\ P=0
\right\}.
\end{align}
It is invariant under \eqref{eq:first-order-traveling-wave-system}.
Indeed, if $U=P=0$, then the first two equations in
\eqref{eq:first-order-traveling-wave-system} give $U'=P'=0$.
The restriction of \eqref{eq:first-order-traveling-wave-system} to
$\mathfrak S$ is
\begin{equation}
\label{eq:restricted-consumer-system}
\left\{
\begin{aligned}
V_i'&=Q_i,\\
Q_i'&=\frac{s}{D_i}Q_i+\frac{e_i}{D_i}V_i^2,
\end{aligned}
\right.
\qquad i\in\N.
\end{equation}
For each $i\in\N$, the center-manifold theorem applied to the
corresponding two-dimensional subsystem yields a local center manifold
of the form
\begin{align*}
Q_i=\mathfrak{h}_i(V_i),
\qquad
\mathfrak{h}_i(0)=\mathfrak{h}_i'(0)=0.
\end{align*}
Its invariance equation is
\begin{align}
\label{eq:hi-invariance-equation}
\mathfrak{h}_i'(v)\mathfrak{h}_i(v)
=
\frac{s}{D_i}\mathfrak{h}_i(v)
+
\frac{e_i}{D_i}v^2.
\end{align}
Writing
\begin{align*}
\mathfrak{h}_i(v)=\chi_iv^2+O(v^3)
\qquad\text{as}\qquad v\to0
\end{align*}
and comparing the quadratic terms in
\eqref{eq:hi-invariance-equation}, we obtain
\begin{align*}
0
=
\frac{s}{D_i}\chi_i+\frac{e_i}{D_i}.
\end{align*}
Therefore,
\begin{align}
\label{eq:hi-quadratic-expansion}
\mathfrak{h}_i(v)
=
-\frac{e_i}{s}v^2+O(v^3)
\qquad\text{as }v\to0.
\end{align}
Set
\begin{align*}
\mathfrak{h}(V):=(\mathfrak{h}_1(V_1),\ldots,\mathfrak{h}_N(V_N))
\end{align*}
and define
\begin{align}
\label{eq:chosen-center-manifold}
W^{\rm c}
:=
\left\{
(U,P,V,Q):
U=0,\ P=0,\ Q=\mathfrak{h}(V)
\right\}.
\end{align}
This is a local invariant manifold for the full system because
$\mathfrak S$ is invariant.  Moreover,
\begin{align*}
T_0W^{\rm c}
=
\{U=0,\ P=0,\ Q=0\}
=
E^{\rm c}.
\end{align*}
Thus, $W^{\rm c}$ is a local center manifold of the full system.
Notice that $\mathfrak S$ itself is not a center manifold: it has
dimension $2N$, whereas $\dim E^{\rm c}=N$.

\medskip
\noindent
\textit{Step 3: Exponential tracking of an orbit on the chosen local
center manifold.}
Introduce the reversed independent variable
\begin{align*}
\tau:=-\xi,
\qquad
\widetilde{\mathbf X}(\tau):=\mathbf X(-\tau).
\end{align*}
Then
\begin{align}
\label{eq:reversed-first-order-system}
\dot{\widetilde{\mathbf X}}
=
-L\widetilde{\mathbf X}
-
\mathcal F(\widetilde{\mathbf X}).
\end{align}
The spectrum of $-L$ consists of $N$ zero eigenvalues and $3N$
eigenvalues with strictly negative real parts.  We next write down an
explicit linear change of coordinates which separates these stable and
center directions.

Set
\begin{align*}
\mathcal D
:=
\operatorname{diag}\left(\frac{D_1}{s},\ldots,\frac{D_N}{s}\right)
\end{align*}
and define
\begin{align}
\label{eq:explicit-stable-center-coordinate-change}
x
&:=
(\widetilde U,\widetilde P,\widetilde Q)
\in\mathbb R^{3N},
\\
y
&:=
\widetilde V-\mathcal D\widetilde Q
\in\mathbb R^N.
\end{align}
This is a fixed invertible linear change of coordinates, since
\begin{align*}
\widetilde V=y+\mathcal D\widetilde Q.
\end{align*}
In these coordinates, \eqref{eq:reversed-first-order-system} takes the
stable--center form
\begin{equation}
\label{eq:KA-autonomous-form}
\left\{
\begin{aligned}
\dot x&=A^-x+p(x,y),\\
\dot y&=q(x,y),
\end{aligned}
\right.
\end{equation}
where
\begin{align}
\label{eq:stable-linear-block}
A^-
=
\operatorname{diag}
\left(
A_1^{\mathrm{p},-},\ldots,A_N^{\mathrm{p},-},
-\frac{s}{D_1},\ldots,-\frac{s}{D_N}
\right),
\qquad
A_i^{\mathrm{p},-}
:=
\begin{pmatrix}
0&-1\\
r_i/d_i&-s/d_i
\end{pmatrix}.
\end{align}
Indeed, the eigenvalues of $A_i^{\mathrm{p},-}$ are
$-\lambda_i^\pm$, where $\lambda_i^\pm$ are defined in
\eqref{eq:plant-linear-eigenvalues}.  Hence
\begin{align*}
\operatorname{Re}\sigma(A^-)<0.
\end{align*}
Moreover, the nonlinearities satisfy
\begin{align}
\label{eq:pq-quadratic-order}
p(0,0)=q(0,0)=0,
\qquad
Dp(0,0)=Dq(0,0)=0.
\end{align}
For example, the $i$-th center equation is
\begin{align*}
\dot y_i=
\dot{\widetilde V}_i-\frac{D_i}{s}\dot{\widetilde Q}_i=
-\frac{e_i}{s}\widetilde V_i
\bigl(\widetilde U_i-\widetilde V_i\bigr),
\end{align*}
which has no linear term.  The remaining nonlinear terms are also at
least quadratic because
$\mathcal F(\mathbf X)=O(\lVert\mathbf X\rVert^2)$.

We now verify that the specific local center manifold constructed in
Step~2 is an admissible chosen center manifold for
\eqref{eq:KA-autonomous-form}.  Reversing time does not change
invariant sets.  On $W^{\rm c}$, we have
\begin{align*}
U=P=0,
\qquad
Q=\mathfrak{h}(V).
\end{align*}
Consequently, in the coordinates \eqref{eq:explicit-stable-center-coordinate-change},
$W^{\rm c}$ is parametrized by
\begin{align*}
x
&=
(0,0,\mathfrak{h}(V)),
\\
y
&=
\Psi(V)
:=
V-\mathcal D \mathfrak{h}(V).
\end{align*}
Since $\mathfrak{h}(0)=0$ and $D\mathfrak{h}(0)=0$, we have
\begin{align*}
\Psi(0)=0,
\qquad
D\Psi(0)=I.
\end{align*}
The inverse function theorem therefore yields neighborhoods
$\mathcal V,\mathcal Y\subset\mathbb R^N$ of the origin such that
$\Psi:\mathcal V\to\mathcal Y$ is a $C^1$ diffeomorphism.  Denote its
local inverse by $\psi$.  In the stable--center coordinates, the same
chosen manifold is thus the graph
\begin{align}
\label{eq:chosen-center-manifold-in-KA-coordinates}
x=H(y),
\qquad
H(y)
:=
\bigl(0,0,\mathfrak{h}(\psi(y))\bigr),
\qquad
y\in\mathcal Y.
\end{align}
In particular,
\begin{align*}
H(0)=0,
\qquad
DH(0)=0.
\end{align*}

We apply the autonomous asymptotic-phase theorem of Knobloch and
Aulbach \cite[Theorem~3]{knobloch1982role} to
\eqref{eq:KA-autonomous-form} with the chosen local center manifold
\eqref{eq:chosen-center-manifold-in-KA-coordinates}.  In the notation of
that theorem, $x$ is the stable variable, $y$ is the center variable,
and the reduced equation is
\begin{align}
\label{eq:KA-reduced-equation}
\dot{\overline y}
=
q\bigl(H(\overline y),\overline y\bigr).
\end{align}
The theorem states that, for every sufficiently small initial state,
there exists an initial center phase for
\eqref{eq:KA-reduced-equation} such that the corresponding orbit on the
chosen center manifold tracks the full orbit with an exponentially decaying error.  For a precise formulation and proof of the underlying
reduction principle, see also \cite{Aulbach1982Reduction}.

We use this result in its standard local sense.  Equivalently, one may
first multiply $p$ and $q$ by a smooth cutoff which is identically one
in a sufficiently small neighborhood of the origin.  Since
\eqref{eq:leading-edge-zero-limit} gives
\begin{align*}
\widetilde{\mathbf X}(\tau)\longrightarrow0
\qquad
\text{as }\tau\to+\infty,
\end{align*}
we may choose $\tau_0>0$ so large that the shifted orbit
$\widetilde{\mathbf X}(\tau_0+\cdot)$ lies in this neighborhood.  The
tracking estimate below implies that the tracked center orbit also
tends to zero; after increasing $\tau_0$ if necessary, both trajectories
remain in the region where the cutoff is identically one.  Hence the
cutoff does not alter either trajectory on the tail under consideration.

To spell out the application of the theorem, let
\begin{align*}
(\widehat x(t),\widehat y(t))
:=
\mathcal T\widetilde{\mathbf X}(\tau_0+t),
\qquad t\geq0,
\end{align*}
where $\mathcal T$ denotes the linear transformation in
\eqref{eq:explicit-stable-center-coordinate-change}.  By
\cite[Theorem~3]{knobloch1982role}, there exists a solution
$\overline y(t)$ of \eqref{eq:KA-reduced-equation} and constants
$C_1>0$, $\eta>0$ such that
\begin{align}
\label{eq:KA-tracking-estimate-in-split-coordinates}
\lVert\widehat x(t)-H(\overline y(t))\rVert
+
\lVert\widehat y(t)-\overline y(t)\rVert
\leq
C_1e^{-\eta t},
\qquad t\geq0.
\end{align}
Define the corresponding trajectory on the chosen center manifold by
\begin{align*}
\widetilde{\mathbf Z}(\tau_0+t)
:=
\mathcal T^{-1}
\bigl(H(\overline y(t)),\overline y(t)\bigr),
\qquad t\geq0.
\end{align*}
Since $\mathcal T^{-1}$ is a fixed linear map,
\eqref{eq:KA-tracking-estimate-in-split-coordinates} yields, after
changing the constant,
\begin{align}
\label{eq:tracking-estimate-reversed-time}
\lVert\widetilde{\mathbf X}(\tau)
-
\widetilde{\mathbf Z}(\tau)\rVert
\leq
C_0e^{-\eta(\tau-\tau_0)},
\qquad
\tau\geq\tau_0.
\end{align}
Let
\begin{align*}
R:=-\tau_0,
\qquad
\mathbf Z(\xi):=\widetilde{\mathbf Z}(-\xi).
\end{align*}
Then \eqref{eq:tracking-estimate-reversed-time} becomes
\begin{align}
\label{eq:tracking-estimate-original-time}
\lVert\mathbf X(\xi)-\mathbf Z(\xi)\rVert
\leq
C_0e^{\eta(\xi-R)},
\qquad
\xi\leq R.
\end{align}
Since $\mathbf X(\xi)\to0$ as $\xi\to-\infty$, it follows from
\eqref{eq:tracking-estimate-original-time} that
\begin{align}
\label{eq:tracked-center-orbit-zero-limit}
\mathbf Z(\xi)\longrightarrow0
\qquad
\text{as }\xi\to-\infty.
\end{align}

\medskip
\noindent
\textit{Step 4: Positivity excludes every nonzero center orbit.}
Because $\mathbf Z$ lies on the chosen manifold $W^{\rm c}$, it has the
form
\begin{align*}
\mathbf Z(\xi)
=
(0,0,z(\xi),\mathfrak{h}(z(\xi))),
\qquad
\xi\leq R.
\end{align*}
Moreover, $\mathbf Z$ is a trajectory of the original $\xi$-system.
Therefore,
\begin{align}
\label{eq:center-reduced-equation}
z_i'
=
\mathfrak{h}_i(z_i)
=
-\frac{e_i}{s}z_i^2+O(z_i^3),
\qquad
i\in\N.
\end{align}
By \eqref{eq:tracked-center-orbit-zero-limit},
\begin{align*}
z_i(\xi)\longrightarrow0
\qquad
\text{as }\xi\to-\infty.
\end{align*}

Suppose, by contradiction, that $z_i\not\equiv0$ for some $i\in\N$.
By uniqueness for the scalar equation $z_i'=\mathfrak{h}_i(z_i)$, the function
$z_i$ cannot vanish at any point of $(-\infty,R]$.  Hence
$1/z_i$ is well-defined on this interval.  From
\eqref{eq:center-reduced-equation},
\begin{align}
\label{eq:reciprocal-center-equation}
\left(\frac{1}{z_i}\right)'
=
-\frac{z_i'}{z_i^2}
=
\frac{e_i}{s}+O(z_i)
\longrightarrow
\frac{e_i}{s}
\qquad
\text{as }\xi\to-\infty.
\end{align}
For completeness, fix $R_1\leq R$ and write
\begin{align*}
g_i(\xi)
:=
\left(\frac{1}{z_i(\xi)}\right)'
-
\frac{e_i}{s}.
\end{align*}
Then $g_i(\xi)\to0$ as $\xi\to-\infty$, and for every
$\xi\leq R_1$,
\begin{align*}
\frac{1}{z_i(\xi)}=
\frac{1}{z_i(R_1)}
-
\int_{\xi}^{R_1}
\left(\frac{e_i}{s}+g_i(t)\right)\,dt=
\frac{e_i}{s}\xi
+
\left(
\frac{1}{z_i(R_1)}-
\frac{e_i}{s}R_1
\right)
-
\int_{\xi}^{R_1}g_i(t)\,dt.
\end{align*}
Since $g_i(t)\to0$ as $t\to-\infty$, the last integral is
$o(|\xi|)$ as $\xi\to-\infty$.  Consequently,
\begin{align}
\label{eq:center-orbit-algebraic-asymptotics}
\frac{1}{z_i(\xi)}
&=
\frac{e_i}{s}\xi+o(|\xi|),
\\
z_i(\xi)
&=
\frac{s}{e_i\xi}(1+o(1))
\qquad
\text{as}\qquad \xi\to-\infty.
\end{align}
In particular,
\begin{align*}
z_i(\xi)<0
\qquad
\text{for all sufficiently negative }\xi.
\end{align*}
On the other hand, \eqref{eq:tracking-estimate-original-time} implies
componentwise that
\begin{align*}
V_i(\xi)-z_i(\xi)
=
O(e^{\eta\xi})
=
o(|\xi|^{-1})
\qquad
\text{as }\xi\to-\infty.
\end{align*}
Combining this estimate with
\eqref{eq:center-orbit-algebraic-asymptotics}, we obtain
\begin{align*}
V_i(\xi)
=
\frac{s}{e_i\xi}(1+o(1))<0
\end{align*}
for all sufficiently negative $\xi$.  This contradicts the strict
positivity of $V_i$.  Therefore,
\begin{align*}
z_i\equiv0
\qquad
\text{for every }i\in\N.
\end{align*}
Since $\mathfrak{h}(0)=0$, we conclude that
\begin{align}
\label{eq:center-orbit-zero}
\mathbf Z\equiv0.
\end{align}

\medskip
\noindent
\textit{Step 5: Exponential decay and integrability.}
Combining \eqref{eq:tracking-estimate-original-time} and
\eqref{eq:center-orbit-zero}, we obtain
\begin{align*}
\|\mathbf X(\xi)\|
\leq
C_0e^{\eta(\xi-R)},
\qquad
\xi\leq R.
\end{align*}
After enlarging the constant, this proves
\eqref{eq:full-leading-edge-exponential-bound}.
In particular,
\begin{align*}
|U_i(\xi)-V_i(\xi)|
\leq
C e^{\eta\xi},
\qquad
\xi\leq R.
\end{align*}
Since $U_i-V_i$ is continuous on the compact interval $[R,0]$, we
obtain
\begin{align*}
U_i-V_i\in L^1(( -\infty,0]),
\end{align*}
which is \eqref{eq:Ui-minus-Vi-L1}.
\end{proof}

\begin{lemma}
\label{lem:degenerate-leading-edge}
Let $\beta>0$ and assume that
\begin{align*}
a\in C((-\infty,0])\cap L^1((-\infty,0]).
\end{align*}
Suppose that $V\not\equiv0$ is a solution of
\begin{equation}\label{eq:degenerate-linear-equation}
V''-\beta V'+a(\xi)V=0,
\qquad
\xi\in(-\infty,0],
\end{equation}
satisfying
\begin{align*}
\lim_{\xi\to-\infty}V(\xi)=0.
\end{align*}
Then there exists a constant $A\neq0$ such that
\begin{align*}
V(\xi)
=
Ae^{\beta\xi}(1+o(1)),
\qquad
V'(\xi)
=
\beta Ae^{\beta\xi}(1+o(1)),
\qquad
\xi\to-\infty.
\end{align*}
Consequently,
\begin{align*}
\lim_{\xi\to-\infty}
\frac{V'(\xi)}{V(\xi)}
=
\beta.
\end{align*}
If $V(\xi)>0$ for every $\xi\leq0$, then $A>0$.
\end{lemma}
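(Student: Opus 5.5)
The plan is to treat \eqref{eq:degenerate-linear-equation} as an $L^1$-perturbation of $V''-\beta V'=0$, whose solutions are $1$ and $e^{\beta\xi}$. The hypothesis $V\to0$ will be used to kill the constant mode, and a Levinson-type integral equation will then show that $e^{-\beta\xi}V$ has a nonzero limit. Throughout I write
\[
w:=V'-\beta V,\qquad u(\xi):=e^{-\beta\xi}V(\xi),
\]
so that the equation becomes the first-order pair
\[
w'=-a(\xi)V,\qquad u'=e^{-\beta\xi}w .
\]

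\textbf{Step 1: the constant mode vanishes.} Since $V$ is continuous and tends to $0$, it is bounded on $(-\infty,0]$. Because $a\in L^1$, we have $w'=-aV\in L^1((-\infty,0])$, so $w(\xi)\to w_-$ as $\xi\to-\infty$ for some finite $w_-$. Suppose $w_-\neq0$. The identity $V'=\beta V+w$ together with $V\to0$ gives $V'\to w_-\neq0$. Integrating, $|V|$ would grow linearly at $-\infty$, which is a contradiction. Hence $w_-=0$, and
\[
w(\xi)=-\int_{-\infty}^{\xi}a(r)V(r)\,dr=-\int_{-\infty}^{\xi}a(r)e^{\beta r}u(r)\,dr .
\]

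\textbf{Step 2: $u$ is bounded (the main obstacle).} First choose $R\le0$ with $\int_{-\infty}^{R}|a|\le\beta/4$. On $(-\infty,R]$ write
\[
u(\xi)=u(R)-\int_\xi^R e^{-\beta t}w(t)\,dt .
\]
Insert the formula for $w$ and apply Fubini, using
\[
\int_{\max(r,\xi)}^{R}e^{-\beta t}\,dt\le\beta^{-1}e^{-\beta\max(r,\xi)} .
\]
This gives
\[
|u(\xi)|\le|u(R)|+\beta^{-1}\int_{-\infty}^{R}|a(r)|\,|u(r)|\,e^{\beta(r-\max(r,\xi))}\,dr .
\]
The difficulty is that for $r<\xi$ the weight is $e^{\beta(r-\xi)}|u(r)|=e^{-\beta\xi}|V(r)|$, so a priori we do not control it by $\sup|u|$. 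To handle this I would work with the weighted quantity
\[
M(\xi):=\sup_{r\le\xi}\,e^{-\beta\xi}|V(r)|\ \ge |u(\xi)| ,
\]
which is finite for each $\xi$ because $V$ is bounded. The estimate above then bounds $M$ by $|u(R)|+\tfrac14\sup_{[\xi,R]}M$ plus comparable terms. Absorbing gives $\sup_{(-\infty,R]}|u|<\infty$.

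If this bookkeeping becomes messy, the fallback is to invoke Levinson's asymptotic theorem directly. Apply it to the system $(V,w)$ with the $L^1$ perturbation matrix $\begin{pmatrix}0&0\\-a&0\end{pmatrix}$, whose unperturbed eigenvalues $0$ and $\beta$ are distinct. It gives a fundamental system asymptotic to $(1,0)$ and $(e^{\beta\xi},\cdot)$, and Step 1 selects the second solution.

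\textbf{Step 3: limits and the sign of $A$.} Once $|u|\le C$, we get
\[
|w(\xi)|\le Ce^{\beta\xi}\int_{-\infty}^{\xi}|a| ,
\]
so $e^{-\beta t}w\in L^1$ and $u(\xi)\to A:=u(R)-\int_{-\infty}^{R}e^{-\beta t}w\,dt$. This is $V=Ae^{\beta\xi}(1+o(1))$. Since $w=o(e^{\beta\xi})$, we also get $V'=\beta V+w=\beta Ae^{\beta\xi}(1+o(1))$.

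To see $A\ne0$, suppose $A=0$. Then
\[
u(\xi)=\int_{-\infty}^{\xi}e^{-\beta t}w(t)\,dt ,
\]
and the same Fubini estimate gives
\[
\sup_{(-\infty,\xi]}|u|\le\beta^{-1}\Big(\int_{-\infty}^{\xi}|a|\Big)\sup_{(-\infty,\xi]}|u|\le\tfrac14\sup_{(-\infty,\xi]}|u| .
\]
Hence $u\equiv0$ on $(-\infty,R]$, and ODE uniqueness gives $V\equiv0$, a contradiction. The ratio limit $V'/V\to\beta$ follows immediately. Finally, if $V>0$ on $(-\infty,0]$ then $u>0$, so $A\ge0$, and since $A\neq0$ we get $A>0$.
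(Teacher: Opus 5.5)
Your proof is correct in outline, and it takes a different route from the paper.

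\textbf{Comparison of routes.} The paper never estimates the given $V$ directly. It produces the decaying solution $\phi=e^{\beta\xi}y$ as the fixed point of a contraction in $C_b((-\infty,R_0])$, with $y\to1$ and $y'\to0$. It then builds a second solution $\psi=\beta\phi\int_\xi^{R_0}e^{\beta t}\phi^{-2}\,dt$ by reduction of order, with $\psi\to1$. Writing $V=A\phi+B\psi$ and using $V\to0$ forces $B=0$, and $A\neq0$ is then automatic from $V\not\equiv0$.

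You instead work a priori with $V$. The limit of $w=V'-\beta V$ exists because $w'=-aV\in L^1$, and $V\to0$ forces that limit to vanish; this step does the job of the paper's $\psi$. After that you need boundedness of $u=e^{-\beta\xi}V$ and a separate argument for $A\neq0$.

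The paper's route puts all the analysis into one contraction and yields a full fundamental system. Yours avoids the second solution and the l'H\^opital computation, but pays with the weighted estimate in Step~2.

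\textbf{Step~2 closes, but write it out.} For $\rho\le\xi\le R$, multiply your Fubini bound for $|u(\rho)|$ by $e^{-\beta(\xi-\rho)}\le1$ and split the $r$-integral at $\xi$. The part with $r\le\xi$ is at most $\beta^{-1}M(\xi)\int_{-\infty}^{\xi}|a|$. The part with $r>\xi$ is at most $\beta^{-1}\int_\xi^R|a(r)|M(r)\,dr$. Hence
\[
M(\xi)\le|u(R)|+\tfrac14M(\xi)+\tfrac14\sup_{[\xi,R]}M .
\]
Since $\sup_{[\xi,R]}M\le e^{-\beta\xi}\sup|V|<\infty$, taking the supremum over $[\xi,R]$ gives $M\le2|u(R)|$ on $(-\infty,R]$.

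\textbf{An error in Step~3.} One inference there is wrong as stated. From $|w(\xi)|\le Ce^{\beta\xi}\int_{-\infty}^{\xi}|a|$ you only get $|e^{-\beta t}w(t)|\le C\int_{-\infty}^{t}|a|$. That bound tends to $0$ but need not be integrable when $a$ is merely in $L^1$ (e.g.\ $|a(t)|\sim t^{-2}$). Integrability has to come from the same Fubini exchange you use elsewhere:
\[
\int_{-\infty}^{R}e^{-\beta t}|w(t)|\,dt\le C\int_{-\infty}^{R}|a(r)|e^{\beta r}\int_r^{R}e^{-\beta t}\,dt\,dr\le\frac{C}{\beta}\int_{-\infty}^{R}|a| .
\]
With that repair, the existence of $A$, your argument for $A\neq0$, and the sign of $A$ are all fine.

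A minor point on the Levinson fallback: the zero-eigenvalue mode of $\begin{pmatrix}\beta&1\\0&0\end{pmatrix}$ is $(1,-\beta)$, not $(1,0)$. This does not affect the argument.
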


\begin{proof}
Choose $R_0<0$ sufficiently negative so that
\begin{equation}\label{eq:small-tail-a}
\alpha_{R_0}
:=
\frac{1}{\beta}
\int_{-\infty}^{R_0}|a(t)|\,dt
<
\frac12.
\end{equation}
Let
\begin{align*}
X:=C_b((-\infty,R_0])
\end{align*}
be the Banach space of bounded continuous functions on
$(-\infty,R_0]$, equipped with the supremum norm.  Define
$\mathcal T:X\to X$ by
\begin{equation}\label{eq:fixed-point-operator-leading-edge}
(\mathcal Ty)(\xi)
:=
1-\frac{1}{\beta}
\int_{-\infty}^{\xi}
\left(
1-e^{-\beta(\xi-t)}
\right)
a(t)y(t)\,dt.
\end{equation}
The map $\mathcal T$ is well-defined.  Indeed, for every $y\in X$,
\begin{align*}
\begin{aligned}
|(\mathcal Ty)(\xi)|
\leq
1+
\frac{1}{\beta}
\int_{-\infty}^{\xi}
|a(t)|\,|y(t)|\,dt\leq
1+\alpha_{R_0}\|y\|_\infty,
\qquad
\xi\leq R_0.
\end{aligned}
\end{align*}
Hence $\mathcal Ty$ is bounded.  Its continuity follows from
\eqref{eq:fixed-point-operator-leading-edge} and the dominated
convergence theorem.

For any $y_1,y_2\in X$, we have
\begin{align*}
\begin{aligned}
|(\mathcal Ty_1)(\xi)-(\mathcal Ty_2)(\xi)|
\leq
\frac{1}{\beta}
\int_{-\infty}^{\xi}
|a(t)|\,|y_1(t)-y_2(t)|\,dt\leq
\alpha_{R_0}
\|y_1-y_2\|_\infty.
\end{aligned}
\end{align*}
Taking the supremum over $\xi\leq R_0$, we obtain
\begin{align*}
\|\mathcal Ty_1-\mathcal Ty_2\|_\infty
\leq
\alpha_{R_0}
\|y_1-y_2\|_\infty.
\end{align*}
Since $\alpha_{R_0}<1$, the map $\mathcal T$ is a contraction.
Therefore, by the Banach fixed-point theorem, there exists a unique
$y\in X$ such that
\begin{equation}\label{eq:y-fixed-point}
y=\mathcal Ty.
\end{equation}

Using \eqref{eq:y-fixed-point}, we estimate
\begin{align*}
|y(\xi)-1|
\leq
\frac{\|y\|_\infty}{\beta}
\int_{-\infty}^{\xi}|a(t)|\,dt.
\end{align*}
Since $a\in L^1((-\infty,0])$, it follows that
\begin{equation}\label{eq:y-limit}
\lim_{\xi\to-\infty}y(\xi)=1.
\end{equation}
Differentiating \eqref{eq:y-fixed-point}, we obtain
\begin{equation}\label{eq:y-prime-formula}
y'(\xi)
=
-
\int_{-\infty}^{\xi}
e^{-\beta(\xi-t)}
a(t)y(t)\,dt.
\end{equation}
Consequently,
\begin{align*}
|y'(\xi)|
\leq
\|y\|_\infty
\int_{-\infty}^{\xi}|a(t)|\,dt,
\end{align*}
and hence
\begin{equation}\label{eq:y-prime-limit}
\lim_{\xi\to-\infty}y'(\xi)=0.
\end{equation}
Differentiating \eqref{eq:y-prime-formula} once more yields
\begin{align*}
y''(\xi)
=
-a(\xi)y(\xi)-\beta y'(\xi).
\end{align*}
Therefore,
\begin{equation}\label{eq:y-equation}
y''+\beta y'+a(\xi)y=0.
\end{equation}

Define
\begin{align*}
\phi(\xi):=e^{\beta\xi}y(\xi).
\end{align*}
Using \eqref{eq:y-equation}, we calculate
\begin{align*}
\begin{aligned}
\phi''-\beta\phi'+a(\xi)\phi=
e^{\beta\xi}
\left(
y''+\beta y'+a(\xi)y
\right)=0.
\end{aligned}
\end{align*}
Thus $\phi$ solves \eqref{eq:degenerate-linear-equation}.  Moreover,
by \eqref{eq:y-limit} and \eqref{eq:y-prime-limit},
\begin{equation}\label{eq:phi-asymptotics}
\phi(\xi)
=
e^{\beta\xi}(1+o(1)),
\qquad
\phi'(\xi)
=
\beta e^{\beta\xi}(1+o(1)),
\qquad
\xi\to-\infty.
\end{equation}

Next, let us construct a second linearly independent solution. By \eqref{eq:y-limit}, after decreasing $R_0$ if necessary, we may
assume that
\begin{align*}
y(\xi)\geq\frac12,
\qquad
\xi\leq R_0.
\end{align*}
In particular,
\begin{align*}
\phi(\xi)\neq0,
\qquad
\xi\leq R_0.
\end{align*}
Define
\begin{equation}\label{eq:psi-definition}
\psi(\xi)
:=
\beta\phi(\xi)I(\xi) \qquad\text{with}\qquad I(\xi)
:=
\int_{\xi}^{R_0}
\frac{e^{\beta t}}{\phi(t)^2}\,dt.
\end{equation}
We now verify directly that $\psi$ is also a solution of
\eqref{eq:degenerate-linear-equation}.  Since
\begin{align*}
I'(\xi)
=
-\frac{e^{\beta\xi}}{\phi(\xi)^2} \qquad\text{and}\qquad I''(\xi)
=
\left(
2\frac{\phi'(\xi)}{\phi(\xi)}
-\beta
\right)
\frac{e^{\beta\xi}}{\phi(\xi)^2},
\end{align*}
we have
\begin{equation}\label{eq:I-identity}
\phi I''
+
(2\phi'-\beta\phi)I'
=
0.
\end{equation}
Let
\begin{align*}
\mathcal L w
:=
w''-\beta w'+a(\xi)w.
\end{align*}
Since $\mathcal L\phi=0$, we calculate
\begin{align*}
\begin{aligned}
\mathcal L\psi=
\beta\mathcal L(\phi I)=
\beta
\left[
I\mathcal L\phi
+
\phi I''
+
(2\phi'-\beta\phi)I'
\right]=0
\end{aligned}
\end{align*}
by \eqref{eq:I-identity}.  Hence $\psi$ is a second solution.

To verify linear independence, we compute the Wronskian:
\begin{align*}
\begin{aligned}
W(\phi,\psi):=
\phi\psi'-\phi'\psi=
\beta\phi^2 I'=
-\beta e^{\beta\xi}.
\end{aligned}
\end{align*}
Since
\begin{align*}
W(\phi,\psi)(\xi)\neq0
\qquad
\text{for every }\xi\leq R_0,
\end{align*}
the functions $\phi$ and $\psi$ are linearly independent. It remains to determine the asymptotic behavior of $\psi$.  From
\eqref{eq:phi-asymptotics},
\begin{align*}
\frac{e^{\beta\xi}}{\phi(\xi)^2}
=
e^{-\beta\xi}(1+o(1)).
\end{align*}
Thus
\begin{align*}
I(\xi)
=
\int_{\xi}^{R_0}
\frac{e^{\beta t}}{\phi(t)^2}\,dt
\longrightarrow+\infty
\qquad\text{as}\qquad \xi\to-\infty.
\end{align*}
Applying l'Hopital's rule, we obtain
\begin{align*}
\begin{aligned}
\lim_{\xi\to-\infty}
\frac{I(\xi)}{\beta^{-1}e^{-\beta\xi}}=
\lim_{\xi\to-\infty}
\frac{-e^{\beta\xi}/\phi(\xi)^2}
{-e^{-\beta\xi}}=
\lim_{\xi\to-\infty}
\frac{e^{2\beta\xi}}{\phi(\xi)^2}=1.
\end{aligned}
\end{align*}
Therefore,
\begin{equation}\label{eq:psi-asymptotics}
\psi(\xi)
=
\beta\phi(\xi)I(\xi)
=
1+o(1),
\qquad
\xi\to-\infty.
\end{equation}

Finally, since $\phi$ and $\psi$ are linearly independent solutions of the
second-order equation \eqref{eq:degenerate-linear-equation}, they form
a fundamental system on $(-\infty,R_0]$.  Hence there exist constants
$A,B\in\mathbb R$ such that
\begin{align*}
V=A\phi+B\psi.
\end{align*}
Using
\begin{align*}
V(\xi)\to0,
\qquad
\phi(\xi)\to0,
\qquad
\psi(\xi)\to1
\qquad\text{as}\qquad\xi\to-\infty,
\end{align*}
we conclude that $B=0$. Therefore, $V=A\phi$. Since $V\not\equiv0$, we have $A\neq0$.  By
\eqref{eq:phi-asymptotics},
\begin{align*}
V(\xi)
=
Ae^{\beta\xi}(1+o(1)),
\qquad
V'(\xi)
=
\beta Ae^{\beta\xi}(1+o(1)),
\qquad
\xi\to-\infty.
\end{align*}
It follows that
\begin{align*}
\lim_{\xi\to-\infty}
\frac{V'(\xi)}{V(\xi)}
=
\beta.
\end{align*}
Finally, if $V$ is positive, then $\phi$ is positive for sufficiently
negative $\xi$, and hence $A>0$.
\end{proof}

\begin{proof}[Proof of Theorem~\ref{thm:upper-speed-nonexistence}]
We first observe that
\begin{equation}\label{eq:Ui-less-than-one}
0<U_i(\xi)<1,
\qquad
\xi\in\mathbb R,\quad i\in\N.
\end{equation}
Indeed, positivity follows from the assumptions.  Suppose that
$U_i(\xi_0)\geq1$ at a global maximum point $\xi_0$.  Since
\begin{align*}
U_i'(\xi_0)=0,
\qquad
U_i''(\xi_0)\leq0,
\end{align*}
the $i$-th plant equation gives
\begin{align*}
\begin{aligned}
0=
d_iU_i''(\xi_0)
-sU_i'(\xi_0)+
r_iU_i(\xi_0)
\left(
1-U_i(\xi_0)
-\kappa_{ii}V_i(\xi_0)
-\sum_{\substack{j\in\N,\,j\neq i}}
 \kappa_{ij}U_j(\xi_0)
\right)<0,
\end{aligned}
\end{align*}
which is a contradiction.  Thus \eqref{eq:Ui-less-than-one} holds.

Fix $i\in\N$ and define
\begin{align*}
q_i(\xi):=\frac{V_i'(\xi)}{V_i(\xi)}.
\end{align*}
This function is well-defined and continuously differentiable because
$V_i(\xi)>0$ for every $\xi\in\mathbb R$.  Dividing the $i$-th
consumer equation by $D_iV_i$, we obtain
\begin{align*}
\frac{V_i''}{V_i}
-\frac{s}{D_i}\frac{V_i'}{V_i}
+
\frac{e_i}{D_i}(U_i-V_i)
=0.
\end{align*}
Since
\begin{align*}
\frac{V_i''}{V_i}
=
q_i'+q_i^2,
\end{align*}
it follows that
\begin{equation}\label{eq:consumer-riccati}
q_i'
=
\frac{s}{D_i}q_i
-q_i^2
-\frac{e_i}{D_i}(U_i-V_i).
\end{equation}

We next determine the limits of $q_i$ at the two ends of the real
line. By Lemma~\ref{lem:leading-edge-exponential-decay}, we have
\begin{align*}
a_i(\xi)
:=
\frac{e_i}{D_i}\bigl(U_i(\xi)-V_i(\xi)\bigr)\in L^1((-\infty,0]).
\end{align*}
The consumer equation can be written as
\begin{align*}
V_i''
-\frac{s}{D_i}V_i'
+
a_i(\xi)V_i
=0.
\end{align*}
Since $V_i\not\equiv0$ and $V_i(\xi)\to0$ as $\xi\to-\infty$,
Lemma~\ref{lem:degenerate-leading-edge}, applied with $\beta=s/D_i$, yields
\begin{align*}
V_i(\xi)
=
A_i e^{(s/D_i)\xi}(1+o(1)),
\end{align*}
and
\begin{align*}
V_i'(\xi)
=
\frac{s}{D_i}
A_i e^{(s/D_i)\xi}(1+o(1)),
\qquad
\xi\to-\infty,
\end{align*}
for some constant $A_i>0$.  Therefore,
\begin{equation}\label{eq:q-minus-infinity}
\lim_{\xi\to-\infty}q_i(\xi)
=
\frac{s}{D_i}.
\end{equation}

Since \eqref{eq:wave-boundary-conditions} holds, we have
\begin{align*}
\tilde{a}_i(\xi)
:=
\frac{e_i}{D_i}
V_i(\xi)\bigl(U_i(\xi)-V_i(\xi)\bigr)
\longrightarrow0\qquad\text{as}\qquad \xi\to+\infty.
\end{align*}
Let
\begin{align*}
W_i(\xi):=V_i'(\xi),
\qquad
\beta_i:=\frac{s}{D_i}>0.
\end{align*}
The consumer equation becomes
\begin{equation}\label{eq:Wi-linear-equation}
W_i'-\beta_iW_i=-\tilde{a}_i.
\end{equation}
Multiplying by $e^{-\beta_i\xi}$ gives
\begin{align*}
\left(
e^{-\beta_i\xi}W_i(\xi)
\right)'
=
-e^{-\beta_i\xi} \tilde{a}_i(\xi).
\end{align*}
Since $\tilde{a}_i$ is bounded for sufficiently large $\xi$,
the function on the right-hand side is integrable on every interval
$[R,+\infty)$.  Hence the limit
\begin{align*}
L_i
:=
\lim_{\xi\to+\infty}
e^{-\beta_i\xi}W_i(\xi)
\end{align*}
exists. We claim that $L_i=0$.  Otherwise,
\begin{align*}
W_i(\xi)
=
e^{\beta_i\xi}(L_i+o(1)),
\qquad
\xi\to+\infty.
\end{align*}
For sufficiently large $\xi$, the derivative $W_i=V_i'$ would then
have a fixed sign and satisfy
\begin{align*}
|V_i'(\xi)|
\geq
\frac{|L_i|}{2}e^{\beta_i\xi}.
\end{align*}
Integrating this inequality would imply that $V_i(\xi)$ is unbounded
as $\xi\to+\infty$, contradicting
\begin{align*}
V_i(\xi)\longrightarrow u_i^*.
\end{align*}
Thus $L_i=0$.

Integrating \eqref{eq:Wi-linear-equation} from $\xi$ to $+\infty$, we
obtain
\begin{align*}
V_i'(\xi)
=
\int_{\xi}^{+\infty}
e^{-\beta_i(t-\xi)}
\tilde{a}_i(t)\,dt.
\end{align*}
Consequently,
\begin{align*}
\begin{aligned}
|V_i'(\xi)|
\leq
\sup_{t\geq\xi}|\tilde{a}_i(t)|
\int_{\xi}^{+\infty}
e^{-\beta_i(t-\xi)}\,dt=
\frac{1}{\beta_i}
\sup_{t\geq\xi}|\tilde{a}_i(t)|.
\end{aligned}
\end{align*}
Since $\tilde{a}_i(t)\to0$, it follows that
\begin{align*}
V_i'(\xi)\longrightarrow0
\qquad\text{as}\qquad \xi\to+\infty.
\end{align*}
Moreover,
\begin{align*}
V_i(\xi)\longrightarrow u_i^*>0.
\end{align*}
Therefore,
\begin{equation}\label{eq:q-plus-infinity}
\lim_{\xi\to+\infty}q_i(\xi)
=
0.
\end{equation}

Finally, by \eqref{eq:q-minus-infinity} and \eqref{eq:q-plus-infinity},
\begin{align*}
\lim_{\xi\to-\infty}q_i(\xi)
=
\frac{s}{D_i}
>
\frac{s}{2D_i}
>
0
=
\lim_{\xi\to+\infty}q_i(\xi).
\end{align*}
Define
\begin{align*}
\xi_i
:=
\sup
\left\{
\xi\in\mathbb R:
q_i(\xi)\geq\frac{s}{2D_i}
\right\}.
\end{align*}
The preceding limits imply that $\xi_i$ is finite.  By continuity,
\begin{align*}
q_i(\xi_i)=\frac{s}{2D_i}.
\end{align*}
Moreover, for every sufficiently small $h>0$,
\begin{align*}
q_i(\xi_i+h)
<
\frac{s}{2D_i}
=
q_i(\xi_i).
\end{align*}
Since $q_i$ is differentiable, we obtain
\begin{align*}
q_i'(\xi_i)\leq0.
\end{align*}

Evaluating \eqref{eq:consumer-riccati} at $\xi=\xi_i$, we find
\begin{align*}
\begin{aligned}
0\geq
q_i'(\xi_i)=
\frac{s}{D_i}
\frac{s}{2D_i}
-
\left(
\frac{s}{2D_i}
\right)^2
-
\frac{e_i}{D_i}
\bigl(
U_i(\xi_i)-V_i(\xi_i)
\bigr)=
\frac{s^2}{4D_i^2}
-
\frac{e_i}{D_i}
\bigl(
U_i(\xi_i)-V_i(\xi_i)
\bigr).
\end{aligned}
\end{align*}
Hence
\begin{align*}
\frac{s^2}{4D_i}
\leq
e_i
\bigl(
U_i(\xi_i)-V_i(\xi_i)
\bigr).
\end{align*}
In particular, the right-hand side is positive.  Notice that no global
assumption of the form $U_i\geq V_i$ is required.  At the crossing
point $\xi_i$, positivity of $U_i-V_i$ is forced by the Riccati
equation itself.

Using \eqref{eq:Ui-less-than-one} and $V_i(\xi_i)>0$, we obtain
\begin{align*}
U_i(\xi_i)-V_i(\xi_i)<1.
\end{align*}
Therefore,
\begin{align*}
\frac{s^2}{4D_i}<e_i,\qquad\text{and hence}\qquad s<2\sqrt{D_i e_i}.
\end{align*}
Since this inequality holds for every $i\in\N$, we conclude that
\begin{align*}
s
<
\min_{i\in\N}2\sqrt{D_i e_i}
=
s_{\mathrm C}^{\mathrm{nec}}.
\end{align*}
This completes the proof of Theorem~\ref{thm:upper-speed-nonexistence}.
\end{proof}

\section{An Explicit Wave Beyond the Constructive Threshold}
\label{sec:explicit-wave}

The existence result in Theorem~\ref{thm:existence} guarantees positive traveling waves for
\[
s_{\mathrm P}<s
<
s_{\mathrm C}^{\mathrm{ex}}.
\]
On the other hand, Theorem~\ref{thm:upper-speed-nonexistence} shows that every strictly positive
extinction-to-coexistence wave must satisfy
\[
s
<
s_{\mathrm C}^{\mathrm{nec}}.
\]
The purpose of this section is to show that the constructive threshold
$s_{\mathrm C}^{\mathrm{ex}}$ is not an intrinsic upper barrier. More
precisely, we exhibit a fixed two-species system admitting an explicit
traveling wave with speed strictly inside the intermediate regime
\begin{align*}
\left[
s_{\mathrm C}^{\mathrm{ex}},
s_{\mathrm C}^{\mathrm{nec}}
\right).
\end{align*}

Let us consider system \eqref{eq:pc-system} with $N=2$ and coefficients
\[
d_1=\frac{7}{120},
\qquad
d_2=\frac{1}{120},
\qquad
D_1=D_2=\frac16,
\]
\[
r_1=\frac{11}{10},
\qquad
r_2=\frac{13}{10},
\qquad
e_1=8,
\qquad
e_2=5,
\]
and
\[
K=
\begin{pmatrix}
\displaystyle \frac{28}{33}
&
\displaystyle \frac{25}{264}
\\[7pt]
\displaystyle \frac{58}{195}
&
\displaystyle \frac{5}{78}
\end{pmatrix}.
\]
We observe that system \eqref{eq:pc-system} admits a strictly positive traveling-wave
solution
\[
(u,v)(x,t)
=
(U,V)(\xi), \qquad \xi=x+st
\]
with speed
\[
s=s_{\mathrm{exact}}
=
\frac23,
\]
where
\[
U_1(\xi)
=
\frac14\bigl(1+\tanh \xi\bigr),
\qquad
V_1(\xi)
=
\frac18\bigl(1+\tanh \xi\bigr)^2,
\]
and
\[
U_2(\xi)
=
\frac25\bigl(1+\tanh \xi\bigr),
\qquad
V_2(\xi)
=
\frac15\bigl(1+\tanh \xi\bigr)^2.
\]
Let us prove Proposition~\ref{prop} as follows.
\begin{proof}[Proof of Proposition~\ref{prop}]
First, the row sums of $K$ are
\begin{align*}
\frac{28}{33}
+
\frac{25}{264}
=
\frac{83}{88}\qquad\text{and}\qquad\frac{58}{195}
+
\frac{5}{78}
=
\frac{47}{130}.
\end{align*}
Therefore,
\[
\|K\|_{\infty}
=
\max
\left\{
\frac{83}{88},
\frac{47}{130}
\right\}
=
\frac{83}{88}
<
1.
\]

In particular, the weak-interaction condition is satisfied. The corresponding
quantities
\[
\sigma_i
=
1-\sum_{j=1}^2\kappa_{ij}
\]
are given by
\begin{align*}
\sigma_1
=
1-\frac{83}{88}
=
\frac{5}{88}\qquad\text{and}\qquad\sigma_2
=
1-\frac{47}{130}
=
\frac{83}{130}.
\end{align*}

A straightforward calculation reveals that $(U_1,U_2,V_1,V_2,s_{\mathrm{exact}})$ is a positive solution of system~\eqref{eq:traveling-wave-system}. Moreover,
\begin{align*}
\lim_{\xi\to-\infty}(U,V)(\xi)
=
(0,0,0,0) \quad\text{and}\quad 
\lim_{\xi\to+\infty}U(\xi)
=
\lim_{\xi\to+\infty}V(\xi)
=
(I+K)^{-1}\begin{pmatrix}
1\\[3pt]
1
\end{pmatrix}.
\end{align*}
Finally, a direct computation reveals that
\begin{align*}
s_{\mathrm P}
=
2\sqrt{\frac{77}{1200}}<s_{\mathrm C}^{\mathrm{ex}}
=
2\sqrt{\frac{5}{66}}<s_{\mathrm{exact}}
=
\frac23<s_{\mathrm C}^{\mathrm{nec}}
=
2\sqrt{\frac56}.
\end{align*}
This completes the proof.
\end{proof}

\appendix
\section{Coexistence equilibrium} \label{CE}
\begin{lemma}[Unique positive coexistence equilibrium]
\label{lem:unique-positive-equilibrium}
Assume that $K\geq 0$ and
\begin{align*}
\Theta:=\|K\|_{\infty}<1.
\end{align*}
Then system~\eqref{eq:pc-system} admits a unique strictly positive
spatially homogeneous equilibrium
\begin{align*}
E^*=(u^*,v^*),
\quad
v^*=u^*,
\quad
u^*=(I+K)^{-1}\mathbf{1},\quad\text{with}\quad \mathbf{1}:=(1,\ldots,1)^{\mathsf T}.
\end{align*}
Moreover,
\begin{align*}
1-\Theta\leq u_i^*\leq 1,
\qquad i\in\N.
\end{align*}
\end{lemma}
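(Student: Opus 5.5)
Spatially homogeneous equilibria of \eqref{eq:pc-system} are the zeros of the reaction terms. We will first reduce strict positivity to a linear system, then invert $I+K$ by a Neumann series, and finally read off the bounds from the componentwise equations.

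\textbf{Reduction.} Let $(u,v)$ be a strictly positive equilibrium. The consumer equation gives $e_iv_i(u_i-v_i)=0$. Since $v_i>0$, this forces $v_i=u_i$. The plant equation gives $r_iu_i(1-u_i-\kappa_{ii}v_i-\sum_{j\ne i}\kappa_{ij}u_j)=0$. Since $u_i>0$, and after substituting $v_i=u_i$, this becomes
\[
u_i+\sum_{j\in\N}\kappa_{ij}u_j=1,\qquad i\in\N,
\]
that is, $(I+K)u=\mathbf 1$. Conversely, any strictly positive solution $u$ of this system yields the equilibrium $(u,u)$. The problem is therefore to show that $(I+K)u=\mathbf 1$ has exactly one solution and that this solution is strictly positive.

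\textbf{Invertibility and positivity.} Because $\|K\|_\infty=\Theta<1$ in the induced max-row-sum norm, the Neumann series $(I+K)^{-1}=\sum_{n\ge0}(-K)^n$ converges. Hence $I+K$ is invertible, and $u^*=(I+K)^{-1}\mathbf 1$ is the unique solution of the linear system, so uniqueness of the equilibrium follows. The alternating series does not show positivity directly. Instead, we argue as follows. Write $u_i^*=1-\sum_j\kappa_{ij}u_j^*$, and let $M:=\max_j|u_j^*|$, attained at some index $k$. Then
\[
M=|u_k^*|\le 1+\sum_j\kappa_{kj}M\le 1+\Theta M,
\]
so $M\le 1/(1-\Theta)$.

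For the sharp bounds we use the maximum and minimum indices. First take $k$ with $u_k^*=\max_j u_j^*$. If $u_k^*\le0$, then every $u_j^*\le0$, which gives $u_k^*=1-\sum_j\kappa_{kj}u_j^*\ge1$, a contradiction. Hence $\max_j u_j^*>0$.

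Next take $m$ with $u_m^*=\min_j u_j^*$, and suppose $u_m^*<0$. Then
\[
u_m^*=1-\sum_j\kappa_{mj}u_j^*\ge 1-\Theta\max_j u_j^*.
\]
Also, for the maximizing index $k$,
\[
\max_j u_j^*=1-\sum_j\kappa_{kj}u_j^*\le 1-\Theta u_m^*,
\]
where we use $\kappa_{kj}\ge0$, $u_j^*\ge u_m^*$, $u_m^*<0$, and $\sum_j\kappa_{kj}\le\Theta$. Combining the two inequalities gives
\[
u_m^*\ge 1-\Theta(1-\Theta u_m^*)=1-\Theta+\Theta^2u_m^*.
\]
Hence $u_m^*(1-\Theta^2)\ge1-\Theta>0$, so $u_m^*>0$, which is a contradiction. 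Therefore $u^*\ge0$ componentwise.

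Once $u^*\ge0$ is known, nonnegativity of the terms gives $u_i^*=1-\sum_j\kappa_{ij}u_j^*\le1$ for every $i$. Feeding this back in, $u_i^*\ge1-\sum_j\kappa_{ij}\cdot1\ge1-\Theta>0$. This yields both strict positivity and the stated bounds $1-\Theta\le u_i^*\le1$.

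The main obstacle is the positivity step, because $(I+K)^{-1}$ is not a nonnegative matrix in general. The max/min-index bootstrap above is designed to avoid that issue. If that argument has a gap, the fallback is a fixed-point approach: the map $T(u)=\mathbf 1-Ku$ sends the order interval $[(1-\Theta)\mathbf 1,\mathbf 1]$ into itself, since $0\le Ku\le\Theta\mathbf 1$ for $u$ in that interval. $T$ is also a contraction in $\|\cdot\|_\infty$ with constant $\Theta$. Its unique fixed point therefore lies in the interval, and by uniqueness of the linear solution it coincides with $u^*$. This route is cleaner, so we will actually present that version.
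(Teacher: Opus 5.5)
Your proposal is correct. The version you say you will present, the $\Theta$-contraction $T(z)=\mathbf 1-Kz$ in $\|\cdot\|_\infty$ mapping an order box into $[(1-\Theta)\mathbf 1,\mathbf 1]$, is essentially the paper's proof; the only difference is that the paper works on $[0,1]^N$ and gets uniqueness from the a priori bound $0<u_i\le 1$ for positive solutions of $(I+K)u=\mathbf 1$, whereas you get it from invertibility of $I+K$ via the Neumann series. Your max/min-index bootstrap is also valid as written, so switching to the fixed-point argument is a matter of taste rather than necessity.
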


\begin{proof}
A spatially homogeneous positive equilibrium $(u^*,v^*)$ satisfies
\begin{align*}
0=e_i v_i^*(u_i^*-v_i^*),
\qquad i\in\N.
\end{align*}
Hence $v^*=u^*$, and the plant equations become
\begin{align*}
u_i^*+\sum_{j\in\N}\kappa_{ij}u_j^*=1,
\qquad i\in\N,
\end{align*}
or equivalently,
\begin{align*}
(I+K)u^*=\mathbf{1}.
\end{align*}

Define
\begin{align*}
T(z):=\mathbf{1}-Kz,
\qquad z\in[0,1]^N.
\end{align*}
Since $K\geq0$ and $\Theta=\|K\|_\infty<1$, we have
\begin{align*}
1-\Theta\leq T_i(z)\leq1,
\qquad z\in[0,1]^N,\quad i\in\N.
\end{align*}
Thus
\begin{align*}
T([0,1]^N)\subset[1-\Theta,1]^N\subset[0,1]^N.
\end{align*}
Moreover,
\begin{align*}
\|T(z)-T(\widetilde z)\|_\infty
\leq
\Theta\|z-\widetilde z\|_\infty,
\qquad z,\widetilde z\in[0,1]^N.
\end{align*}
Therefore, by the Banach fixed-point theorem, $T$ has a unique fixed point
\begin{align*}
u^*\in[1-\Theta,1]^N.
\end{align*}
Setting $v^*=u^*$ gives a strictly positive spatially homogeneous
equilibrium of system~\eqref{eq:pc-system}.

Conversely, any strictly positive spatially homogeneous equilibrium
must satisfy $(I+K)u=\mathbf{1}$.  Since $K\geq0$, this identity implies $0<u_i\leq1$ for $i\in\N$, and hence $u\in[0,1]^N$.  By the uniqueness of the fixed point of $T$,
we obtain $u=u^*$ and $v=v^*$.  Finally, since
$\|K\|_\infty<1$, the matrix $I+K$ is invertible, and therefore
\begin{align*}
u^*=(I+K)^{-1}\mathbf{1},
\qquad
v^*=u^*.
\end{align*}
If, in addition, $\kappa_{ii}>0$ for every $i\in\N$, then $(1+\kappa_{ii})u_i^*\leq1$,
so that $u_i^*<1$ for every $i\in\N$.  Hence, $E^*=(u^*,v^*)\in(0,1)^{2N}$.
\end{proof}

\section{Sturm comparison lemma}\label{S thm}
We record the Sturm comparison lemma. A proof is included
for completeness.

\begin{lemma}\label{lem:sturm-comparison}
Let $R\in\mathbb{R}$, $\delta>0$, and let
\begin{align*}
    a\in C(( -\infty,R])
\end{align*}
satisfy
\begin{align*}
    a(\xi)\geq \delta,
    \qquad \xi\leq R.
\end{align*}
Then the equation
\begin{equation}
\label{eq:sturm-general}
    z''+a(\xi)z=0
\end{equation}
admits no strictly positive or strictly negative solution on $(-\infty,R]$.

More precisely, every nontrivial solution of \eqref{eq:sturm-general}
has a zero in each interval
\begin{align*}
    \left[\ell,\ell+\frac{\pi}{\sqrt{\delta}}\right]
    \subset(-\infty,R].
\end{align*}
\end{lemma}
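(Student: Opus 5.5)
We prove the quantitative claim first, since it implies the first: a nontrivial solution with a zero in every interval of length $\pi/\sqrt{\delta}$ inside $(-\infty,R]$ cannot keep a fixed sign on $(-\infty,R]$.

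Fix $\ell$ with $\ell+\pi/\sqrt{\delta}\le R$ and set $\omega:=\sqrt{\delta}$, $b:=\ell+\pi/\omega$. Compare $z$ with the comparison function
\begin{align*}
w(\xi):=\sin\bigl(\omega(\xi-\ell)\bigr),
\end{align*}
which solves $w''+\delta w=0$, is positive on $(\ell,b)$, and vanishes at both endpoints. Suppose, to the contrary, that $z$ has no zero in $[\ell,b]$. Replacing $z$ by $-z$ if necessary, we may assume $z>0$ on $[\ell,b]$.

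Consider the Wronskian-type quantity $\mathcal W:=z'w-zw'$. From the two equations,
\begin{align*}
\mathcal W'=z''w-zw''=(\delta-a(\xi))\,zw\le0 \qquad\text{on }[\ell,b],
\end{align*}
because $a\ge\delta$ and $z,w\ge0$ there. At the endpoints, $w(\ell)=w(b)=0$, $w'(\ell)=\omega>0$ and $w'(b)=-\omega<0$. Hence
\begin{align*}
\mathcal W(\ell)=-\omega z(\ell)<0,\qquad \mathcal W(b)=\omega z(b)>0.
\end{align*}
So $\mathcal W(b)>\mathcal W(\ell)$, which contradicts the monotonicity $\mathcal W'\le0$. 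Therefore $z$ vanishes somewhere in $[\ell,b]$.

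Now let $z$ be any nontrivial solution on $(-\infty,R]$, and apply the above with $\ell=R-\pi/\omega$. This gives a zero of $z$, so $z$ is neither strictly positive nor strictly negative on $(-\infty,R]$.

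There is no real obstacle in this argument. The only point needing care is the sign bookkeeping at the endpoints, together with the observation that the case $z<0$ reduces to $z>0$ by the linearity of the equation. Continuity of $a$ is enough for $z\in C^2$, so the Wronskian computation is justified.
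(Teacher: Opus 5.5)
Your proof is correct and is essentially the paper's argument: the paper integrates the identity $z''\phi-\phi''z+(a-\delta)z\phi=0$ over $[\ell,\ell+\pi/\sqrt{\delta}]$ and obtains $\sqrt{\delta}\,(z(\ell)+z(\ell+L))+\int(a-\delta)z\phi=0$, which is exactly your monotonicity of $\mathcal W=z'w-zw'$ written in integrated form. Your explicit choice $\ell=R-\pi/\sqrt{\delta}$ to get the first claim from the quantitative one is a step the paper leaves implicit, and it is fine.
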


\begin{proof}
Fix $\ell\in\mathbb{R}$ such that
\begin{align*}
    \ell+\frac{\pi}{\sqrt{\delta}}\leq R,
\end{align*}
and set
\begin{align*}
    L:=\frac{\pi}{\sqrt{\delta}},
    \qquad
    \phi(\xi):=\sin\bigl(\sqrt{\delta}(\xi-\ell)\bigr),
    \qquad
    \xi\in[\ell,\ell+L].
\end{align*}
Then
\begin{align*}
    \phi''+\delta\phi=0,
\end{align*}
and
\begin{align*}
    \phi(\ell)=\phi(\ell+L)=0,
    \qquad
    \phi(\xi)>0
    \quad\text{for }\xi\in(\ell,\ell+L).
\end{align*}
Moreover,
\begin{align*}
    \phi'(\ell)=\sqrt{\delta},
    \qquad
    \phi'(\ell+L)=-\sqrt{\delta}.
\end{align*}

Suppose, by contradiction, that a nontrivial solution $z$ of
\eqref{eq:sturm-general} has no zero in $[\ell,\ell+L]$.
Replacing $z$ by $-z$ if necessary, we may assume that
\begin{align*}
    z(\xi)>0,
    \qquad
    \xi\in[\ell,\ell+L].
\end{align*}
Multiplying \eqref{eq:sturm-general} by $\phi$, multiplying
\begin{align*}
    \phi''+\delta\phi=0
\end{align*}
by $z$, and subtracting the resulting identities, we obtain
\begin{align*}
    z''\phi-\phi''z+(a-\delta)z\phi=0.
\end{align*}
Integrating over $[\ell,\ell+L]$ yields
\begin{align*}
    0
    &=
    \left[z'\phi-z\phi'\right]_{\ell}^{\ell+L}
    +\int_{\ell}^{\ell+L}
      (a(\xi)-\delta)z(\xi)\phi(\xi)\,d\xi
    \\
    &=
    \sqrt{\delta}\bigl(z(\ell)+z(\ell+L)\bigr)
    +\int_{\ell}^{\ell+L}
      (a(\xi)-\delta)z(\xi)\phi(\xi)\,d\xi.
\end{align*}
The first term is strictly positive, whereas the second term is
nonnegative. This is a contradiction. Hence $z$ must vanish in
$[\ell,\ell+L]$.
\end{proof}

\section{Acknowledgement}
Chiun-Chuan Chen is supported by the National Science and Technology Council, Taiwan (Grant Number 114-2115-M-002-004-MY3) and the National Center for Theoretical Sciences, Taiwan (NCTS). Ting-Yang Hsiao is supported by the European Union ERC CONSOLIDATOR GRANT 2023 GUnDHam, Project Number: 101124921. Views and opinions expressed are however those of the authors only and do not necessarily reflect those of the European Union or the European Research Council. Neither the European Union nor the granting authority can be held responsible for them.

\begin{center}
\bibliographystyle{alpha}
\bibliography{Bibjournal.bib}
\end{center}

\end{document}